\numberwithin{equation}{section}
\theoremstyle{definition}
\newtheorem{thm}{Theorem}[section]
\newtheorem{lemma}[thm]{Lemma}
\newtheorem{cor}[thm]{Corollary}
\newtheorem{prop}[thm]{Proposition}
\theoremstyle{definition}
\newtheorem{definition}[thm]{Definition}
\newtheorem{remark}[thm]{Remark}
\DeclareMathOperator*{\indlim}{ind\,lim}
\title{Fourier characterizations and non-triviality of Gelfand-Shilov spaces, with applications to Toeplitz operators}
\author{Albin Petersson}
\address{Department of Mathematics,
Linn{\ae}us University, V{\"a}xj{\"o}, Sweden}
\email{albin.petersson@lnu.se}
\date{\today}
\begin{document}
\begin{abstract}
    We examine properties of Gelfand-Shilov spaces $S_s$, $S^\sigma$, $S_s^\sigma$,  $\Sigma_s$, $\Sigma^\sigma$ and $\Sigma_s^\sigma$. These are spaces of smooth functions where the functions or their Fourier transforms admit sub-exponential decay. It is determined that $\Sigma_s^\sigma$ is nontrivial if and only if $s+\sigma > 1$. We find growth estimates on functions and their Fourier transforms in the one-parameter spaces, and we obtain characterizations in terms of estimates of short-time Fourier transforms for these spaces and their duals. Additionally, we determine conditions on the symbols of Toeplitz operators under which the operators are continuous on one-parameter spaces.
\end{abstract}
\maketitle

\section{Introduction}\label{sec:intro}
The Gelfand-Shilov spaces were first introduced as a useful set of functions for the study of Cauchy problems in partial differential equations. These functions are convenient in this setting because of their smoothness, and because of the conditions of regularity imposed on them. For instance, some partial differential equations are ill-posed in the Schwartz space $\mathscr{S}$ or its dual $\mathscr{S}'$, the space of tempered distributions (see e.g. \cite[p.~160-163]{hormander} for notation), but are well-posed in suitable Gelfand-Shilov spaces. One such example is the Euler-Tricomi equation $D_t^2 f + t D_x^2 f = 0$, and another example may be found in \cite{lewy}. The fact that some partial differential equations are only well-posed in Gelfand-Shilov spaces exemplifies the need to determine properties of functions in those spaces. Gelfand-Shilov spaces can also be useful in the study of pseudo-differential operators \cite{cappiello}, which in turn have uses in, for instance, quantum theory \cite{quantum} and signal processing \cite{signal}.

\begin{comment}
Spaces of smooth functions have many interesting properties. The prototypical example of such a space is the Schwartz space, $\mathscr{S}$, which consists of functions that, along with their Fourier transforms and derivatives, decay faster than rational functions. 
\end{comment}
The Gelfand-Shilov spaces $S_s^\sigma$, $S_s$ and $S^\sigma$ of Roumieu type (cf. \cite{eijndhoven,gel,chung}) and $\Sigma_s^\sigma$, $\Sigma_s$ and $\Sigma^\sigma$ of Beurling type (cf. \cite{pilip}) can be considered as refinements of the Schwartz space $\mathscr{S}$, where we impose analyticity-like smoothness conditions. The strength of these conditions depend on the parameters $s$ and $\sigma$. The smaller $s$ is the faster the functions must vanish at infinity, and smaller $\sigma$ impose stronger conditions on the growth of the derivatives (meaning the Fourier transform vanishes faster). In the one-parameter spaces, functions have sub-exponential decay and their Fourier transforms tend to zero faster than the reciprocal of any polynomial, or vice versa. In the two-parameter spaces, both the functions and their Fourier transforms have sub-exponential decay. If $s$ and $\sigma$ are sufficiently small, the only function found in $S_s^\sigma$ or $\Sigma_s^\sigma$ is $f(x) \equiv 0$, and the spaces are considered trivial. There are more general Gelfand-Shilov spaces, such as the $S_{M_p}^{N_p}$-spaces whose properties are explored in \cite{chung}, for instance. 

In this paper, we are mostly interested in discussing the properties of the one-parameter spaces $S_s$ and $S^\sigma$, their duals $(S_s)'$ and $(S^\sigma)'$, as well as the corresponding spaces $\Sigma_s$ and $\Sigma^\sigma$ and their duals. More specifically, we establish growth estimates on elements in these spaces and their Fourier transforms. Additionally, we find estimates involving the short-time Fourier transform which provides an alternate characterization of Gelfand-Shilov spaces. Such estimates exist for the two-parameter spaces (cf. \cite{grochenig}) and here we extend characterizations of this type to one-parameter spaces. We find that the short-time Fourier transform admits sub-exponential decay in one parameter, and tends to zero faster than reciprocals of polynomials in the other. Corresponding estimates are found for the duals of one-parameter spaces as well. We also examine Toeplitz operators on these one-parameter spaces, where the symbol $a(x,\xi)$ of the operator lies in different one-parameter spaces in each variable. We find conditions such that the Toeplitz operator is continuous on $S_s$, $S^\sigma$ and their respective duals.

We also determine when the two-parameter spaces are nontrivial. These results are well-known for $S_s^\sigma$-spaces, but for the $\Sigma_s^\sigma$-spaces, we find that the space is nontrivial if and only if $s+\sigma > 1$, as opposed to the condition $s+\sigma \geq 1$, $(s,\sigma)\neq (\frac12,\frac12)$ often cited in other works (cf. \cite{toft2}).  This result, which was initially suggested by Andreas Debrouwere, directly contradicts versions of this result in previous works.

The paper is structured as follows. In Section \ref{sec:prelim}, we introduce notations, definitions and preliminary propositions regarding Gelfand-Shilov spaces necessary to obtain results in subsequent sections. These preliminary results can either be found in \cite{chung,eijndhoven,gel} or are simple enough to be left as an exercise for the reader. In Section \ref{sec:nontriv}, we determine for which $s$ and $\sigma$ the space $\Sigma_s^\sigma$ is nontrivial. In Section \ref{sec:stft}, we obtain growth estimates for the short-time Fourier transform of functions in $S_s$, $S^\sigma$ and $\Sigma_s$, $\Sigma^\sigma$. In Section \ref{sec:dual}, we show how these results can be used to characterize the duals of these one-parameter spaces via the short-time Fourier transform as well. Lastly, in Section \ref{sec:top}, we find conditions on the symbol of Toeplitz operators so that the operator is continuous on one-parameter spaces and their duals.

% -------------------------------------------------------------------
%                            DEFINITIONS
% -------------------------------------------------------------------
\section{Preliminaries}\label{sec:prelim}
% ------------------------------------
%   Def: GS spaces
% ------------------------------------
We begin by defining the spaces we will devote the most attention to in this paper. These are the so-called Gelfand-Shilov spaces. There is a clear and intuitive correspondence between the spaces of Roumieu type and those of Beurling type and the order in which the definitions are listed is meant to highlight this correspondence.
\begin{definition}
 Suppose $s,\sigma>0$. 
 \begin{enumerate}[label=(\roman*)]
     \item $S_s(\mathbb{R}^n)$ consists of all $f\in C^\infty(\mathbb{R}^n)$ for which there is an $h>0$ such that
     \begin{equation}\label{Sscond}
     \sup_{x\in \mathbb{R}^n} | x^\alpha D^\beta f(x) | \leq C_\beta h^{|\alpha|} \alpha!^s, \quad \forall \alpha,\beta\in\mathbb{N}^n,
     \end{equation}
      where $C_\beta$ is a constant depending only on $\beta$. 
      
      \item $\Sigma_s(\mathbb{R}^n)$ consists of all $f\in C^\infty(\mathbb{R}^n)$ such that \eqref{Sscond} holds for every $h>0$, where $C_\beta = C_{h,\beta}$ depends on $h$ and $\beta$.
      
     \item $S^\sigma(\mathbb{R}^n)$ consists of all $f\in C^\infty(\mathbb{R}^n)$ for which
     \begin{equation}\label{Ssigmacond}
     \sup_{x\in \mathbb{R}^n} | x^\alpha D^\beta f(x) | \leq C_\alpha h^{|\beta|} \beta!^\sigma, \quad \forall \alpha,\beta\in\mathbb{N}^n,
     \end{equation}
     holds for some $h>0$, where $C_\alpha$ is a constant depending only on $\alpha$.
     
     \item $\Sigma^\sigma(\mathbb{R}^n)$ consists of all $f\in C^\infty(\mathbb{R}^n)$ such that \eqref{Ssigmacond} holds for every $h>0$, where $C_\alpha = C_{h,\alpha}$ depends on $h$ and $\alpha$.
     
     \item $S_s^\sigma(\mathbb{R}^n)$ consists of all $f\in C^\infty(\mathbb{R}^n)$ for which there are constants $h>0$ and $C>0$ such that
     \begin{equation}\label{GScond}
     \sup_{x \in \mathbb{R}^n}|x^\alpha D^\beta f(x) | \leq C h^{|\alpha + \beta|} \alpha!^s \beta!^\sigma, \quad \forall \alpha,\beta\in\mathbb{N}^n.
     \end{equation}
     
     \item $\Sigma_s^\sigma(\mathbb{R}^n)$ consists of all $f\in C^\infty(\mathbb{R}^n)$ such that \eqref{GScond} holds for all $h>0$, where $C=C_h$ depends only on $h$.
 \end{enumerate}
\end{definition}
Trivially, we see that $S_s^\sigma \subseteq S_s\cap S^\sigma$, $\Sigma_s^\sigma \subseteq \Sigma_s \cap \Sigma^\sigma$, and $\Sigma_s^\sigma \subseteq S_s^\sigma$ for all such $s$ and $\sigma$. In fact, we have $S_s^\sigma = S_s\cap S^\sigma$ (cf. \cite{eijndhoven}) and $\Sigma_s^\sigma=\Sigma_s\cap\Sigma^\sigma$ (this is a well-known result that follows by analogous arguments, but an explicit proof can be found in \cite{albin} for instance).
% ------------------------------------
%   Def: Fourier transform
% ------------------------------------
\begin{definition}
 Let $\mathscr{F}$ denote the Fourier transform given by
\begin{equation*}
    (\mathscr{F}f)(\xi) = \hat{f}(\xi) = \frac{1}{(2\pi)^{n/2}} \int_{\mathbb{R}^n} f(x) e^{- i \langle x,\xi \rangle} \, dx,
\end{equation*}
and let $\mathscr{F}^{-1}$ be the corresponding inverse Fourier transform
\begin{equation*}
    (\mathscr{F}^{-1}f)(x) = \frac{1}{(2\pi)^{n/2}} \int_{\mathbb{R}^n} f(\xi) e^{i \langle x,\xi \rangle} \, d\xi.
\end{equation*}
If $f$ is a generalized function, then $\mathscr{F}$ denotes the adjoint operator of the Fourier transform defined above.
\end{definition}
Here we list some basic properties of Gelfand-Shilov spaces in the form of two propositions. The first proposition establishes sub-exponential decay of derivatives in Gelfand-Shilov spaces, and the second establishes how Fourier transforms work in Gelfand-Shilov spaces.  For both of the following two propositions, (a) can be found in \cite{eijndhoven,gel,chung}, and (b) follows by analogous arguments.
% ------------------------------------
%   Prop: S_s alt char
% ------------------------------------
\begin{prop} \label{prop:Ssalt}
Suppose $s>0$ and $f\in C^\infty(\mathbb{R}^n)$. Then
\begin{enumerate}[label=(\alph*)]
    \item $f\in S_s(\mathbb{R}^n)$ if and only if there are constants $C_\beta,r>0$ such that
    \begin{equation}\label{eq:Ssalt}
    |D^\beta f(x)| \leq C_\beta e^{- r |x|^{1/s}}
    \end{equation}
    for all multi-indices $\beta$;
    \item $f\in\Sigma_s(\mathbb{R}^n)$ if and only if for every $r>0$
\begin{equation}\label{eq:Ssigalt}
    |D^\beta f(x)| \leq C_{r,\beta} e^{- r |x|^{1/s}}
\end{equation}
holds for all multi-indices $\beta$, where $C_{r,\beta}>0$ depends only on $r$ and $\beta$.

\end{enumerate}
\end{prop}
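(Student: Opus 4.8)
The plan is to reduce both statements to a single scalar fact and then keep careful track of the constants. Observe that condition \eqref{Sscond} says precisely that, for each fixed multi-index $\beta$, the continuous function $g:=D^\beta f$ obeys $\sup_{x}|x^\alpha g(x)|\le C_\beta h^{|\alpha|}\alpha!^s$ for all $\alpha$, while \eqref{eq:Ssalt} says $|g(x)|\le C_\beta e^{-r|x|^{1/s}}$; the same reformulation applies to (b). So it suffices to prove, for $g\in C(\mathbb{R}^n)$, the equivalence
\[
\Big(\exists\,h>0:\ \sup_{x}|x^\alpha g(x)|\le Ch^{|\alpha|}\alpha!^s\ \ \forall\alpha\Big)\iff\Big(\exists\,r>0:\ |g(x)|\le C'e^{-r|x|^{1/s}}\Big),
\]
together with the quantitative dependence $h=h(r)\downarrow 0$ as $r\uparrow\infty$ and $r=r(h)\uparrow\infty$ as $h\downarrow 0$. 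Part (a) is then the ``$\exists$'' reading and part (b) the ``$\forall$'' reading of this equivalence.

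For the implication ``$\Leftarrow$'': from $|g(x)|\le C'e^{-r|x|^{1/s}}$ and $|x^\alpha|\le|x|^{|\alpha|}$ we get $|x^\alpha g(x)|\le C'\sup_{t\ge 0}t^{|\alpha|}e^{-rt^{1/s}}$. With the substitution $u=t^{1/s}$ and the elementary maximum $\sup_{u\ge 0}u^{p}e^{-ru}=(p/er)^{p}$, this supremum equals $(s/er)^{s|\alpha|}|\alpha|^{s|\alpha|}$; then the bounds $m^{m}\le e^{m}m!$ and $|\alpha|!\le n^{|\alpha|}\alpha!$ turn it into $\big((ns/r)^{s}\big)^{|\alpha|}\alpha!^{s}$. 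Hence \eqref{Sscond} holds with $h=(ns/r)^{s}$, which indeed tends to $0$ as $r\to\infty$.

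For the implication ``$\Rightarrow$'': expand $\big(|x_1|^2+\dots+|x_n|^2\big)^N=\sum_{|\alpha|=N}\binom{N}{\alpha}x^{2\alpha}$, apply \eqref{Sscond} to each term $x^{2\alpha}g(x)$, and use $(2\alpha)!\le 4^{|\alpha|}(\alpha!)^2\le 4^{N}(N!)^{2}$ together with $\sum_{|\alpha|=N}\binom{N}{\alpha}=n^{N}$ to obtain $|x|^{2N}|g(x)|\le C\,A^{N}(N!)^{2s}$ with $A:=n\,4^{s}h^{2}$, valid for every $N\ge 0$. Taking the infimum over $N$ — using $N!\le N^{N}$ and the near-optimal choice $N\approx e^{-1}(|x|^{2}/A)^{1/(2s)}$, rounded to an integer at the cost of harmless constant factors, and covering small $|x|$ by the trivial $N=0$ bound — yields $|g(x)|\le C'e^{-r|x|^{1/s}}$ with $r$ of order $A^{-1/(2s)}$, so $r\to\infty$ as $h\to 0$. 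Combining the two implications with this monotone dependence gives the scalar equivalence; reading it with ``$\exists$'' gives (a), and reading it with ``$\forall$'' — choosing, for a prescribed $h$, some $r$ with $(ns/r)^s\le h$, and conversely choosing $h$ small enough that $r(h)$ exceeds a prescribed $r$ — gives (b).

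I expect none of the individual estimates to be difficult; the two points that need attention are the optimization over $N$ in the ``$\Rightarrow$'' direction and the bookkeeping that makes the Beurling case (b) come out with the quantifier ``for every $r$'' correctly matched to ``for every $h$''.
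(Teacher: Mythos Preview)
Your argument is correct and complete in outline; the two implications you sketch are precisely the classical ones, and your bookkeeping of $h\leftrightarrow r$ is what makes the Beurling case (b) go through with the right quantifiers. The only steps that would need to be written out more carefully in a final version are the integer rounding of the near-optimal $N$ and the absorption of the resulting bounded factors, but these are indeed routine.

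As for comparison with the paper: the paper does not actually prove this proposition. It states the result as preliminary and writes that ``(a) can be found in \cite{eijndhoven,gel,chung}, and (b) follows by analogous arguments.'' Your proof is essentially the standard argument one finds in those references (in particular in Gel'fand--Shilov), so you have supplied exactly what the paper chose to omit. There is no alternative approach in the paper to contrast with yours.
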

% ------------------------------------
%   Prop: FT
% ------------------------------------
\begin{prop} \label{prop:FT}
Suppose $s,\sigma>0$. 
\begin{enumerate}[label=(\alph*)]
    \item If $s+\sigma\geq 1$, then $f\in S_s^\sigma(\mathbb{R}^n)$ if and only if $\hat{f}\in S_\sigma^s(\mathbb{R}^n)$. Moreover, $f\in S_s(\mathbb{R}^n)$ if and only if $\hat{f}\in S^s(\mathbb{R}^n)$.
    \item If $s+\sigma>1$, then $f\in\Sigma_s^\sigma(\mathbb{R}^n)$ if and only if $\hat{f}\in\Sigma_\sigma^s(\mathbb{R}^n)$. Moreover, $f\in \Sigma_s(\mathbb{R}^n)$ if and only if $\hat{f}\in \Sigma^s(\mathbb{R}^n)$.
\end{enumerate}
\end{prop}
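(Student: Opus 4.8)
The plan is to reduce the whole statement to the elementary way the Fourier transform intertwines the operators $f\mapsto x^\alpha f$ and $f\mapsto D^\beta f$. With the normalization fixed above one has $\xi^\alpha\widehat{g}=\widehat{D^\alpha g}$ and $D^\beta\widehat{g}=\widehat{(-x)^\beta g}$, hence $\xi^\alpha D^\beta\widehat{f}=(-1)^{|\beta|}\widehat{D^\alpha(x^\beta f)}$ and in particular
\begin{equation*}
\bigl|\xi^\alpha D^\beta\widehat{f}(\xi)\bigr|\le(2\pi)^{-n/2}\bigl\|D^\alpha(x^\beta f)\bigr\|_{L^1(\mathbb{R}^n)},\qquad\xi\in\mathbb{R}^n.
\end{equation*}
Expanding with the Leibniz rule, $D^\alpha(x^\beta f)=\sum_{\gamma\le\min(\alpha,\beta)}\binom{\alpha}{\gamma}(-i)^{|\gamma|}\frac{\beta!}{(\beta-\gamma)!}x^{\beta-\gamma}D^{\alpha-\gamma}f$, so everything comes down to estimating the finitely many norms $\|x^{\beta-\gamma}D^{\alpha-\gamma}f\|_{L^1}$ from the defining bounds on $f$; convergence of those integrals is guaranteed either by splitting off a fixed weight $(1+|x|)^{-n-1}$ or, more comfortably, by using the exponential decay supplied by Proposition~\ref{prop:Ssalt}.

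I would prove the one-parameter ("moreover") statements first. For $f\in S_s\Rightarrow\widehat{f}\in S^s$: Proposition~\ref{prop:Ssalt}(a) gives $|D^{\mu}f(x)|\le C_\mu e^{-r|x|^{1/s}}$, so the standard estimate $\int_{\mathbb{R}^n}|x|^{m}e^{-r|x|^{1/s}}\,dx\le BA^{m}(m+n)!^{s}$ yields $\|x^{\beta-\gamma}D^{\alpha-\gamma}f\|_{L^1}\le C_{\alpha-\gamma}BA^{|\beta-\gamma|}(|\beta-\gamma|+n)!^{s}$. Feeding this into the Leibniz sum and using $(|\beta-\gamma|+n)!^{s}\le C(2^{s}n^{s})^{|\beta|}\beta!^{s}$ and $\frac{\beta!}{(\beta-\gamma)!}\le|\beta|^{|\gamma|}\le|\beta|^{|\alpha|}\le C_{\alpha,\varepsilon}(1+\varepsilon)^{|\beta|}$, while all the $\alpha$-only factors are collected into a constant $C_\alpha$, produces $\|D^\alpha(x^\beta f)\|_{L^1}\le C_\alpha(h')^{|\beta|}\beta!^{s}$ with $h'$ independent of $\alpha,\beta$ — which is precisely \eqref{Ssigmacond} for $\widehat{f}\in S^s$. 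The dual implication $f\in S^\sigma\Rightarrow\widehat{f}\in S_\sigma$ is the analogous computation with the two families of indices exchanged: from \eqref{Ssigmacond} one gets $\|x^{\beta-\gamma}D^{\alpha-\gamma}f\|_{L^1}\le C^{*}_\beta h^{|\alpha-\gamma|}(\alpha-\gamma)!^{\sigma}$ (a $\beta$-dependent constant is now allowed, as in \eqref{Sscond}), and the remaining sum is handled by $\sum_{\gamma}\binom{\alpha}{\gamma}h^{|\alpha-\gamma|}(\alpha-\gamma)!^{\sigma}\le C(h'')^{|\alpha|}\alpha!^{\sigma}$, which follows from the identity $\sum_{\gamma}\frac{(1+|\alpha|)^{|\gamma|}}{\gamma!}=(e^{1+|\alpha|})^{n}$ once the exponential is absorbed into $h''$. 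The reverse implications follow by applying the two direct ones to $\widehat{f}$ in place of $f$, using $(\mathscr{F}^{2}f)(x)=f(-x)$ and the evident invariance of every space under $x\mapsto-x$. For the Beurling classes $\Sigma_s\leftrightarrow\Sigma^s$ and $\Sigma^\sigma\leftrightarrow\Sigma_\sigma$ one runs the identical estimates while carrying the "for every $h$" clause: to reach a prescribed target parameter $h'$ one first chooses the source parameter $h$ small enough that, say, $2^{s}n^{s}(1+\varepsilon)h\le h'$.

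The two-parameter statements are then immediate by intersection. Using the (cited) identities $S_s^\sigma=S_s\cap S^\sigma$ and $\Sigma_s^\sigma=\Sigma_s\cap\Sigma^\sigma$: if $f\in S_s^\sigma$ then $f\in S_s$ and $f\in S^\sigma$, so $\widehat{f}\in S^s\cap S_\sigma=S_\sigma^s$; the converse, and the Beurling case, are identical. The hypotheses $s+\sigma\ge1$, respectively $s+\sigma>1$, are precisely the conditions under which the spaces in question are nontrivial (Section~\ref{sec:nontriv} and the classical facts for $S_s^\sigma$), and that is the only role they play in the argument.

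The step I expect to cause the most friction is the constant-chasing in the direction $f\in S_s\Rightarrow\widehat{f}\in S^s$ and its Beurling analogue: the surviving constant must depend on $\alpha$ alone, which forces the Leibniz factor $\beta!/(\beta-\gamma)!\le|\beta|^{|\alpha|}$ to be absorbed, at the cost of an arbitrarily small $\varepsilon$, into $C_\alpha(1+\varepsilon)^{|\beta|}$; and in the Beurling setting one must take care to shrink, rather than enlarge, the source parameter in order to hit an arbitrary target parameter. The factorial inequalities and the $L^1$-convergence are routine.
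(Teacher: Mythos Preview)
Your argument is correct and is essentially the classical direct computation (Leibniz expansion of $D^\alpha(x^\beta f)$, $L^1$--$L^\infty$ bound for the Fourier transform, factorial bookkeeping) that one finds in the references the paper cites. Note, however, that the paper does \emph{not} supply its own proof of this proposition: it states it as a preliminary fact, attributes part~(a) to \cite{eijndhoven,gel,chung}, and remarks that~(b) follows by analogous arguments. So there is no ``paper's proof'' to compare against beyond the literature pointer; your write-up is a self-contained version of that standard argument, with the pleasant organizational twist of deducing the two-parameter statements from the one-parameter ones via the cited intersection identities $S_s^\sigma=S_s\cap S^\sigma$ and $\Sigma_s^\sigma=\Sigma_s\cap\Sigma^\sigma$.

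Two minor presentational remarks, neither affecting correctness. First, the bound $\sum_{\gamma\le\alpha}\binom{\alpha}{\gamma}h^{|\alpha-\gamma|}(\alpha-\gamma)!^{\sigma}\le(1+h)^{|\alpha|}\alpha!^{\sigma}$ follows immediately from $(\alpha-\gamma)!\le\alpha!$ and the multinomial identity, so the detour through $\sum_\gamma(1+|\alpha|)^{|\gamma|}/\gamma!=e^{n(1+|\alpha|)}$ is unnecessary. Second, your closing sentence slightly overstates the role of the hypotheses $s+\sigma\ge1$ (resp.\ $>1$): the equivalences $\mathscr{F}S_s^\sigma=S_\sigma^s$ etc.\ hold as set identities for all $s,\sigma>0$ by your computation; the nontriviality assumptions merely ensure the statement has content, and in the paper they also match the standing hypotheses under which the intersection identities are quoted.
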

We also include the following basic result.
% ------------------------------------
%   Prop: S subset Sigma
% ------------------------------------
\begin{prop}\label{prop:SsubSig}
If $s,\sigma >0$, $s<s_1$ and $\sigma<\sigma_1$, then $S_s^\sigma(\mathbb{R}^n) \subseteq \Sigma_{s_1}^{\sigma_1}(\mathbb{R}^n)$.
\end{prop}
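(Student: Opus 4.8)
The plan is to start from the defining estimate for $f \in S_s^\sigma(\mathbb{R}^n)$, namely that there exist $C,h>0$ with $\sup_x |x^\alpha D^\beta f(x)| \leq C h^{|\alpha+\beta|} \alpha!^s \beta!^\sigma$ for all $\alpha,\beta$, and to show that $f$ satisfies the $\Sigma_{s_1}^{\sigma_1}$ estimate, i.e. that for \emph{every} $k>0$ there is a constant $C_k$ with $\sup_x |x^\alpha D^\beta f(x)| \leq C_k k^{|\alpha+\beta|} \alpha!^{s_1} \beta!^{\sigma_1}$. The key point is that since $s_1 > s$ and $\sigma_1 > \sigma$, the factorials $\alpha!^{s_1}$ and $\beta!^{\sigma_1}$ grow strictly faster than $\alpha!^s$ and $\beta!^\sigma$, and one can absorb the fixed constant $h$ into these faster-growing factorials at the cost of a constant depending on $k$.

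Concretely, I would fix an arbitrary $k>0$ and write, for each multi-index $\alpha$,
\begin{equation*}
h^{|\alpha|}\alpha!^s = k^{|\alpha|}\alpha!^{s_1}\cdot \left(\frac{h}{k}\right)^{|\alpha|}\alpha!^{-(s_1-s)}.
\end{equation*}
The second factor is bounded: writing $t = s_1 - s > 0$, the quantity $(h/k)^{|\alpha|}\alpha!^{-t}$ tends to $0$ as $|\alpha|\to\infty$ (since $\alpha!^{-t}$ decays super-exponentially in $|\alpha|$ while $(h/k)^{|\alpha|}$ grows only exponentially), hence it is bounded by some constant $A = A(h,k,s_1-s,n)$ over all $\alpha \in \mathbb{N}^n$; one can make this quantitative using $\alpha! \geq (|\alpha|/n)^{|\alpha|} \cdot \text{const}$ or simply invoke that $\sum_\alpha x^{|\alpha|}/\alpha!^t$ converges for every $x>0$. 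The same argument applied to the $\beta$-variable gives a bound $h^{|\beta|}\beta!^\sigma \leq B\, k^{|\beta|}\beta!^{\sigma_1}$ for a constant $B = B(h,k,\sigma_1-\sigma,n)$. Multiplying the two estimates and combining with the original bound yields
\begin{equation*}
\sup_{x\in\mathbb{R}^n}|x^\alpha D^\beta f(x)| \leq C\,h^{|\alpha+\beta|}\alpha!^s\beta!^\sigma \leq (CAB)\, k^{|\alpha+\beta|}\alpha!^{s_1}\beta!^{\sigma_1},
\end{equation*}
so setting $C_k = CAB$ we obtain \eqref{GScond} with $h$ replaced by the arbitrary $k$ and $C$ replaced by $C_k$, which is exactly the condition for $f \in \Sigma_{s_1}^{\sigma_1}(\mathbb{R}^n)$.

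There is essentially no hard obstacle here; the only point requiring a little care is the uniform boundedness of $(h/k)^{|\alpha|}\alpha!^{-(s_1-s)}$ over all multi-indices $\alpha$, which is where the strict inequalities $s<s_1$ and $\sigma<\sigma_1$ are genuinely used — if one only had $s \le s_1$ the factor would not decay and no absorption would be possible. I would state this boundedness as a short elementary lemma (or inline remark), noting that for any $a>0$ and any $t>0$, $\sup_{m\geq 0} a^m/(m!)^t < \infty$, and then reduce the multi-index case to it via $\alpha! \geq (|\alpha|!/n^{|\alpha|})$-type inequalities or by factoring coordinatewise. Everything else is bookkeeping with constants.
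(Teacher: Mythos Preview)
Your argument is correct and is exactly the standard elementary computation one would expect here. The paper itself does not give a proof of this proposition at all; it is simply stated as a ``basic result'' and left to the reader. Your write-up therefore supplies precisely the details the paper omits: the absorption of the fixed constant $h$ into the faster-growing factorials $\alpha!^{s_1}$ and $\beta!^{\sigma_1}$ via the observation that $\sup_{m\geq 0} a^m/(m!)^t < \infty$ for any $a>0$ and $t>0$, which is where the strict inequalities $s<s_1$, $\sigma<\sigma_1$ enter. There is nothing to correct or compare.
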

We will now discuss the topology of Gelfand-Shilov spaces. To do this we need the following definition.
\begin{definition}\label{def:indproj}
Suppose $V_j$,  $j=0,1,2,\dots $, are Banach spaces, $$V= \bigcap_{j\geq 0} V_j$$ and $$W = \bigcup_{j\geq 0} V_j.$$
\begin{enumerate}[label=(\alph*)]
    \item Let $i_j:V\rightarrow V_j $ be inclusion maps. We say that the \emph{projective limit}  is the space $V$ with the smallest possible topology such that $i_j$ is continuous for all $j$. We write this as
    $$ V = \projlim_{j\geq 0} V_j. $$ 
    \item Suppose further that $V_j\hookrightarrow V_{j+1}$, meaning that $V_j$ is continuously embedded in $V_{j+1}$, and let $\tilde{i}_j:V_j \rightarrow V_{j+1}$ be inclusion maps. We say that the \emph{inductive limit} is the space $W$ with the greatest possible topology such that $\tilde{i}_j$ is continuous for all $j$. We write this as
    $$ W = \indlim_{j\geq 0} V_j. $$
\end{enumerate}
\end{definition}
With these definitions and propositions in mind, we can construct topologies on $S_s$, $S^\sigma$, $\Sigma_s$ and $\Sigma^\sigma$ and define their duals. For more information on topological vector spaces, see for instance \cite{Schaefer}.
% ------------------------------------
%   Def: S_{s,h}^sigma
% ------------------------------------
\begin{definition}\label{def:top}
\begin{enumerate}
    \item Let $V_{s,r,N}(\mathbb{R}^n)$ consist of all $f\in C^\infty(\mathbb{R}^n)$ such that
 \begin{equation*}
     ||f||_{s,r,N}= \sup_{x\in\mathbb{R}^n, |\alpha|\leq N} \left|D^\alpha f(x)e^{r|x|^{1/s}}\right| < \infty.
 \end{equation*}
 \item Let $V_{r,M}^\sigma(\mathbb{R}^n)$ consist of all $f\in C^\infty(\mathbb{R}^n)$ such that
\begin{equation*}
     ||f||_{r,M}^\sigma= \sup_{\xi\in\mathbb{R}^n, |\beta|\leq M} \left| D^\beta \hat{f}(\xi)e^{r|\xi|^{1/\sigma}}\right| < \infty.
\end{equation*}
\end{enumerate}
\end{definition}
We see that
\begin{equation*}
    S_s(\mathbb{R}^n) = \indlim_{r>0}\left(\projlim_{N\geq 0} V_{s,r,N}(\mathbb{R}^n) \right)
\end{equation*}
and
\begin{equation*}
    S^\sigma(\mathbb{R}^n) = \indlim_{r>0}\left(\projlim_{M\geq 0} V_{r,M}^\sigma(\mathbb{R}^n) \right),
\end{equation*}
which implies
\begin{equation*}
    S_s(\mathbb{R}^n) = \bigcup_{r>0}\left(\bigcap_{N\geq 0} V_{s,r,N}(\mathbb{R}^n) \right),\quad S^\sigma(\mathbb{R}^n) = \bigcup_{r>0}\left(\bigcap_{M\geq 0} V_{r,M}^\sigma(\mathbb{R}^n) \right).
\end{equation*}
For the $\Sigma_s$- and $\Sigma^\sigma$-spaces, we obtain
\begin{equation*}
    \Sigma_s(\mathbb{R}^n) = \projlim_{r>0}\left(\projlim_{N\geq 0} V_{s,r,N}(\mathbb{R}^n) \right)
\end{equation*}
and
\begin{equation*}
    \Sigma^\sigma(\mathbb{R}^n) = \projlim_{r>0}\left(\projlim_{M\geq 0} V_{r,M}^\sigma(\mathbb{R}^n) \right)
\end{equation*}
which implies
\begin{equation*}
    \Sigma_s(\mathbb{R}^n) = \bigcap_{r>0}\left(\bigcap_{N\geq 0} V_{s,r,N}(\mathbb{R}^n) \right),\quad  \Sigma^\sigma(\mathbb{R}^n) = \bigcap_{r>0}\left(\bigcap_{M\geq 0} V_{r,M}^\sigma(\mathbb{R}^n) \right).
\end{equation*}
\begin{remark}
While $\Sigma_s$ and $\Sigma^\sigma$ are Fréchet spaces for all $s,\sigma>0$, the same is not known to be true for $S_s$ and $S^\sigma$ in current literature. 
\end{remark}
This leads us to define the dual spaces of $S_s$ and $S^\sigma$ in the following way.
% ------------------------------------
%   Def: (S_s)'  (S^sigma)'
% ------------------------------------
\begin{definition}\label{def:Sdual}
We will denote a functional $f$ of such a dual space being applied to a test function $\psi$ in the appropriate corresponding space by $f(\psi) = \langle f, \psi \rangle$.
\begin{enumerate}[label=(\roman*)]
    \item We say that $u\in (S_s)'(\mathbb{R}^n)$ if for every $r>0$ there is an $N\geq 0$ such that
    \begin{equation*}
        |\langle u, f \rangle | \leq C_N \sum_{|\alpha|\leq N} || D^\alpha f e^{r|\cdot|^{1/s}} ||_\infty,
    \end{equation*}
    for any $f\in S_s(\mathbb{R}^n)$.
     \item We say that $u\in (S^\sigma)'(\mathbb{R}^n)$ if for every $r>0$ there is an $N\geq 0$ such that
    \begin{equation*}
        |\langle u, f \rangle | \leq C_N \sum_{|\alpha|\leq N} || D^\alpha \hat{f} e^{r|\cdot|^{1/\sigma}} ||_\infty,
    \end{equation*}
    for any $f\in S^\sigma(\mathbb{R}^n)$.
\end{enumerate}
\end{definition}
Similarly, we define the dual spaces of $\Sigma_s$ and $\Sigma^\sigma$ as follows.
\begin{definition}\label{def:Sigdual}
\begin{enumerate}[label=(\roman*)]
    \item We say that $u\in (\Sigma_s)'(\mathbb{R}^n)$ if there is an $r_0>0$ and an $N\geq 0$ such that
    \begin{equation*}
        |\langle u, f \rangle | \leq C \sum_{|\alpha|\leq N} || D^\alpha f e^{r_0|\cdot|^{1/s}} ||_\infty,
    \end{equation*}
    for any $f\in \Sigma_s(\mathbb{R}^n)$.
     \item We say that $u\in (\Sigma^\sigma)'(\mathbb{R}^n)$ if there is an $r_0>0$ and an $N\geq 0$ such that
    \begin{equation*}
        |\langle u, f \rangle | \leq C_N \sum_{|\alpha|\leq N} || D^\alpha \hat{f} e^{r_0|\cdot|^{1/\sigma}} ||_\infty,
    \end{equation*}
    for any $f\in \Sigma^\sigma(\mathbb{R}^n)$.
\end{enumerate}
\end{definition}
% ----------------------------------
%  Rem: D'
% ----------------------------------
\begin{remark}
Since $S_0(\mathbb{R}^n) = C_c^\infty(\mathbb{R}^n)$, the space of compactly supported smooth functions, (cf. \cite[p.~170]{gel}) we have $(S_0)'(\mathbb{R}^n) = \mathscr{D}'(\mathbb{R}^n)$. Since $S_s(\mathbb{R}^n)$ is continuously embedded and dense in $S_{s'}(\mathbb{R}^n)$ for $s\leq s'$, we thus have $$(S_s)'(\mathbb{R}^n) \subseteq \mathscr{D}'(\mathbb{R}^n)$$ for all positive $s$. By Proposition \ref{prop:FT}, we therefore have $$(S^\sigma)'(\mathbb{R}^n) \subseteq \mathscr{F}\mathscr{D}'(\mathbb{R}^n)$$
for all positive $\sigma$.
\end{remark}
In Definitions \ref{def:Sdual} and \ref{def:Sigdual}, we can replace the $L^\infty(\mathbb{R}^n)$-norm with the $L^2(\mathbb{R}^n)$-norm by the arguments of \cite[p.~134]{eijndhoven}. We can extend this further with Hölder's inequality to obtain the following equivalent definitions of the dual spaces.
% ----------------------------------
%  Prop: dual space Lp
% ----------------------------------
\begin{prop}\label{prop:Ssdual}
 Suppose $1\leq p \leq \infty$.
\begin{enumerate}[label=(\roman*)]
    \item $u\in (S_s)'(\mathbb{R}^n)$ if and only if for every $r>0$ there is an $N\geq 0$ such that
    \begin{equation*}
        |\langle u, f \rangle | \leq C_N \sum_{|\alpha|\leq N} || D^\alpha f e^{r|\cdot|^{1/s}} ||_p,
    \end{equation*}
    for any $f\in S_s(\mathbb{R}^n)$.
     \item $u\in (S^\sigma)'(\mathbb{R}^n)$ if and only if for every $r>0$ there is an $N\geq 0$ such that
    \begin{equation*}
        |\langle u, f \rangle | \leq C_N \sum_{|\alpha|\leq N} || D^\alpha \hat{f} e^{r|\cdot|^{1/\sigma}} ||_p,
    \end{equation*}
    for any $f\in S^\sigma(\mathbb{R}^n)$.
\end{enumerate}
\end{prop}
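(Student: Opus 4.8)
The plan is to reduce the statement to two elementary comparisons between the weighted seminorm families for different values of $p$. Write $P_{r,N,p}(f)=\sum_{|\alpha|\le N}\|D^\alpha f\,e^{r|\cdot|^{1/s}}\|_p$, so that $u\in(S_s)'(\mathbb R^n)$ means: for every $r>0$ there are $N\ge 0$ and $C_N$ with $|\langle u,f\rangle|\le C_N\,P_{r,N,\infty}(f)$ for all $f\in S_s(\mathbb R^n)$, and the content of (i) is that $P_{r,N,\infty}$ may here be replaced by $P_{r,N,p}$. I would prove two facts, for all $f\in C^\infty(\mathbb R^n)$ and all $0<r<r'$: (a) $P_{r,N,p}(f)\le C\,P_{r',N,\infty}(f)$ for every $1\le p\le\infty$; and (b) $P_{r,N,\infty}(f)\le C\,P_{r',N+n,p}(f)$ for every $1\le p\le\infty$, with $C$ depending on $r,r',N,n,p$. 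Granting (a) and (b), the two descriptions of $(S_s)'$ coincide by the usual bootstrap on the quantifier ``for every $r$'': to pass from the $L^p$ estimate to the $L^\infty$ one, given $r$ apply the $L^p$ estimate with parameter $r/2$ and invoke (a) with $(r/2,r)$ in place of $(r,r')$; to go the other way use (b) likewise. Since the definition of $(S^\sigma)'(\mathbb R^n)$ is obtained by replacing $D^\alpha f$ and $|\cdot|^{1/s}$ by $D^\alpha\hat f$ and $|\cdot|^{1/\sigma}$, running the identical argument with $\hat f$ and $\sigma$ in place of $f$ and $s$ gives (ii).

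Estimate (a) is a one-line application of Hölder's inequality: for $p<\infty$ factor $D^\alpha f\,e^{r|\cdot|^{1/s}}=\bigl(D^\alpha f\,e^{r'|\cdot|^{1/s}}\bigr)\cdot e^{(r-r')|\cdot|^{1/s}}$ and note that $e^{(r-r')|\cdot|^{1/s}}\in L^p(\mathbb R^n)$ because $r-r'<0$ (in polar coordinates $\int_0^\infty e^{p(r-r')\rho^{1/s}}\rho^{n-1}\,d\rho<\infty$); the case $p=\infty$ is trivial.

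For (b) I would use the global Sobolev embedding $W^{n,p}(\mathbb R^n)\hookrightarrow L^\infty(\mathbb R^n)$, valid for every $1\le p\le\infty$ (for $p=1$ by the iterated fundamental theorem of calculus applied to the single derivative $\partial_{x_1}\cdots\partial_{x_n}$, for $1<p<\infty$ it is standard, and for $p=\infty$ trivial). To sidestep the non-smoothness of $e^{r|x|^{1/s}}$ at the origin, replace it by the comparable smooth weight $w(x)=e^{r\langle x\rangle^{1/s}}$ with $\langle x\rangle=(1+|x|^2)^{1/2}$; since $\langle x\rangle^{1/s}/|x|^{1/s}\to1$ one has $e^{r|x|^{1/s}}\le w(x)\le C\,e^{r''|x|^{1/s}}$ for every $r''>r$. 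Applying the embedding to $g=D^\alpha f\cdot w$ gives $\|D^\alpha f\cdot w\|_\infty\le C\sum_{|\gamma|\le n}\|D^\gamma(D^\alpha f\cdot w)\|_p$, and by Leibniz' rule it remains to control $\partial^\delta w$ for $|\delta|\le n$. Here $\partial_j\langle x\rangle^{1/s}=s^{-1}x_j\langle x\rangle^{1/s-2}$ is smooth on all of $\mathbb R^n$ (as $\langle x\rangle\ge1$) and polynomially bounded, so $\partial^\delta w=Q_\delta\,w$ with $Q_\delta$ smooth and $|Q_\delta(x)|\le C_\delta\langle x\rangle^{M_\delta}$; hence $|\partial^\delta w(x)|\le C_\delta\langle x\rangle^{M_\delta}e^{r\langle x\rangle^{1/s}}\le C'_\delta\,e^{r'|x|^{1/s}}$, the polynomial factor being absorbed by an arbitrarily small increase of the exponent (this is exactly where $r<r'$ is used). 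Feeding this back, $|D^\gamma(D^\alpha f\cdot w)(x)|\le C\sum_{|\delta|\le|\alpha|+n}|D^\delta f(x)|\,e^{r'|x|^{1/s}}$, so $\|D^\gamma(D^\alpha f\cdot w)\|_p\le C\,P_{r',|\alpha|+n,p}(f)$ and (b) follows.

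I expect this weight-derivative bookkeeping, together with tracking the ``lose a bit of $r$ at each step'' mechanism, to be the only delicate point in the argument. For $2\le p\le\infty$ it can be avoided entirely: one may instead start from the $L^2$-version of the definitions (available by the arguments of \cite{eijndhoven}, as noted before the proposition) and apply Hölder's inequality once more, using that $\|h\,e^{r|\cdot|^{1/s}}\|_2\le C\,\|h\,e^{r'|\cdot|^{1/s}}\|_p$ for such $p$ and $r<r'$; the Sobolev step is then needed only to handle $1\le p<2$.
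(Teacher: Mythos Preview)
Your argument is correct. The paper does not actually supply a proof here: it remarks that the $L^\infty$-norm may be replaced by the $L^2$-norm ``by the arguments of \cite{eijndhoven}'' and then asserts that H\"older's inequality extends this to general $L^p$. Your route is different and more self-contained. Rather than passing through the $L^2$ case, you compare the $L^p$- and $L^\infty$-seminorm families directly: H\"older with the decaying factor $e^{(r-r')|\cdot|^{1/s}}$ gives (a), and the weighted Sobolev embedding $W^{n,p}(\mathbb R^n)\hookrightarrow L^\infty(\mathbb R^n)$ (valid also at $p=1$ via the iterated fundamental theorem of calculus, as you note) together with the smooth replacement weight $e^{r\langle x\rangle^{1/s}}$ gives (b). As you yourself observe in the final paragraph, the paper's H\"older-from-$L^2$ shortcut handles $2\le p\le\infty$ directly, but for $1\le p<2$ one cannot bound $\|h\,e^{r|\cdot|^{1/s}}\|_2$ by $\|h\,e^{r'|\cdot|^{1/s}}\|_p$ without bringing in derivatives; your Sobolev step is exactly the missing ingredient, so your argument in fact completes what the paper's sketch leaves implicit. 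The cost is the derivative bookkeeping for the smooth weight, but the estimates $|\partial^\delta w|\le C_\delta\langle x\rangle^{M_\delta}w$ and the absorption of the polynomial factor into a slightly larger exponent are correct, so (b) goes through as stated.
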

Replacing the $L^\infty$-norm with $L^p$-norms, $1\leq p < \infty$, is possible for the $\Sigma_s$ and $\Sigma^\sigma$ duals by similar arguments.

For $f\in (S_s)'$ or $f\in (S^\sigma)'$ we will also consider
$(f,\psi)$, which we denote to mean the continuous extension of the regular inner product of $L^2$ given by
\begin{equation*}
    (f,\psi)_2 = \int_{\mathbb{R}^n} f(y) \overline{\phi(y)} \, d y
\end{equation*}
to $f\in(S_s)'$, $\psi\in S_s$ or $f\in(S^\sigma)'$, $\psi\in S^\sigma$. The fact that this inner product can be extended continuously to duals of Gelfand-Shilov spaces follows by the fact that $(S_s,L^2,(S_s)')$ forms a \emph{Gelfand triple} (cf. \cite{bannert}). The same is true of $S^\sigma$, $\Sigma_s$ and $\Sigma^\sigma$.

We also recall the following definition of the short-time Fourier transform, which serves a pivotal role in several of the characterizations in this paper.
% ------------------------------------
%   Def: STFT
% ------------------------------------
\begin{definition}
 The \emph{short-time Fourier transform} of $f\in(S_s)'(\mathbb{R}^n)$ with window function $\phi\in S_s(\mathbb{R}^n)$ is given by
\begin{equation*}
    V_\phi f(x,\xi) = \mathscr{F}\Big[ f \overline{\phi(\cdot - x)} \Big] (\xi) = (2\pi)^{-n/2}(f, \phi(\cdot-x)e^{i\langle \cdot,\xi\rangle}).
\end{equation*}

For $f$ belonging to $(S^\sigma)'(\mathbb{R}^n)$, $(\Sigma_s)'(\mathbb{R}^n)$ or $(\Sigma^\sigma)'(\mathbb{R}^n)$, we define the short-time Fourier transform by replacing each occurrence of $S_s$ above with $S^\sigma$, $\Sigma_s$ and $\Sigma^\sigma$, respectively.
\end{definition}

% -------------------------------------------------------------------
%                           BASIC PROPERTIES
% -------------------------------------------------------------------
%\section{Basic properties}

\begin{comment}
% ------------------------------------
%   Prop: S_s^sigma = S_s ^ S^sigma
% ------------------------------------
\begin{prop}\label{prop:SEqualsSsIntSsigma}
Suppose $s,\sigma>0$.
\begin{enumerate}[label=(\alph*)]
    \item If $s+\sigma\geq 1$, then
    \begin{equation*}
        S_s^\sigma(\mathbb{R}^n) = S_s(\mathbb{R}^n)\cap S^\sigma(\mathbb{R}^n).
    \end{equation*}
    \item If $s+\sigma>1$, then
    \begin{equation*}
    \Sigma_s^\sigma(\mathbb{R}^n) = \Sigma_s(\mathbb{R}^n) \cap \Sigma^\sigma(\mathbb{R}^n).
    \end{equation*}
\end{enumerate}
\end{prop}
% ------------------------------------
%   Prop: S_s^sigma alt char
% ------------------------------------
\begin{prop} \label{thm:Sigmachar}
Suppose $s,\sigma>0$. 
\begin{enumerate}[label=(\alph*)]
    \item If $s+\sigma\geq 1$, then $f\in S_s^\sigma(\mathbb{R}^n)$ if and only if \begin{equation*}
    |f(x)| \leq A e^{-a |x|^{1/s}}, \quad |\hat{f}(\xi)| \leq B e^{- b |\xi|^{1/\sigma}}
    \end{equation*}
    for some constants $A,B,a,b>0$.
    \item If $s+\sigma>1$, then $f\in\Sigma_s^\sigma(\mathbb{R}^n)$ if and only if \begin{equation*}
    |f(x)| \leq A e^{-a |x|^{1/s}}, \quad |\hat{f}(\xi)| \leq B e^{- b |\xi|^{1/\sigma}}
    \end{equation*}
    for every $a,b>0$, where $A,B>0$ depend only on $a$ and $b$, respectively.
\end{enumerate}
\end{prop}
\end{comment}
% -------------------------------------------------------------------
%                            NONTRIVIALITY
% -------------------------------------------------------------------
\section{Non-triviality of
$\Sigma_s^\sigma$-spaces}\label{sec:nontriv}
In this section, we determine when $\Sigma_s^\sigma$-spaces are nontrivial. Similar results have already been established for $S_s^\sigma$-spaces (cf. \cite{gel}). By nontrivial we mean that the space contains a function which is not constantly equal to zero. To establish non-triviality conditions, we will need two propositions.

The following proposition follows by similar arguments to those in \cite[p.~172-175]{gel}.
% ------------------------------------
%   Prop: sigma < 1 estimate
% ------------------------------------
\begin{prop}\label{prop:SigCest}
If $s,\sigma >0$, $\sigma < 1$ and $f\in \Sigma_s^\sigma(\mathbb{R}^n)$, then $f$ can be continued analytically as an entire function in the ($2n$-dimensional) complex plane. Moreover, for every $a,b>0$,
\begin{equation*}
    |f(x+i y)| \leq C \exp{\left(-a|x|^{1/s} + b |y|^{1/(1-\sigma)}\right)}
\end{equation*}
for some constant $C=C_{a,b}$.
\end{prop}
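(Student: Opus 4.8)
The plan is to show that every $f\in\Sigma_s^\sigma(\mathbb{R}^n)$ with $\sigma<1$ extends to an entire function via a Taylor series argument, using the Beurling-type bound on derivatives, and then to estimate the entire extension on complex arguments $x+iy$ by combining the decay in the real direction (from Proposition \ref{prop:Ssalt}(b), since $\Sigma_s^\sigma\subseteq\Sigma_s$) with a Taylor expansion in the imaginary direction. First I would fix $h>0$ (to be chosen small later, depending on $b$) and use \eqref{GScond}: taking $\alpha=0$ gives $|D^\beta f(x)|\le C_h h^{|\beta|}\beta!^\sigma$ uniformly in $x$. Forming the power series $\sum_\gamma \frac{D^\gamma f(x)}{\gamma!}(iy)^\gamma$, the terms are bounded by $C_h\,h^{|\gamma|}\gamma!^{\sigma-1}|y|^{|\gamma|}$; since $\sigma-1<0$, the factor $\gamma!^{\sigma-1}$ forces convergence for all $y\in\mathbb{C}^n$, so $f$ continues to an entire function and the series represents it. This is the mechanism behind analyticity; the only subtlety is passing from the multi-dimensional multi-index bookkeeping to a clean scalar estimate, which is routine (bound $\gamma!\ge (|\gamma|/n)!^n$-type inequalities, or just work coordinatewise).

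For the growth estimate, I would split the contributions. On one hand, by $\Sigma_s^\sigma\subseteq\Sigma_s$ and Proposition \ref{prop:Ssalt}(b), for every $a>0$ there is $C_a$ with $|f(x)|\le C_a e^{-a|x|^{1/s}}$ on $\mathbb{R}^n$; more relevantly, every derivative satisfies $|D^\gamma f(x)|\le C_{a,\gamma}e^{-a|x|^{1/s}}$, but to get a bound on $f(x+iy)$ that decays in $x$ I need a single constant controlling all derivatives simultaneously against $e^{-a|x|^{1/s}}$. The clean way: from \eqref{GScond} with general $\alpha$, one has $|x^\alpha D^\gamma f(x)|\le C_h h^{|\alpha+\gamma|}\alpha!^s\gamma!^\sigma$, and the standard argument converting this into exponential decay (as in the proof of Proposition \ref{prop:Ssalt}) yields $|D^\gamma f(x)|\le C_h\, K^{|\gamma|}\gamma!^\sigma e^{-a|x|^{1/s}}$ for suitable $a>0$ and $K$ depending on $h$, with a constant $C_h$ independent of $\gamma$. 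Then
\begin{equation*}
|f(x+iy)|\le\sum_\gamma \frac{|D^\gamma f(x)|}{\gamma!}|y|^{|\gamma|}\le C_h\, e^{-a|x|^{1/s}}\sum_\gamma K^{|\gamma|}\gamma!^{\sigma-1}|y|^{|\gamma|}.
\end{equation*}
The remaining task is to show the last sum is $\le C\exp(b|y|^{1/(1-\sigma)})$ for the desired $b$, with $C$ depending on $b$ (equivalently on $h$, $K$).

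The main obstacle — and the one genuinely nontrivial estimate — is this final sum. One bounds $\sum_{k\ge0}\frac{(K|y|)^k}{k!^{1-\sigma}}$ (after collapsing multi-indices) and must recognize it as an entire function of order $1/(1-\sigma)$. The standard route is: for each $k$, $\frac{t^k}{k!^{1-\sigma}}\le \sup_{k}\left(\frac{t^k}{k!^{1-\sigma}}\right)$ combined with Stirling, giving $\frac{t^k}{k!^{1-\sigma}}\le e^{(1-\sigma)k}\, t^k k^{-(1-\sigma)k}$, whose maximum over $k$ is attained near $k\sim t^{1/(1-\sigma)}$ and equals $\exp\!\big((1-\sigma)t^{1/(1-\sigma)}\big)$ up to constants; summing over $k$ costs only a polynomial factor in $t$, absorbed into the exponential at the price of enlarging the constant in front. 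Tracking how $b$ shrinks as $h$ shrinks then shows that choosing $h$ small enough at the outset makes $b$ as small as we like, which is exactly the "for every $a,b>0$" quantification in the statement; the $a$ in the decay comes for free since $f\in\Sigma_s$ gives arbitrarily fast sub-exponential decay. Assembling the two halves with a single constant $C=C_{a,b}$ completes the argument.
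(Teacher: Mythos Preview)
Your plan is correct and is precisely the classical Gelfand--Shilov argument the paper invokes (the paper gives no proof of its own, only the pointer to \cite[p.~172--175]{gel}): Taylor-expand in the imaginary direction, use the uniform derivative bound $|D^\gamma f(x)|\le C_h\,h^{|\gamma|}\gamma!^\sigma e^{-a(h)|x|^{1/s}}$ obtained from \eqref{GScond}, and bound the resulting series $\sum_k (h|y|)^k/k!^{1-\sigma}$ as an order-$1/(1-\sigma)$ entire function. One small clarification: the phrase ``$a$ comes for free from $f\in\Sigma_s$'' is slightly misleading, since Proposition~\ref{prop:Ssalt}(b) alone gives $\gamma$-dependent constants; what actually makes the argument work is that a \emph{single} choice of $h$ simultaneously produces $a(h)\to\infty$ and $b(h)\asymp h^{1/(1-\sigma)}\to 0$ as $h\to 0$, so any prescribed pair $(a,b)$ is reached by taking $h$ small enough --- but you already have the right mechanism in the preceding sentence.
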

We also find the following result in \cite[p.~228-233]{gel}.
% ------------------------------------
%   Prop: S nontrivial
% ------------------------------------
\begin{prop}\label{prop:Striv}
For positive $s$ and $\sigma$, the space $S_s^\sigma(\mathbb{R}^n)$ is nontrivial if and only if $s+\sigma \geq 1$.
\end{prop}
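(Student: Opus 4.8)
The plan is to prove the two implications separately: if $s+\sigma\ge1$, to exhibit a nonzero element of $S_s^\sigma(\mathbb R^n)$, and if $s+\sigma<1$, to show $S_s^\sigma(\mathbb R^n)=\{0\}$.

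For the first implication I would first record the elementary monotonicity $S_{s_0}^{\sigma_0}(\mathbb R^n)\subseteq S_s^\sigma(\mathbb R^n)$ whenever $s_0\le s$ and $\sigma_0\le\sigma$, which is immediate from \eqref{GScond} since $\alpha!^{s_0}\le\alpha!^s$. If $\sigma>1$, any compactly supported Gevrey function of order $\sigma$ (such functions exist precisely once the order exceeds $1$) already lies in $S_s^\sigma(\mathbb R^n)$ for every $s>0$, since multiplying by $x^\alpha$ on its compact support only contributes a factor $R^{|\alpha|}\le h^{|\alpha|}\alpha!^s$; the symmetric case $s>1$ follows by applying this to $\hat f$ via Proposition~\ref{prop:FT}. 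Otherwise $s+\sigma\ge1$ with $s,\sigma\le1$, and then $(s,\sigma)$ dominates a pair $(s_0,\sigma_0)$ with $s_0+\sigma_0=1$ and $s_0,\sigma_0\in(0,1)$ — take $(s,\sigma)$ itself if $s+\sigma=1$, and otherwise pick $s_0$ in the nonempty interval $(\max\{1-\sigma,0\},\min\{s,1\})$ and set $\sigma_0=1-s_0$. By forming tensor products it then suffices to produce a nonzero $f\in S_{s_0}^{\sigma_0}(\mathbb R)$ with $s_0+\sigma_0=1$, $s_0,\sigma_0\in(0,1)$. For $s_0=\sigma_0=\tfrac12$ the Gaussian $e^{-x^2/2}$ works, being its own Fourier transform, and more generally $e^{-x^{2m}}\in S_{1/(2m)}^{1-1/(2m)}$ for every integer $m\ge1$ (checked directly, e.g.\ via Proposition~\ref{prop:Ssalt} and a saddle-point estimate for its Fourier transform). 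For the remaining points of the line I would invoke the classical Gelfand-Shilov construction (cf.\ \cite[p.~228--233]{gel}): one explicitly builds a nonzero $f$ — typically as a suitable infinite product or lacunary power series — with $|D^\beta f(x)|\le C_\beta e^{-r|x|^{1/s_0}}$ and $|D^\beta\hat f(\xi)|\le C_\beta e^{-r|\xi|^{1/\sigma_0}}$ for all $\beta$ and some $r>0$; by Propositions~\ref{prop:Ssalt} and~\ref{prop:FT} such an $f$ belongs to $S_{s_0}\cap S^{\sigma_0}=S_{s_0}^{\sigma_0}$.

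For the second implication, let $f\in S_s^\sigma(\mathbb R^n)$ with $s+\sigma<1$. Then $\sigma<1$, so the $S_s^\sigma$-version of Proposition~\ref{prop:SigCest} (which is what \cite[p.~172--175]{gel} actually establishes, the estimate now holding for some $a,b>0$ rather than for every $a,b>0$) shows that $f$ extends to an entire function on $\mathbb C^n$ with $|f(x+iy)|\le C\exp(-a|x|^{1/s}+b|y|^{1/(1-\sigma)})$ for suitable $a,b>0$. Freezing real $x_2,\dots,x_n$ and writing $F(z)=f(z,x_2,\dots,x_n)$, $z=\xi+i\eta$, we get an entire function of one variable with $|F(\xi+i\eta)|\le C\exp(-a|\xi|^q+b|\eta|^p)$, where $q=1/s$ and $p=1/(1-\sigma)$ satisfy $q>p>1$ precisely because $s+\sigma<1$. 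I would then run a Phragm\'en--Lindel\"of argument: on any closed sector kept away from the imaginary axis by a fixed angle $\delta$ one has $|\xi|\ge(\sin\delta)|z|$, so $-a|\xi|^q+b|\eta|^p\to-\infty$ and $F$ is bounded there; on the two narrow sectors of opening $2\delta$ about the imaginary axis only $|F(z)|\le Ce^{b|z|^p}$ is available, but taking $\delta<\pi/(2p)$ the Phragm\'en--Lindel\"of principle forces $F$ bounded on these sectors too, their bounding rays lying in the region where $F$ is already bounded. Hence $F$ is bounded on $\mathbb C$, so constant by Liouville's theorem, and since $F(\xi)\to0$ as $\xi\to\pm\infty$ it vanishes identically; as the frozen variables were arbitrary, $f\equiv0$ and $S_s^\sigma(\mathbb R^n)=\{0\}$.

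The step I expect to be the main obstacle is the construction on the critical line: the Gaussian and the $e^{-x^{2m}}$ exhaust only countably many points of the line, and the monotonicity above cannot move an arbitrary point of the line onto one of these, so for general $(s_0,\sigma_0)$ with $s_0+\sigma_0=1$, $s_0,\sigma_0\in(0,1)$, one genuinely needs the Gelfand-Shilov entire-function construction, and verifying that it satisfies the full family of estimates \eqref{GScond} — equivalently, the simultaneous real-axis decay $e^{-r|x|^{1/s_0}}$, Fourier-side decay $e^{-r|\xi|^{1/\sigma_0}}$, and all derivative bounds — is the delicate part. A lesser technicality is arranging the Phragm\'en--Lindel\"of step in the triviality direction so that it remains valid for all admissible $p>1$, which is why I route it through arbitrarily thin sectors about the imaginary axis rather than a single half-plane.
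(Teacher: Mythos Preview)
The paper does not supply its own proof of this proposition; it is quoted as a known result with a bare citation to \cite[p.~228--233]{gel}. So there is no argument in the paper to compare against, and your outline is effectively a sketch of the classical Gelfand--Shilov proof itself.

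Your sketch is sound. The triviality half is complete: with $s+\sigma<1$ one has $q=1/s>p=1/(1-\sigma)>1$, the entire-extension bound $|f(x+iy)|\le C\exp(-a|x|^{q}+b|y|^{p})$ holds for some $a,b>0$ (the $S_s^\sigma$ analogue of Proposition~\ref{prop:SigCest}), and your sectorial Phragm\'en--Lindel\"of argument --- bounded on every sector kept $\delta$ away from the imaginary axis because the $|x|^{q}$ decay dominates, then Phragm\'en--Lindel\"of on the remaining thin sectors of opening $2\delta<\pi/p$ --- correctly forces each one-variable slice to be bounded, hence zero by Liouville plus the real-axis decay. This is essentially the argument in \cite{gel}.

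For the non-triviality half your reductions (monotonicity, compactly supported Gevrey functions when $\sigma>1$ or $s>1$, tensoring to reduce to $n=1$, moving down to the critical line) are all correct and standard. You correctly identify the genuine content as the construction of a nonzero element of $S_{s_0}^{1-s_0}(\mathbb R)$ for arbitrary $s_0\in(0,1)$, and you defer precisely to \cite[p.~228--233]{gel} for it. That is honest, but it means this half of your proposal is not an independent argument: it cites the very source the paper cites. If you want a self-contained proof you would need to carry out that construction (e.g.\ via suitable infinite products or via fractional powers in the Fourier integral) and verify the two-sided sub-exponential bounds; as you note, the examples $e^{-x^{2m}}$ hit only a countable set of points on the critical line and monotonicity cannot slide along the line, so there is no shortcut.
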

With these propositions in mind, we prove the main result of this section. In previous works the condition ``$s+\sigma \geq 1$, $(s,\sigma) \neq (1/2,1/2)$'' is employed instead of ``$s+\sigma > 1$''. Here, we prove that the correct condition is $s+\sigma > 1$.
% ------------------------------------
%   Thm: Sigma nontrivial
% ------------------------------------
\begin{thm} \label{thm:Sigtriv}
Suppose $s,\sigma >0$. Then the space $\Sigma_s^\sigma(\mathbb{R}^n)$ is nontrivial if and only if $s+\sigma > 1$.
\end{thm}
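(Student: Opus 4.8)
The plan is to prove both directions separately, with the ``only if'' direction being the routine one and the ``if'' direction requiring an explicit construction. For the ``only if'' part, suppose $s+\sigma \le 1$. If $s+\sigma < 1$, then by Proposition \ref{prop:SsubSig} (taking $s_1,\sigma_1$ slightly larger but still with $s_1+\sigma_1 < 1$) together with Proposition \ref{prop:Striv} applied to the larger index, or more directly by the inclusion $\Sigma_s^\sigma \subseteq S_s^\sigma$ and Proposition \ref{prop:Striv}, we get $\Sigma_s^\sigma = \{0\}$. The critical case is $s+\sigma = 1$. Here I would use Proposition \ref{prop:SigCest}: for $\sigma < 1$ (so $s > 0$), every $f\in\Sigma_s^\sigma$ extends to an entire function with $|f(x+iy)| \le C_{a,b}\exp(-a|x|^{1/s} + b|y|^{1/(1-\sigma)})$ for \emph{every} $a,b>0$. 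Since $1-\sigma = s$, this reads $|f(x+iy)| \le C_{a,b}\exp(-a|x|^{1/s} + b|y|^{1/s})$ for all $a,b>0$. Now fix any $a_0$; restricting to the real axis and taking $b\to 0$ — actually one must be careful since $C_{a,b}$ blows up — the right strategy is a Phragmén–Lindelöf / minimum-modulus argument: an entire function of order $1/s$ (and finite type in each ``direction'') that decays like $e^{-a|x|^{1/s}}$ on the real line for arbitrarily large $a$ must vanish identically, because its order-$1/s$ growth in the imaginary direction can be made to have arbitrarily small type, contradicting the classical fact that a nonzero entire function of order $\rho$ cannot decay faster than $e^{-\epsilon|z|^\rho}$ along any ray unless its indicator is everywhere negative, which is impossible. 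The cleanest route is: if $\sigma < 1$ reduce to showing a nonzero entire function cannot satisfy $|f(z)| \le C_b e^{b|z|^{1/s}}$ (all $b>0$, so type zero, order $\le 1/s$) while $|f(x)| \le C_a e^{-a|x|^{1/s}}$ on $\mathbb R$ (all $a>0$); apply this coordinatewise. If instead $s < 1$, swap via Proposition \ref{prop:FT}(b)... but wait, that proposition requires $s+\sigma>1$, so on the boundary I would instead use the symmetry that $f\in\Sigma_s^\sigma$ implies $\hat f\in\Sigma_\sigma^s$ still holds in the limiting sense by a direct estimate, or simply note that by $\Sigma_s^\sigma = \Sigma_\sigma^\sigma$-type duality one of $s,\sigma$ is $<1$ unless $s=\sigma=1/2$, and that borderline point is handled by the same entire-function argument applied symmetrically in $x$ and in $\xi$.

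For the ``if'' direction, assume $s+\sigma > 1$ and construct a nonzero element of $\Sigma_s^\sigma(\mathbb R^n)$. By tensoring it suffices to do $n=1$. Pick $s', \sigma'$ with $s' < s$, $\sigma' < \sigma$ and still $s' + \sigma' > 1$ (possible since $s+\sigma>1$ is an open condition); actually we want $s'+\sigma' \ge 1$ with $(s',\sigma')\neq(1/2,1/2)$, which we can always arrange. By Proposition \ref{prop:Striv}, $S_{s'}^{\sigma'}(\mathbb R)$ is nontrivial, so there is a nonzero $g\in S_{s'}^{\sigma'}$. Then by Proposition \ref{prop:SsubSig}, since $s' < s$ and $\sigma' < \sigma$, we have $g\in\Sigma_s^\sigma(\mathbb R)$, and $g\not\equiv 0$, so $\Sigma_s^\sigma$ is nontrivial. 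This is short precisely because Proposition \ref{prop:SsubSig} has been set up to make the strict-inequality passage from $S$ to $\Sigma$ immediate.

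The main obstacle is the boundary case $s+\sigma=1$ in the ``only if'' direction. The subtlety is that Proposition \ref{prop:SigCest} gives, on $\mathbb R^n$, simultaneously arbitrarily strong decay $e^{-a|x|^{1/s}}$ \emph{and} arbitrarily weak growth $e^{b|y|^{1/s}}$ in the imaginary direction, with the \emph{same exponent} $1/s = 1/(1-\sigma)$ — and one must extract from this that $f\equiv 0$. I expect to invoke the following lemma (classical, e.g.\ via the Phragmén–Lindelöf principle applied in sectors, or via Hadamard factorization and the behavior of the indicator function $h_f(\theta)$): \emph{if an entire function $F$ of one complex variable satisfies $|F(z)| \le C_\epsilon e^{\epsilon|z|^{\rho}}$ for every $\epsilon>0$ and $|F(x)|\le C_\delta e^{-\delta|x|^{\rho}}$ on the real axis for every $\delta>0$, then $F\equiv 0$.} Indeed $F$ has order $\le\rho$ and minimal type, so its indicator satisfies $h_F\le 0$ everywhere; but the real-axis decay forces $h_F(0) = h_F(\pi) \le -\delta$ for all $\delta$, i.e.\ $h_F(0)=-\infty$, which for a function of finite order is impossible unless $F\equiv 0$. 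Care is needed passing from the $2n$-dimensional statement to this one-variable statement (slice along complex lines in one coordinate while keeping the others real), and in the case $s<1$ one reruns the argument on the Fourier side after checking the Fourier characterization persists on the boundary, or appeals to the $S_s^\sigma$ triviality comparison directly since the strict inclusion $\Sigma_s^\sigma\subsetneq S_s^\sigma$ combined with nontriviality of $S_s^\sigma$ at $s+\sigma=1$ does \emph{not} by itself give the conclusion — so the entire-function argument genuinely cannot be bypassed and is the crux.
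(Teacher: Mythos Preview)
Your overall structure matches the paper: the inclusion $\Sigma_s^\sigma \subseteq S_s^\sigma$ together with Proposition~\ref{prop:Striv} handles $s+\sigma < 1$, and Proposition~\ref{prop:SsubSig} combined with Proposition~\ref{prop:Striv} handles $s+\sigma > 1$ exactly as you describe. The divergence is entirely in the boundary case $s+\sigma = 1$.

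You correctly invoke Proposition~\ref{prop:SigCest} to reach
\[
|f(x+iy)| \le C_{a,b}\exp\bigl(-a|x|^{1/s} + b|y|^{1/s}\bigr)\quad\text{for all }a,b>0,
\]
but then you propose a Phragm\'en--Lindel\"of / indicator-function argument whose key step (``$h_F(0)=-\infty$ is impossible for a nonzero function of finite order'') is asserted rather than proved; for functions of minimal type the standard $\rho$-trigonometric convexity statements require care, and you would need to cite or prove a nontrivial result from the Levin-style theory of entire functions. More importantly, in reducing to ``minimal type of order $1/s$ plus super-fast decay on the real axis'' you throw away information: Proposition~\ref{prop:SigCest} gives decay in $|x|$ \emph{uniformly in $y$}, not just at $y=0$.

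The paper exploits exactly this with a two-line trick that avoids Phragm\'en--Lindel\"of entirely: set $g(z)=f(z)\cdot f(iz)$. Since multiplication by $i$ swaps the roles of real and imaginary parts, the same estimate applied to $f(iz)$ gives $|f(iz)|\le C_{a,b}\exp(-a|y|^{1/s}+b|x|^{1/s})$, and multiplying yields
\[
|g(z)| \le C_{a,b}^{2}\exp\bigl((b-a)(|x|^{1/s}+|y|^{1/s})\bigr).
\]
Choosing $a>b$ makes $g$ a bounded entire function, so Liouville gives $g\equiv 0$, hence $f\equiv 0$.

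Finally, your case distinction ``if instead $s<1$, swap via Proposition~\ref{prop:FT}(b)'' is unnecessary: on the line $s+\sigma=1$ with $s,\sigma>0$ one automatically has \emph{both} $s<1$ and $\sigma<1$, so Proposition~\ref{prop:SigCest} applies directly with no need to pass to the Fourier side or worry about the borderline applicability of Proposition~\ref{prop:FT}(b).
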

\begin{proof}
Since $\Sigma_s^\sigma \subseteq S_s^\sigma$, it follows by Proposition \ref{prop:Striv} that $\Sigma_s^\sigma$ is trivial whenever $s+\sigma < 1$. Furthermore, Proposition \ref{prop:Striv} with Proposition \ref{prop:SsubSig} implies that $\Sigma_s^\sigma$ is nontrivial when $s+\sigma > 1$. Thus we need only consider the case $s+\sigma = 1$. Since $s$ and $\sigma$ are both assumed to be positive, we must have $\sigma < 1$. By Proposition \ref{prop:SigCest}, it is then true that
\begin{equation*}
    |f(z)| \leq C_{a,b} \exp{\left(-a|x|^{1/s} + b |y|^{1/s}\right)}
\end{equation*}
for every $a,b>0$, where $z=x+i y$. Moreover
\begin{equation*}
    |f(i z)| \leq C_{a,b} \exp{\left(-a|y|^{1/s} + b |x|^{1/s}\right)}
\end{equation*}
and therefore
\begin{equation*}
    | f(z)\cdot f(i z) | \leq C_{a,b}^2 \exp{\left( (b-a)( |x|^{1/s} + |y|^{1/s}) \right)}.
\end{equation*}
Since this inequality holds for all $a,b>0$, then by picking $a>b$ we see that $g(z) = f(z)\cdot f(i z)$ is bounded and tends to zero as $|x|,|y|\rightarrow \infty$. By Proposition \ref{prop:SigCest}, $f$ is an entire function, thus so is $g$. Hence Liouville's theorem implies that $g \equiv 0$. But this implies that $f\equiv 0$ as well, completing the proof.
\end{proof}
% -------------------------------------------------------------------
%                       SHORT-TIME FOURIER TRANSFORM
% -------------------------------------------------------------------
\section{Characterizations by short-time Fourier transform}\label{sec:stft}
We now move on to the characterization of $S_s$- and $S^\sigma$-spaces in terms of their short-time Fourier transforms. This is detailed in the following theorem, which is the main result of this section.
% ------------------------------------
%   Thm: S_s STFT
% ------------------------------------
\begin{thm}\label{thm:SsSTFT}
Suppose $s,\sigma> 0$.
\begin{enumerate}[label=(\roman*)]
    \item Let $\phi\in S_s(\mathbb{R}^n)\setminus\{0\}$. Then $f\in S_s(\mathbb{R}^n)$ if and only if there is an $r>0$ such that 
        \begin{equation}\label{eq:STFT3}
            |V_\phi f(x,\xi) | \leq C_N (1+|\xi|^2)^{-N} e^{- r |x|^{1/s}}
        \end{equation}
        for every $N\geq 0$.
     \item Let $\phi\in S^\sigma(\mathbb{R}^n)\setminus\{0\}$. Then $f\in S^\sigma(\mathbb{R}^n)$ if and only if there is an $r>0$ such that
        \begin{equation}\label{eq:STFT4}
            |V_{\phi} f(x,\xi) | \leq C_N (1+|x|^2)^{-N} e^{- r |\xi|^{1/\sigma}}
        \end{equation}
        for every $N\geq 0$.
\end{enumerate}
\end{thm}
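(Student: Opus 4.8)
The plan is to prove statement (i) and note that (ii) follows by applying (i) on the Fourier transform side using Proposition \ref{prop:FT} and the fundamental identity $V_\phi \hat f(x,\xi) = e^{-i\langle x,\xi\rangle} V_{\hat\phi} f(-\xi,x)$, which swaps the roles of the time and frequency variables. So I focus on (i). For the necessity direction, suppose $f \in S_s(\mathbb{R}^n)$. Using Proposition \ref{prop:Ssalt}(a), both $f$ and $\phi$ satisfy $|D^\beta f(y)|, |D^\beta \phi(y)| \le C_\beta e^{-r|y|^{1/s}}$ for some $r>0$. Since $V_\phi f(x,\xi) = \widehat{f\,\overline{\phi(\cdot-x)}}(\xi)$, the decay in $\xi$ comes from smoothness: for any $N$, $|\xi^\alpha V_\phi f(x,\xi)| \le C \|D^\alpha(f\,\overline{\phi(\cdot-x)})\|_{L^1}$, and the product rule together with the pointwise bounds on derivatives of $f$ and $\phi$ controls this, giving the polynomial factor $(1+|\xi|^2)^{-N}$. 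The decay in $x$ comes from the fact that $f(y)\overline{\phi(y-x)}$ is supported (in the sense of being large) only where both $|y|$ and $|y-x|$ are small, so the $L^1$-norm in $y$ of the product is bounded by $C e^{-r'|x|^{1/s}}$ for a slightly smaller $r'$; the elementary inequality $|y|^{1/s} + |y-x|^{1/s} \ge c(s)|x|^{1/s}$ (with $c(s)$ possibly less than $1$ when $s>1$, but still positive) is the key estimate here. Combining these yields \eqref{eq:STFT3}.

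For the sufficiency direction, suppose \eqref{eq:STFT3} holds for some $r>0$ and all $N$. The standard reconstruction formula for the short-time Fourier transform,
\begin{equation*}
    f(y) = \frac{1}{(2\pi)^{n/2}\|\phi\|_2^2} \int_{\mathbb{R}^{2n}} V_\phi f(x,\xi)\, \phi(y-x) e^{i\langle y,\xi\rangle}\, dx\, d\xi,
\end{equation*}
lets us recover $f$ and, after differentiating under the integral sign, estimate $D^\beta f(y)$. Differentiating the integrand in $y$ brings down factors of $\xi^\beta$ (from the exponential) and derivatives of $\phi$; the polynomial decay $(1+|\xi|^2)^{-N}$ with $N$ large absorbs the $\xi^\beta$ and ensures absolute convergence of the $\xi$-integral, while the $x$-integral of $e^{-r|x|^{1/s}}|D^\beta\phi(y-x)|$ is bounded by $C_\beta e^{-r'|y|^{1/s}}$ using again the sub-additivity-type inequality for $|\cdot|^{1/s}$ and the fact that $\phi \in S_s$ has derivatives decaying like $e^{-r''|\cdot|^{1/s}}$. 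This produces the bound $|D^\beta f(y)| \le C_\beta e^{-r'|y|^{1/s}}$, which by Proposition \ref{prop:Ssalt}(a) places $f$ in $S_s(\mathbb{R}^n)$.

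The main obstacle I anticipate is bookkeeping the various radii $r$: each convolution-type estimate in the variable where sub-exponential decay lives costs us a slightly smaller exponent, so I must verify that only finitely many such reductions occur and that the final exponent is still positive. This is where the inequality $|a|^{1/s}+|b|^{1/s} \ge c(s)\,|a+b|^{1/s}$ must be invoked carefully, since for $s>1$ the constant $c(s)$ is genuinely less than $1$ and one cannot be cavalier about which direction the inequality goes. A secondary technical point is justifying differentiation under the integral sign in the reconstruction formula and the convergence of the double integral; this requires the polynomial decay in $\xi$ to beat the polynomial growth introduced by differentiation, which is why it is essential that \eqref{eq:STFT3} is assumed for \emph{every} $N$ rather than a fixed one. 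The validity of the reconstruction formula itself for $f$ a priori only in $(S_s)'$ follows from the Gelfand-triple structure mentioned in the preliminaries.
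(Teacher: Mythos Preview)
Your proposal is correct. The necessity direction in (i) and the reduction of (ii) to (i) via the Fourier transform are essentially identical to the paper's argument (the paper uses the identity $V_{\hat\phi}\hat f(x,\xi)=e^{-i\langle x,\xi\rangle}V_\phi f(-\xi,x)$, which is the same identity you quote up to which of $\phi,\hat\phi$ is taken as the window; be careful with that bookkeeping).

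The genuine difference is in the sufficiency direction of (i). You go straight through the inversion formula
\[
f(y)=\|\phi\|_2^{-2}(2\pi)^{-n/2}\iint V_\phi f(x,\xi)\,\phi(y-x)e^{i\langle y,\xi\rangle}\,dx\,d\xi,
\]
differentiate under the integral, and estimate the resulting $\xi$- and $x$-integrals separately using the hypothesis and Lemma~\ref{lem:absolutineq}. The paper instead first expresses $V_\phi[D^\beta f]$ as a linear combination of $\xi^{\beta-\alpha}V_{D^\alpha\phi}f$, then invokes the change-of-window identity (Lemma~\ref{lem:twistconv}) to write each $V_{D^\alpha\phi}f$ as a twisted convolution of $V_\phi f$ with $V_{D^\alpha\phi}\phi$; only after bounding this does it integrate in $\xi$ to recover $|D^\beta f(x)|$. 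Your route is shorter and avoids Lemma~\ref{lem:twistconv} entirely. The paper's route has the minor advantage that it first establishes the STFT estimate for $D^\beta f$ with an \emph{arbitrary} window in $S_s$ (via the twisted convolution), which makes the window-independence of the characterization more transparent and feeds more directly into the seminorm equivalences used later in Section~\ref{sec:dual}. Both arguments rely on exactly the same two analytic ingredients you flagged: Lemma~\ref{lem:absolutineq} to control the loss in the exponential radius, and the availability of arbitrarily large $N$ to absorb the polynomial growth in $\xi$ coming from differentiation.
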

For the proof, we will need the following three lemmas. These lemmas follow by basic computations and are left for the reader to prove.
% ------------------------------------
%   Lem: Twisted Convolution
% ------------------------------------
\begin{lemma}\label{lem:twistconv}
For $f\in S_s(\mathbb{R}^n)$ ($f\in\Sigma_s(\mathbb{R}^n)$) and $\phi_1,\phi_2,\phi_3 \in S_s(\mathbb{R}^n)$ ($\phi_1,\phi_2,\phi_3 \in \Sigma_s(\mathbb{R}^n)$),
\begin{equation*}
    (\phi_3,\phi_1) V_{\phi_2} f(x,\xi) = \frac{1}{(2\pi)^{n/2}} \iint V_{\phi_1}f(x-y,\xi-\eta) V_{\phi_2} \phi_3 (y,\eta) e^{-i\langle x-y,\eta \rangle} \, d y \, d \eta.
\end{equation*}
\end{lemma}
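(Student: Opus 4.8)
The plan is to prove the identity in Lemma \ref{lem:twistconv} by a direct computation with the short-time Fourier transform, essentially unwinding the definition $V_\phi f(x,\xi) = \mathscr{F}[f\overline{\phi(\cdot-x)}](\xi)$ and using the basic covariance properties of $V_\phi$ under translations and modulations. First I would recall (or verify) the standard fundamental identity of time-frequency analysis: for the STFT one has a reproducing-type formula expressing $V_{\phi_2}f$ as a twisted convolution of $V_{\phi_1}f$ with $V_{\phi_2}\phi_3$, valid whenever $(\phi_3,\phi_1)\neq 0$. The cleanest route is to start from the right-hand side, substitute the definitions
\begin{equation*}
    V_{\phi_1}f(x-y,\xi-\eta) = (2\pi)^{-n/2}\int f(t)\,\overline{\phi_1(t-(x-y))}\,e^{-i\langle t,\xi-\eta\rangle}\,dt
\end{equation*}
and the analogous expression for $V_{\phi_2}\phi_3(y,\eta)$, and then interchange the order of integration. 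The absolute convergence needed to justify Fubini is immediate here because $f,\phi_1,\phi_2,\phi_3$ all lie in $S_s$ (or $\Sigma_s$), hence are Schwartz functions, so every integrand in sight is rapidly decreasing in all variables; this is why the lemma is stated only for test functions and not distributions.

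The key steps, in order, are: (1) write out the triple integral for the right-hand side with integration variables $t$ (from $V_{\phi_1}f$), $u$ (from $V_{\phi_2}\phi_3$), together with $y,\eta$; (2) perform the $\eta$-integration first — the $\eta$-dependent part of the integrand is a product of exponentials $e^{i\langle t,\eta\rangle}e^{-i\langle u,\eta\rangle}$ coming from the two STFTs, which after collecting the phase $e^{-i\langle x-y,\eta\rangle}$ yields, upon integrating in $\eta$, a factor forcing $u = t - (x-y)$, i.e. producing a delta in $u$ (more rigorously one recognizes the $\eta$-integral as a Fourier inversion in disguise); (3) carry out the resulting $u$-integration against that delta, collapsing $u\mapsto t-x+y$; (4) then do the $y$-integration, in which the $\phi$-factors reorganize into $\overline{\phi_1(t-x+y-y)}\cdot(\text{stuff in }\phi_3,\phi_2)$ — the $y$-dependence in $\overline{\phi_1}$ disappears, leaving $\overline{\phi_1(t-x)}$ times an integral over $y$ of $\phi_3(\cdot)\overline{\phi_2(\cdot)}$-type terms that evaluates to the inner product $(\phi_3,\phi_1)$; and (5) collect the surviving $t$-integral, which is exactly $(2\pi)^{-n/2}$ times $(\phi_3,\phi_1)\,\mathscr{F}[f\overline{\phi_2(\cdot-x)}](\xi) = (\phi_3,\phi_1)V_{\phi_2}f(x,\xi)$. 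Keeping careful track of which modulation/translation convention (and which complex conjugates) attaches to each $\phi_i$ is the only place where sign or argument errors can creep in, so I would do step (4)'s change of variables explicitly.

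I expect the main obstacle to be purely bookkeeping rather than conceptual: making sure the phase factors $e^{-i\langle x-y,\eta\rangle}$ and the various $e^{\pm i\langle\cdot,\cdot\rangle}$ from the three STFTs combine so that the $\eta$-integral genuinely reproduces the correct delta and no stray phase survives, and then that the remaining $y$-integral really does produce $(\phi_3,\phi_1)$ with the conjugate on the right slot. An alternative, slicker proof avoids the delta-function heuristic entirely: one uses the known covariance relations $V_{\phi_1}f(x-y,\xi-\eta) = e^{-i\langle x-y,\eta\rangle}\,V_{\phi_1}(M_\eta T_y f)(x,\xi)$-type formulas together with the orthogonality relation $\iint V_{\phi_1}f\,\overline{V_{\phi_2}g}\,=\,(f,g)\overline{(\phi_1,\phi_2)}$ applied pointwise in the appropriate variables; but since the paper declares this lemma routine, the direct Fubini computation sketched above is the intended and most self-contained argument. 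Either way, no nontriviality of the spaces is used — only that the functions are smooth and rapidly decreasing, which holds for all $s,\sigma>0$.
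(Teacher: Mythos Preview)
The paper does not prove this lemma; it is explicitly left to the reader as a routine computation. Your plan is the standard direct verification, it is correct in outline, and your step (5) lands on the right answer.

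There is, however, exactly the bookkeeping slip you warned yourself about in step (4). After the $\eta$-integration forces $u=t-x+y$, the factor $\overline{\phi_2(u-y)}$ becomes $\overline{\phi_2(t-x+y-y)}=\overline{\phi_2(t-x)}$, so it is $\phi_2$, not $\phi_1$, that loses its $y$-dependence and survives in the $t$-integral to produce $V_{\phi_2}f(x,\xi)$. The remaining $y$-integrand is $\phi_3(t-x+y)\,\overline{\phi_1(t-x+y)}$, and the substitution $v=t-x+y$ turns the $y$-integral into $\int \phi_3(v)\overline{\phi_1(v)}\,dv=(\phi_3,\phi_1)$. Thus the inner product $(\phi_3,\phi_1)$ does appear, but it comes from the $\phi_3\overline{\phi_1}$ pairing, not from a ``$\phi_3\overline{\phi_2}$-type'' integral as you wrote; with this correction the computation goes through cleanly and matches the constant $(2\pi)^{-n/2}$ in the statement.
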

% ------------------------------------
%   Lem: |y|^1/s + |y-x|^1/s 
% ------------------------------------
\begin{lemma}\label{lem:absolutineq}
If $s>0$ then there is a $C \geq 1$ such that
\begin{equation}\label{eq:absolutineq}
    C^{-1} (|x|^{1/s} + |y|^{1/s} ) \leq |y|^{1/s} + |y-x|^{1/s} \leq C (|x|^{1/s} + |y|^{1/s} ).
\end{equation}
for every $x,y\in\mathbb{R}^n$.
\end{lemma}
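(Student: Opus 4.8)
\textbf{Proof proposal for Lemma \ref{lem:absolutineq}.}

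The plan is to reduce the two-sided estimate to a one-dimensional statement about the function $t \mapsto t^{1/s}$ and then exploit homogeneity. First I would observe that the inequality is trivial when $x = 0$, so assume $x \neq 0$; then I would normalize. The right-hand inequality is the easier half: by the triangle inequality $|y-x| \leq |y| + |x|$, so it suffices to know that $(a+b)^{1/s} \leq C_s(a^{1/s} + b^{1/s})$ for nonnegative reals $a,b$. When $s \geq 1$ one has $1/s \leq 1$ and subadditivity of $t^{1/s}$ gives this with $C_s = 1$; when $0 < s < 1$ one has $1/s > 1$ and the bound $(a+b)^{1/s} \leq 2^{1/s-1}(a^{1/s}+b^{1/s})$ follows from convexity of $t^{1/s}$ (or from $\max(a,b)^{1/s} \leq a^{1/s}+b^{1/s}$ and $(a+b)^{1/s} \leq (2\max(a,b))^{1/s}$). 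Applying this with $a = |y|$, $b = |x|$ and then bounding $|y-x|^{1/s} \leq (|y|+|x|)^{1/s}$, and trivially $|y|^{1/s} \leq |y|^{1/s}$, yields the upper bound with $C$ depending only on $s$.

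For the left-hand inequality I would argue by a compactness/scaling argument rather than explicit constants. Consider the continuous function
\begin{equation*}
    F(x,y) = \frac{|y|^{1/s} + |y-x|^{1/s}}{|x|^{1/s} + |y|^{1/s}}
\end{equation*}
on the set where $(x,y) \neq (0,0)$. It is homogeneous of degree $0$ under the simultaneous scaling $(x,y) \mapsto (\lambda x, \lambda y)$, $\lambda > 0$, so its infimum over all $(x,y) \neq (0,0)$ equals its infimum over the compact sphere $|x|^2 + |y|^2 = 1$ in $\mathbb{R}^{2n}$. On that sphere $F$ is continuous and strictly positive — the numerator vanishes only if $y = 0$ and $x = 0$, which is excluded — hence it attains a positive minimum $m > 0$. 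Taking $C = \max(C_s, 1/m) \geq 1$ (where $C_s$ is the constant from the first paragraph, and enlarging if necessary to ensure $C \geq 1$) gives both inequalities simultaneously.

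The only mild subtlety — and the step I would be most careful about — is the continuity and non-vanishing of $F$ on the sphere: one must check the numerator $|y|^{1/s} + |y-x|^{1/s}$ is zero precisely when $x = y = 0$ (clear, since both terms are nonnegative and forcing $|y| = 0$ and $|y-x| = 0$ gives $x = 0$), so that $F$ is bounded away from $0$ by compactness. An entirely elementary alternative avoiding compactness is to prove the lower bound directly: from the upper-bound estimate applied with roles adjusted, $|x|^{1/s} \leq C_s(|x-y|^{1/s} + |y|^{1/s})$ by writing $x = (x-y) + y$, so $|x|^{1/s} + |y|^{1/s} \leq C_s(|x-y|^{1/s} + |y|^{1/s}) + |y|^{1/s} \leq (C_s+1)(|y|^{1/s} + |y-x|^{1/s})$, giving the left inequality with the same type of constant. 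I would likely present this elementary version, since it keeps everything explicit and self-contained, and simply take $C = C_s + 1 \geq 1$ to serve for both sides.
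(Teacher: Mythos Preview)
Your argument is correct. The paper does not actually supply a proof of this lemma; it simply remarks that Lemmas \ref{lem:twistconv}--\ref{lem:absolutineq3} ``follow by basic computations and are left for the reader to prove,'' so there is no paper proof to compare against. Your elementary route---the bound $(a+b)^{1/s}\le C_s(a^{1/s}+b^{1/s})$ combined with the triangle-inequality trick $|x|^{1/s}\le C_s(|x-y|^{1/s}+|y|^{1/s})$, yielding both halves with $C=C_s+1$---is precisely the sort of basic computation the authors intended, and is preferable to the compactness argument you also sketch since it keeps the constant explicit.
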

% ------------------------------------
%   Lem: (1 + |xi-eta|^2)^-N
% ------------------------------------
\begin{lemma}\label{lem:absolutineq3}
For any $\xi,\eta\in \mathbb{R}^n$ and any $N\geq 0$, there is a constant $C>0$ such that
\begin{equation*}
    (1+|\xi-\eta|^2)^{-N} \leq C (1+|\xi|^2)^{-N} (1+|\eta|^2)^N.
\end{equation*}
\end{lemma}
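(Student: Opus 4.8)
The plan is to recognize Lemma \ref{lem:absolutineq3} as a version of Peetre's inequality and to reduce it to a single elementary multiplicative estimate. Writing $\xi = (\xi-\eta)+\eta$, I would first observe that the claimed bound
$$(1+|\xi-\eta|^2)^{-N} \leq C\,(1+|\xi|^2)^{-N}(1+|\eta|^2)^N$$
is, after multiplying through by the positive quantity $(1+|\xi|^2)^N(1+|\xi-\eta|^2)^N$, equivalent to
$$(1+|\xi|^2)^N \leq C\,(1+|\xi-\eta|^2)^N (1+|\eta|^2)^N.$$
Thus it suffices to prove the base inequality
$$1+|\xi|^2 \leq 2\,(1+|\xi-\eta|^2)(1+|\eta|^2),$$
valid for all $\xi,\eta\in\mathbb{R}^n$, and then raise both sides to the $N$-th power (legitimate since $N\geq 0$ and both sides are positive), which yields the lemma with $C=2^N$.

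To establish the base inequality, I would expand $|\xi|^2 = |(\xi-\eta)+\eta|^2$ and apply the Cauchy--Schwarz estimate $\langle \xi-\eta,\eta\rangle \leq |\xi-\eta|\,|\eta|$, giving
$$|\xi|^2 \leq |\xi-\eta|^2 + 2|\xi-\eta|\,|\eta| + |\eta|^2.$$
The only term not already accounted for in the product $(1+|\xi-\eta|^2)(1+|\eta|^2)$ is the cross term, which I would absorb using the arithmetic--geometric mean inequality $2|\xi-\eta|\,|\eta| \leq |\xi-\eta|^2 + |\eta|^2$. Combining these bounds gives
$$1+|\xi|^2 \leq 1 + 2|\xi-\eta|^2 + 2|\eta|^2 \leq 2\bigl(1 + |\xi-\eta|^2 + |\eta|^2 + |\xi-\eta|^2|\eta|^2\bigr) = 2\,(1+|\xi-\eta|^2)(1+|\eta|^2),$$
as required.

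I do not anticipate a genuine obstacle here, as the argument is entirely elementary; the only points demanding care are bookkeeping ones. First, one must keep straight which factor carries the positive and which the negative exponent, so that the reduction to the multiplicative form is carried out correctly. Second, one should note that the resulting constant $C = 2^N$ depends only on $N$ (and is in fact independent of the dimension $n$), which is all that the statement requires. No appeal to any earlier result in the excerpt is needed.
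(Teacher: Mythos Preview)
Your argument is correct: the reduction to the base inequality $1+|\xi|^2 \leq 2(1+|\xi-\eta|^2)(1+|\eta|^2)$ is valid, the chain of elementary estimates establishing it is sound, and raising to the $N$th power yields the lemma with $C=2^N$. The paper itself does not supply a proof of this lemma---it is explicitly left to the reader as a basic computation---so there is no alternative argument to compare against; your Peetre-type approach is exactly the standard one.
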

\begin{proof}[Proof of Theorem \ref{thm:SsSTFT}]
Suppose that $f\in S_s$. For every $N\geq 0$, we have
\begin{align*}
    |(1+|\xi|^2)^N V_\phi f(x,\xi) | &= \frac{1}{(2\pi)^{n/2}} \left| \int_{\mathbb{R}^n} f(y) \overline{\phi(y-x)} (1+|\xi|^2)^N e^{-i\langle y,\xi \rangle} \, d y \right| \\
    &=\frac{1}{(2\pi)^{n/2}} \left| \int_{\mathbb{R}^n} f(y) \overline{\phi(y-x)} (1-\Delta)^N e^{-i\langle y,\xi \rangle} \, d y \right| \\
    &= \frac{1}{(2\pi)^{n/2}} \left| \sum_{\gamma_0+|\gamma|=N} \dfrac{N!}{\gamma_0! \gamma!} \int_{\mathbb{R}^n} f(y) \overline{\phi(y-x)} D^{2\gamma} e^{-i\langle y, \xi \rangle} \, d y, \right|
\end{align*}
where $\gamma'=(\gamma_0,\gamma)\in \mathbb{N}^{1+n}$ and the derivatives are taken with respect to $y$. Integration by parts together with Leibniz formula yields
\begin{align*}
    |(1+|\xi|^2)^N V_\phi f(x,\xi) | &= \frac{1}{(2\pi)^{n/2}} \left|\sum_{\gamma_0+|\gamma|=N} \dfrac{N!}{\gamma_0! \gamma!} \int_{\mathbb{R}^n} D^{2\gamma} \left[ f(y) \overline{\phi(y-x)} \right] e^{-i \langle y, \xi \rangle}\, d y \right|
    \\ &=\frac{1}{(2\pi)^{n/2}} \left|\sum_{\gamma',\alpha}  c^N_{\gamma',\alpha} \int D^{\alpha} f(y) D^{2\gamma-\alpha} \overline{\phi(y-x)} e^{-i\langle y, \xi\rangle} d y \right| \\
    &\leq \frac{1}{(2\pi)^{n/2}} \sum_{\gamma',\alpha} c^N_{\gamma',\alpha} \int\left| D^{\alpha} f(y) D^{2\gamma-\alpha} \overline{\phi(y-x)} \right| \, d y,
\end{align*}
where $\sum_{\gamma',\alpha}=\sum_{|\gamma'|=N} \sum_{\alpha\leq \gamma}$ and where $c^N_{\gamma',\alpha} = \dfrac{N!}{\gamma_0! \gamma!}\binom{2\gamma}{\alpha}$. By Proposition \ref{prop:Ssalt} there are $C_{\gamma,\alpha}, a>0$ such that
\begin{align*}
    \int\left| D^{\alpha} f(y) D^{2\gamma-\alpha} \overline{\phi(y-x)} \right| \, d y &\leq C_{\gamma,\alpha} \int e^{- a(|y|^{1/s}+|y-x|^{1/s})} \, d y,
\end{align*}
and by Lemma \ref{lem:absolutineq} there is a $c>0$ such that
\begin{equation*}
    \int\left| D^{\alpha} f(y) D^{2\gamma-\alpha} \overline{\phi(y-x)} \right| \, d y \leq C'_{\gamma,\alpha} e^{-ac|x|^{1/s}}
\end{equation*}
since $\int e^{-ac|y|^{1/s}} d y < \infty$. Hence, with $r=a c>0$ and $C_{N,\gamma',\alpha} = c^N_{\gamma',\alpha}C'_{\gamma,\alpha}>0$ we obtain
\begin{align*}
    |(1+|\xi|^2)^N V_\phi f(x,\xi) | &\leq \frac{1}{(2\pi)^{n/2}} \sum_{\gamma',\alpha}  C_{N,\gamma',\alpha} e^{-r|x|^{1/s}} \\
    &\leq C_N e^{-r|x|^{1/s}},
\end{align*}
where $C_N = \frac{1}{(2\pi)^{n/2}}\sum_{\gamma',\alpha} C_{N,\gamma',\alpha}$. Thus \eqref{eq:STFT3} holds for every $N\geq 0$.

Now suppose that \eqref{eq:STFT3} holds for every $N\geq 0$. This condition implies that $f\in \mathscr{S}$ (cf. \cite{grochenig}). In particular $f\in C^\infty$, hence by Proposition \ref{prop:Ssalt}, the result follows if there is an $r>0$ such that \eqref{eq:Ssalt} holds for every multi-index $\beta$.

Consider $V_\phi[D^\beta f](x,\xi)$. Integrating by parts and applying Leibniz formula gives
\begin{align*}
    V_\phi[D^\beta f](x,\xi) &= \frac{(-1)^\beta}{(2\pi)^{n/2}}\int f(y) D^\beta \left( \overline{\phi(y-x)} e^{-i\langle y,\xi \rangle} \right) \, d y \\
    &= \sum_{\alpha\leq\beta} \frac{C_{\alpha,\beta}}{(2\pi)^{n/2}} \int f(y) D^{\alpha}\overline{\phi(y-x)} \xi^{\beta-\alpha} e^{-i\langle y,\xi \rangle}\, d y \\
    &= \sum_{\alpha\leq\beta} C_{\alpha,\beta} \xi^{\beta-\alpha} V_{D^{\alpha}\phi} f(x,\xi),
\end{align*}
where $C_{\alpha,\beta} = (-1)^\beta \binom{\beta}{\alpha}$.
By Lemma \ref{lem:twistconv} we therefore have
\begin{equation}\label{eq:twist1}
    V_\phi[D^\beta f](x,\xi) =\sum_{\alpha\leq\beta} C'_{\alpha,\beta} \xi^{\beta-\alpha} \iint V_\phi f(x-y,\xi-\eta) V_{D^\beta\phi}\phi(y,\eta) e^{-i\langle x-y,\eta \rangle} d y \, d \eta,
\end{equation}
where $C'_{\alpha,\beta}=(2\pi)^{-n/2} (\phi,\phi)^{-1}C_{\alpha,\beta}$. 

For now, we consider only the double integral 
\begin{equation*}
    I = \iint V_\phi f(x-y,\xi-\eta) V_{D^\beta\phi}\phi(y,\eta) e^{-i\langle x-y,\eta \rangle} d y \, d \eta
\end{equation*}
from the right-hand side of the previous equation. Note that
\begin{equation*}
    V_{D^\beta \phi}\phi(x,\xi) = e^{-i\langle x,\xi\rangle} V_{\overline{\phi}} [D^\beta\overline{\phi}](-x,\xi),
\end{equation*}
and since $D^\beta\overline{\phi},\overline{\phi}\in S_s\setminus\{0\}$ the first part of this theorem now implies that there is an $r_1 > 0$ such that
\begin{equation*}
    |V_{D^\beta \phi}\phi(x,\xi)| \leq C_{\beta,N_1} (1+|\xi|^2)^{-N_1} e^{-r_1 |x|^{1/s}}
\end{equation*}
for every $N_1\geq 0$ and every $\beta$. (Note that all the derivatives of $\overline{\phi}$ fulfill Proposition \ref{prop:Ssalt} with the same exponent, hence we can use the same $r_1>0$ for every $\beta$.) By assumption, \eqref{eq:STFT3} holds for all $N\geq 0$. For any given $N$, pick $N_1 > N$. We now obtain
\begin{align*}
    I &\leq A_{\beta,N} \iint (1+|\xi-\eta|^2)^{-N} e^{- r |x-y|^{1/s}}(1+|\eta|^2)^{-N_1} e^{-r_1 |y|^{1/s}} d y d \eta \\
    &= A_{\beta,N} I_1 I_2
\end{align*}
where $A_{\beta,N} = C_N C_{\beta,N_1}$, $$I_1 = \int e^{- r |x-y|^{1/s}} e^{-r_1 |y|^{1/s}} d y$$ and $$I_2 = \int (1+|\xi-\eta|^2)^{-N}(1+|\eta|^2)^{-N_1} d \eta.$$  
In order to estimate $I_1$, we let $r_2 = \min\{r,r_1\}$ and apply Lemma \ref{lem:absolutineq} to obtain $c>0$ such that
\begin{align*}
    I_1 &\leq \int e^{-r_2( |y-x|^{1/s} + |y|^{1/s})} d y \\
    &\leq e^{- r_2 c |x|^{1/s}} \int e^{- r_2 c |y|^{1/s}} d y \\
    &= B e^{- r_2 c |x|^{1/s}},
\end{align*}
where $B = \int e^{- r_2 c |y|^{1/s}} d y < \infty$. Since $N_1 > N$, Lemma \ref{lem:absolutineq3} gives
\begin{align*}
    I_2 &= \int (1+|\xi-\eta|^2)^{-N}(1+|\eta|^2)^{-N_1} d \eta \\
    &\leq C (1+|\xi|^2)^{-N}\int (1+|\eta|^2)^{N - N_1} d \eta \\
    &= B_N (1 + |\xi|^2)^{- N},
\end{align*}
where $B_N = C \int (1+|\eta|^2)^{N - N_1} d \eta < \infty$.
Combining these estimates, we get
\begin{equation*}
    I \leq B_{\beta,N} (1+|\xi|^2)^{-N} e^{- r_2 c |x|^{1/s}}
\end{equation*}
for every $N\geq 0$, where $B_{\beta,N} = A_{\beta,N} B B_N $.  Combining this with \eqref{eq:twist1}, we obtain
\begin{equation} \label{eq:twist2}
    | V_\phi [D^\beta f](x,\xi) | \leq \sum_{\alpha\leq\beta} B_{\alpha,\beta,N} \xi^{\beta - \alpha} (1+|\xi|^2)^{-N} e^{- c r |x|^{1/s}}
\end{equation}
for every $N\geq 0$, where $B_{\alpha,\beta,N} = C'_{\alpha,\beta} B_{\beta,N}$. 

We now integrate both sides of \eqref{eq:twist2} with respect to $\xi$. Note that
\begin{equation*}
    |V_\phi f(x,\xi)| = (2\pi)^{-n/2}\left|\left(\hat{f}_{-x}*\psi\right) (\xi)\right|,
\end{equation*}
where $\psi = \mathscr{F}\left[\,\overline{\phi}\,\right]$, and since $\mathscr{F}[f_a](\eta) = e^{-i\langle a, \eta \rangle}\hat{f}(\eta)$,
\begin{align*}
    (2\pi)^{-n/2}  \int_{\mathbb{R}^n}\left|\left(\hat{f}_{-x}*\psi\right) (\xi) \right|\, d \xi &\geq (2\pi)^{-n/2} \left| \int_{\mathbb{R}^n}\left(\hat{f}_{-x}*\psi\right) (\xi) \, d \xi \right| \\
    &= (2\pi)^{-n/2} \left|\iint \hat{f}_{-x}(\eta)\psi(\xi - \eta) \, d \eta \, d \xi  \right| \\
    &= (2\pi)^{-n/2} \left| \int e^{i\langle x, \eta \rangle}\hat{f}(\eta) \, d \eta \int \psi(\xi - \eta) \, d \xi \right| \\
    &= |f(x)|\left|\int \psi(\xi - \eta) \, d \xi\right|.
\end{align*}
Since $\int \psi(\xi - \eta) \, d \xi <\infty$, we therefore obtain
\begin{equation}\label{eq:twist3}
    |D^\beta f(x)| \leq C_\phi \int | V_\phi [D^\beta f](x,\xi) | d \xi,
\end{equation}
for some constant $C_\phi > 0$. Moreover, if we fix $N > |\beta|$, then
\begin{equation*}
    \int \xi^{\beta - \alpha} (1+|\xi|^2)^{-N} d \xi = D_{\alpha,\beta} < \infty 
\end{equation*}
for each $\alpha \leq \beta$ and thus, with $r' = r_2 c$,
\begin{equation} \label{eq:twist4}
    \int | V_\phi [D^\beta f](x,\xi) | d \xi \leq \sum_{\alpha\leq\beta} B_{\alpha,\beta,N} D_{\alpha,\beta} e^{- r' |x|^{1/s}}.
\end{equation}
Finally let $C_\beta = C_\phi^{-1}\sum_{\alpha\leq\beta} B_{\alpha,\beta,N} D_{\alpha,\beta}$. Then combining \eqref{eq:twist3} with \eqref{eq:twist4} now yields
\begin{equation*}
    |D^\beta f(x) | \leq C_\beta e^{- r' |x|^{1/s}}
\end{equation*}
for every multi-index $\beta$. This completes the proof of (i). 

To prove (ii), we first note that by Proposition \ref{prop:FT}, $f\in S^\sigma$ and $\phi\in S^\sigma\setminus\{0\}$ if and only if $\hat{f}\in S_\sigma$ and $\hat{\phi}\in S_\sigma\setminus\{0\}$. By (i), we therefore have $f\in S^\sigma$ if and only if
\begin{equation*}
    |V_{\hat{\phi}} \hat{f}(x,\xi)| \leq C_N (1+|\xi|^2)^{-N} e^{-r |x|^{1/\sigma}}
\end{equation*}
for every $N\geq 0$. Since $V_{\hat{\phi}} \hat{f}(x,\xi) = e^{-i\langle x,\xi \rangle} V_{\phi}f (-\xi,x)$, this condition can be rewritten as
\begin{equation*}
    |V_{\phi} f(-\xi,x)| \leq C_N (1+|\xi|^2)^{-N} e^{-r |x|^{1/\sigma}}.
\end{equation*}
Performing a variable substitution now yields \eqref{eq:STFT4}. This completes the proof.
\end{proof}

Utilizing Proposition \ref{prop:Ssalt}(b) instead of Proposition \ref{prop:Ssalt}(a), we obtain the following characterizations of short-time Fourier transforms in $\Sigma_s$ and $\Sigma^\sigma$ by analogous arguments.
% ------------------------------------
%   Thm: Sigma_s STFT
% ------------------------------------
\begin{thm}
Suppose $s,\sigma> 0$.
\begin{enumerate}[label=(\roman*)]
    \item Let $\phi\in \Sigma_s(\mathbb{R}^n)\setminus\{0\}$. Then $f\in \Sigma_s(\mathbb{R}^n)$ if and only if for every $r>0$ and every $N\geq 0$, 
        \begin{equation*}
            |V_\phi f(x,\xi) | \leq C_{r,N} (1+|\xi|^2)^{-N} e^{- r |x|^{1/s}}.
        \end{equation*}
    \item Let $\phi\in \Sigma^\sigma(\mathbb{R}^n)\setminus\{0\}$. Then $f\in \Sigma^\sigma(\mathbb{R}^n)$ if and only if for every $r>0$ and every $N\geq 0$,
\begin{equation*}
    |V_\phi f(x,\xi) | \leq C_{r,N} (1+|x|^2)^{-N} e^{- r |\xi|^{1/\sigma}}.
\end{equation*}
\end{enumerate}
\end{thm}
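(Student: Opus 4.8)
The plan is to run the proof of Theorem \ref{thm:SsSTFT} essentially verbatim, with Proposition \ref{prop:Ssalt}(b) in place of Proposition \ref{prop:Ssalt}(a), while threading the universal quantifier ``for every $r>0$'' through every estimate. For the \emph{only if} direction of (i): fix $f\in\Sigma_s$, a target rate $r>0$ and $N\geq 0$. I would carry out the same integration-by-parts and Leibniz computation as in the proof of Theorem \ref{thm:SsSTFT}, reducing $(1+|\xi|^2)^N V_\phi f(x,\xi)$ to a finite sum of integrals $\int |D^\alpha f(y)\, D^{2\gamma-\alpha}\overline{\phi(y-x)}|\,dy$. Now Proposition \ref{prop:Ssalt}(b), applied to $f$ and to $\phi$, gives for \emph{every} $a>0$ a constant with $|D^\alpha f(y)|,|D^{2\gamma-\alpha}\overline{\phi}(y)|\leq C_{a}e^{-a|y|^{1/s}}$, and Lemma \ref{lem:absolutineq} turns the product $e^{-a(|y|^{1/s}+|y-x|^{1/s})}$ into $e^{-aC^{-1}|x|^{1/s}}e^{-aC^{-1}|y|^{1/s}}$, which integrates in $y$ to a finite constant times $e^{-aC^{-1}|x|^{1/s}}$. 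Choosing $a=rC$ yields $|V_\phi f(x,\xi)|\leq C_{r,N}(1+|\xi|^2)^{-N}e^{-r|x|^{1/s}}$, and since $r,N$ were arbitrary this is the claimed bound.

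For the \emph{if} direction of (i): assume the bound holds for every $r>0$ and $N\geq 0$. As in Theorem \ref{thm:SsSTFT} this forces $f\in\mathscr{S}\subset C^\infty$, so by Proposition \ref{prop:Ssalt}(b) it suffices to show that for every $r'>0$ and every $\beta$ one has $|D^\beta f(x)|\leq C_{r',\beta}e^{-r'|x|^{1/s}}$. I would fix such $r',\beta$ and an integer $N>|\beta|$, and reuse the identity \eqref{eq:twist1} (Lemma \ref{lem:twistconv} applies in $\Sigma_s$). The window factor is handled by part (i), already proved: since $D^\beta\overline{\phi},\overline{\phi}\in\Sigma_s\setminus\{0\}$, we get $|V_{D^\beta\phi}\phi(x,\xi)|\leq C_{\beta,r_1,N_1}(1+|\xi|^2)^{-N_1}e^{-r_1|x|^{1/s}}$ for every $r_1>0$ and $N_1\geq 0$ --- note that, unlike in the Roumieu case, no uniformity-in-$\beta$ remark is needed here. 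Inserting the hypothesis on $V_\phi f$ with rate $r$ and taking $N_1>N$, the double integral splits as $I_1 I_2$ exactly as before: $I_1$ is bounded via Lemma \ref{lem:absolutineq} by $Be^{-r_2 c|x|^{1/s}}$ with $r_2=\min(r,r_1)$, and $I_2$ by Lemma \ref{lem:absolutineq3} by $B_N(1+|\xi|^2)^{-N}$. Now choose $r=r_1$ large enough that $r_2 c=rc\geq r'$; integrating \eqref{eq:twist2} in $\xi$ against $\xi^{\beta-\alpha}(1+|\xi|^2)^{-N}$ (finite since $N>|\beta|$) and combining with the lower bound \eqref{eq:twist3}--\eqref{eq:twist4} gives $|D^\beta f(x)|\leq C_{r',\beta}e^{-r'|x|^{1/s}}$, which is what we needed.

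Part (ii) is deduced from (i) by Fourier transform, as in Theorem \ref{thm:SsSTFT}: by Proposition \ref{prop:FT}(b), $f\in\Sigma^\sigma$ and $\phi\in\Sigma^\sigma\setminus\{0\}$ if and only if $\hat f\in\Sigma_\sigma$ and $\hat\phi\in\Sigma_\sigma\setminus\{0\}$, so applying (i) to $\hat f$ with window $\hat\phi$, using $V_{\hat\phi}\hat f(x,\xi)=e^{-i\langle x,\xi\rangle}V_\phi f(-\xi,x)$ and a change of variables, gives \eqref{eq:STFT4} with ``for every $r>0$''. The computations are structurally identical to those in Theorem \ref{thm:SsSTFT}, so the only genuine point of care --- and the main ``obstacle'' --- is the quantifier bookkeeping in the converse direction: one must fix the target decay rate $r'$ first, then choose the auxiliary rates in the STFT hypothesis and in the window estimate large enough to absorb the constant $C$ of Lemma \ref{lem:absolutineq} and the loss incurred by passing to $\min(r,r_1)$, before performing the $\xi$-integration. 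Everything else is routine.
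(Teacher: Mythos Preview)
Your proposal is correct and follows exactly the route the paper intends: the paper does not write out a separate proof for this theorem but simply states that it follows from the proof of Theorem~\ref{thm:SsSTFT} by using Proposition~\ref{prop:Ssalt}(b) in place of Proposition~\ref{prop:Ssalt}(a), and you have faithfully carried this out, including the one nontrivial adaptation (choosing the auxiliary rates $r=r_1$ large enough in the converse direction so that $r_2c\geq r'$). Your remark that the Beurling case does not require the uniformity-in-$\beta$ observation from the Roumieu proof is also correct and worth noting.
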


Using these short-time Fourier transform characterizations, we can obtain characterizations for the one-parameter spaces similar to those of \cite{chung} for the two-parameter spaces.
% ------------------------------------
%   Thm: S_s ^ S^sigma alt char
% ------------------------------------
\begin{thm}\label{thm:SsFT}
Suppose $s,\sigma>0$ and $f\in C^\infty(\mathbb{R}^n)$.
\begin{enumerate}[label=(\alph*)]

    \item $f\in S_s(\mathbb{R}^n)$ if and only if there is an $r>0$ such that
    \begin{equation}\label{eq:Ssaltchar}
        |f(x)|\leq C e^{-r|x|^{1/s}}, \quad |\hat{f}(\xi)|\leq C_N (1+|\xi|^2)^{-N} 
    \end{equation}
    for every $N\geq 0$.
    \item $f\in S^\sigma(\mathbb{R}^n)$ if and only if there is an $r>0$ such that
    \begin{equation}\label{eq:Ssigaltchar}
        |f(x)|\leq C_N (1+|x|^2)^{-N} , \quad |\hat{f}(\xi)|\leq C e^{-r|\xi|^{1/\sigma}}
    \end{equation}
    for every $N\geq 0$.
\end{enumerate}
\end{thm}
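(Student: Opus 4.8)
The plan is to reduce both parts to the short-time Fourier transform characterization of Theorem~\ref{thm:SsSTFT}, using the Fourier invariance of Proposition~\ref{prop:FT} and the sub-exponential decay description of Proposition~\ref{prop:Ssalt}. The forward implication in (a) is immediate: if $f\in S_s(\mathbb{R}^n)$, then Proposition~\ref{prop:Ssalt}(a) with $\beta=0$ gives $|f(x)|\le Ce^{-r|x|^{1/s}}$, while Proposition~\ref{prop:FT}(a) gives $\hat f\in S^s(\mathbb{R}^n)$; taking $\beta=0$ and $\alpha$ arbitrary in \eqref{Ssigmacond} (applied to $\hat f$) shows $\sup_\xi|\xi^\alpha\hat f(\xi)|<\infty$ for all $\alpha$, hence $|\hat f(\xi)|\le C_N(1+|\xi|^2)^{-N}$ for every $N$.

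For the converse of (a), I would fix a window $\phi\in S_s(\mathbb{R}^n)\setminus\{0\}$, which exists since $C_c^\infty(\mathbb{R}^n)\subseteq S_s(\mathbb{R}^n)$; as $f\in C^\infty$ satisfies $|f(x)|\le Ce^{-r|x|^{1/s}}$ it lies in $L^1\subseteq(S_s)'(\mathbb{R}^n)$, so $V_\phi f$ is well defined. The idea is to bound $V_\phi f$ in two separate ways and then combine them. From $|V_\phi f(x,\xi)|\le(2\pi)^{-n/2}\int|f(y)|\,|\phi(y-x)|\,dy$, the decay of $f$ together with the bound $|\phi(z)|\le C'e^{-r''|z|^{1/s}}$ from Proposition~\ref{prop:Ssalt}(a) and Lemma~\ref{lem:absolutineq} give $|V_\phi f(x,\xi)|\le A\,e^{-r_0|x|^{1/s}}$, uniformly in $\xi$, for some $r_0>0$. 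Next, writing $V_\phi f(x,\cdot)=(2\pi)^{-n/2}\,\hat f*(M_{-x}\Psi)$, where $\Psi(\eta)=\overline{\hat\phi(-\eta)}$ and $M_{-x}$ is modulation by $-x$, one has $|V_\phi f(x,\xi)|\le(2\pi)^{-n/2}(|\hat f|*|\Psi|)(\xi)$; since $\hat f$ is rapidly decreasing by hypothesis and $\Psi$ is rapidly decreasing because $\hat\phi\in S^s(\mathbb{R}^n)$, Lemma~\ref{lem:absolutineq3} yields $|V_\phi f(x,\xi)|\le E_N(1+|\xi|^2)^{-N}$, uniformly in $x$, for every $N\ge 0$. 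Multiplying the estimate at level $2N$ by the exponential estimate and taking square roots produces $|V_\phi f(x,\xi)|\le C_N(1+|\xi|^2)^{-N}e^{-(r_0/2)|x|^{1/s}}$ for all $N\ge 0$, which is exactly \eqref{eq:STFT3}; Theorem~\ref{thm:SsSTFT}(i) then gives $f\in S_s(\mathbb{R}^n)$.

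Part (b) then follows by Fourier duality: by Proposition~\ref{prop:FT}(a), $f\in S^\sigma(\mathbb{R}^n)$ if and only if $\hat f\in S_\sigma(\mathbb{R}^n)$, and applying part (a) to $\hat f$ (with $s$ replaced by $\sigma$), together with $\mathscr{F}\hat f=f(-\,\cdot\,)$ and the reflection invariance of the conditions involved, converts the two bounds on $\hat f$ and $\mathscr{F}\hat f$ into exactly \eqref{eq:Ssigaltchar} for $f$.

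The step I expect to require the most care is the second bound on $V_\phi f$, specifically its uniformity in $x$: one must observe that the translation hidden in $\overline{\phi(\cdot-x)}$ enters its Fourier transform only through a unimodular factor, so that $\widehat{\overline{\phi(\cdot-x)}}$ has the same modulus as a fixed rapidly decreasing function independent of $x$. Granting this, the two estimates are routine applications of Lemmas~\ref{lem:absolutineq} and~\ref{lem:absolutineq3}, and the final combination is elementary.
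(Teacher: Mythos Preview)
Your proposal is correct and, for the substantive converse direction of (a) and for (b), follows the paper's proof essentially verbatim: bound $|V_\phi f|$ once via $\int|f(y)||\phi(y-x)|\,dy$ and Lemma~\ref{lem:absolutineq}, once via $\int|\hat f(\eta)||\hat\phi(\eta-\xi)|\,d\eta$ and Lemma~\ref{lem:absolutineq3}, multiply and take square roots, then invoke Theorem~\ref{thm:SsSTFT}. Your remark about the unimodular factor making the $\xi$-bound uniform in $x$ is exactly the point the paper uses implicitly.

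The only place you diverge is the forward implication in (a): you read off the two estimates directly from Propositions~\ref{prop:Ssalt} and~\ref{prop:FT}, whereas the paper again routes through Theorem~\ref{thm:SsSTFT}, integrating the bound \eqref{eq:STFT3} in $\xi$ (respectively in $x$) and using $\int|V_\phi f(x,\xi)|\,d\xi\ge C_\phi|f(x)|$ (respectively $\int|V_\phi f(x,\xi)|\,dx\ge C''_\phi|\hat f(\xi)|$) to recover the pointwise bounds on $f$ and $\hat f$. Your route is shorter and more elementary; the paper's keeps both directions inside the STFT framework, which is aesthetically uniform but not needed for this implication.
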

\begin{proof}
Suppose first that $f\in S_s$. Then there is an $r>0$ such that \eqref{eq:STFT3} holds for every $N\geq 0$.

Integrating both sides of this equation with respect to $\xi$ yields
\begin{equation*}
    \int |V_\phi f(x,\xi)| \, d \xi \leq C' e^{-r|x|^{1/s}}.
\end{equation*}
On the other hand, since $V_\phi f = (\hat{f},\hat{\phi}(\cdot-\xi)e^{-i\langle \cdot, x \rangle}),$
\begin{align*}
    \int |V_\phi f(x,\xi)| \, d \xi &\geq (2\pi)^{-n/2}\left| \int e^{i\langle x,\eta\rangle} \hat{f}(\eta)  \int \overline{\hat{\phi}(\xi-\eta)}\, d \xi \, d \eta \right| \\
    &= C_\phi |f(x)|,
\end{align*}
hence we obtain $$ |f(x)| \leq C e^{-r|x|^{1/s}}.$$

Starting instead by integrating both sides of \eqref{eq:STFT3} with respect to $x$ yields the inequalities
\begin{align*}
    C_\phi'' |\hat{f}(\xi)| &\leq (2\pi)^{-n/2} \int f(y) e^{-\langle y,\xi \rangle} \int \overline{\phi(y-x)} \, d x \, d y  \\ &\leq \int |V_\phi f(x,\xi) |\, d x \\
    &\leq C'_N (1+|\xi|^2)^{-N}
\end{align*}
hence we obtain $$ |\hat{f}(\xi)|\leq C_N (1+|\xi|^2)^{-N}$$
for every $N\geq 0$.

Suppose instead that $f$ fulfills \eqref{eq:Ssaltchar} for some $r>0$ and every $N\geq 0$. Then
$$ |V_\phi (x,\xi)| \leq (2\pi)^{-n/2}\int |f(y)| |\phi(y-x)| \, d y. $$
Using what we proved in the first half of this proof, there are $C_0,r_1,r_2 > 0$ such that
$$ \int |f(y)| |\phi(y-x)| \, d y \leq C_0 \int e^{-r_1 |y|^{1/s}}e^{-r_2|y-x|^{1/s}} \, d y \leq C_1 e^{-r|x|^{1/s}} $$
for some $r>0$, where we use Lemma \ref{lem:absolutineq} for the last inequality. Hence
$$ |V_\phi f(x,\xi)| \leq C e^{-r |x|^{1/s}}. $$
Using the same strategy once more but starting with the fact that
$$ |V_\phi f (x,\xi) | \leq (2\pi)^{-n/2} \int |\hat{f}(\eta)| |\hat{\phi}(\eta - \xi)| \, d \eta, $$
and this time utilizing Lemma \ref{lem:absolutineq3} instead, we obtain
$$ |V_\phi f(x,\xi) | \leq C_N (1+|\xi|^2)^{-N} $$
for every $N\geq 0$. Combining both of these inequalities we see that for every $N\geq 0$, 
$$ |V_\phi f(x,\xi)|^2 \leq C_N (1+|\xi|^2)^{-N} e^{- r |x|^{1/s}}, $$
and in particular for $N=2k$, $k\geq 0$,
$$ |V_\phi f(x,\xi)|^2 \leq C_{2k} (1+|\xi|^2)^{-2k} e^{- r |x|^{1/s}}, $$
thus
$$ |V_\phi f(x,\xi) | \leq C_k' (1+|\xi|^2)^{-k} e^{-r' |x|^{1/s}} $$
for all $k\geq 0$, where $C_k' = \sqrt{C_{2k}}$ and $r'=r/2$. This completes the proof of (a).

To prove (b), simply perform Fourier transforms in light of Proposition \ref{prop:FT} and apply (a).
\end{proof}
As with the other results, we state the corresponding theorem for the $\Sigma_s$- and $\Sigma^\sigma$-spaces but omit its proof as it follows by analogous arguments.
% ------------------------------------
%   Thm: S_s ^ S^sigma alt char
% ------------------------------------
\begin{thm}\label{thm:SigsFT}
Suppose $s,\sigma>0$ and $f\in C^\infty(\mathbb{R}^n)$.
\begin{enumerate}[label=(\alph*)]

    \item $f\in \Sigma_s(\mathbb{R}^n)$ if and only if for every $r>0$ and $N\geq 0$,
    \begin{equation}\label{eq:Sigmasaltchar}
        |f(x)|\leq C_r e^{-r|x|^{1/s}}, \quad |\hat{f}(\xi)|\leq C_N (1+|\xi|^2)^{-N}. 
    \end{equation}
    \item $f\in \Sigma^\sigma(\mathbb{R}^n)$ if and only if for every $r>0$ and every $N\geq 0$,
    \begin{equation}\label{eq:Sigmasigaltchar}
        |f(x)|\leq C_N (1+|x|^2)^{-N} , \quad |\hat{f}(\xi)|\leq C_r e^{-r|\xi|^{1/\sigma}}.
    \end{equation}
\end{enumerate}
\end{thm}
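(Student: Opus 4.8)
The plan is to mirror the proof of Theorem \ref{thm:SsFT}, replacing each use of Proposition \ref{prop:Ssalt}(a) by Proposition \ref{prop:Ssalt}(b) and tracking how the quantifier on $r$ changes. More precisely, I would first observe that the Beurling analogue of Theorem \ref{thm:SsSTFT}, stated just above, gives: for $\phi\in\Sigma_s\setminus\{0\}$, $f\in\Sigma_s$ if and only if for every $r>0$ and every $N\geq 0$ there is $C_{r,N}$ with $|V_\phi f(x,\xi)|\leq C_{r,N}(1+|\xi|^2)^{-N}e^{-r|x|^{1/s}}$. Fixing a window $\phi\in\Sigma_s\setminus\{0\}$ (which is nonempty since $\Sigma_s$ is nontrivial for all $s>0$), the whole argument of Theorem \ref{thm:SsFT} then goes through verbatim, since every manipulation there — integrating $V_\phi f$ in $\xi$ or in $x$, recognizing the inner integral as a nonzero multiple of $f(x)$ or of $\hat f(\xi)$, and the Lemma \ref{lem:absolutineq}/Lemma \ref{lem:absolutineq3} estimates — is insensitive to whether the decay exponent $r$ is fixed or arbitrary.

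For the forward direction of part (a), I would start from $f\in\Sigma_s$, invoke the STFT characterization to get the estimate on $V_\phi f$ for \emph{every} $r>0$, integrate in $\xi$ to obtain $|f(x)|\leq C_r e^{-r|x|^{1/s}}$ for every $r>0$, and integrate in $x$ (using the $N$-dependent factor) to obtain $|\hat f(\xi)|\leq C_N(1+|\xi|^2)^{-N}$ for every $N\geq 0$ — exactly \eqref{eq:Sigmasaltchar}. For the converse, assume \eqref{eq:Sigmasaltchar}: writing $|V_\phi f(x,\xi)|\leq (2\pi)^{-n/2}\int|f(y)||\phi(y-x)|\,dy$ and using that $\phi\in\Sigma_s$ also satisfies $|\phi(y)|\leq C_\rho e^{-\rho|y|^{1/s}}$ for every $\rho>0$, Lemma \ref{lem:absolutineq} gives $|V_\phi f(x,\xi)|\leq C_r e^{-r|x|^{1/s}}$ for every $r>0$; likewise, from $|V_\phi f(x,\xi)|\leq(2\pi)^{-n/2}\int|\hat f(\eta)||\hat\phi(\eta-\xi)|\,d\eta$ and Lemma \ref{lem:absolutineq3} one gets $|V_\phi f(x,\xi)|\leq C_N(1+|\xi|^2)^{-N}$ for every $N$; multiplying, taking $N=2k$, and taking square roots yields the STFT estimate for every $r>0$ and every $k\geq 0$, so $f\in\Sigma_s$ by the characterization. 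Part (b) follows by applying part (a) to $\hat f$ and using Proposition \ref{prop:FT}(b), which requires no condition beyond $s,\sigma>0$ here since we only Fourier-transform within a single one-parameter family (i.e. $\Sigma_s\leftrightarrow\Sigma^s$).

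The one genuine point to be careful about is the bookkeeping of the window's decay: in the converse direction we need $\phi$ and $\hat\phi$ to have decay estimates with an arbitrarily large rate, which is precisely the content of $\phi\in\Sigma_s$ (via Proposition \ref{prop:Ssalt}(b)) together with $\hat\phi\in\Sigma^s$ (via Proposition \ref{prop:FT}(b)); this is what lets the constants $r$ and $N$ in the conclusion be taken arbitrary rather than merely ``some fixed value'', and it is the only place where the Beurling hypothesis on $\phi$ is actually used. I do not anticipate a substantive obstacle — the proof is a routine transcription of Theorem \ref{thm:SsFT} — but if one wanted to avoid re-deriving the Beurling STFT characterization, an alternative is to prove Theorem \ref{thm:SigsFT} directly from Proposition \ref{prop:Ssalt}(b) and Proposition \ref{prop:FT}(b) by the same integrate-in-$\xi$/integrate-in-$x$ scheme, never mentioning $V_\phi$ at all; either route is short, so I would present the STFT-based one for consistency with the rest of the section.
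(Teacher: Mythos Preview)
Your proposal is correct and follows exactly the approach the paper intends: the paper omits the proof of this theorem, stating only that it ``follows by analogous arguments'' to Theorem~\ref{thm:SsFT}, and your plan --- replace the Roumieu STFT characterization (Theorem~\ref{thm:SsSTFT}) by its Beurling counterpart and track the change of quantifier on $r$ --- is precisely that analogy. The only extra checks you flagged (nontriviality of $\Sigma_s$ so that a window $\phi$ exists, and the need for $\phi,\hat\phi$ to decay at \emph{every} rate in the converse direction) are both handled by Proposition~\ref{prop:Ssalt}(b) and Proposition~\ref{prop:FT}(b), so there is no gap.
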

% -------------------------------------------------------------------
%                            DUAL SPACES
% -------------------------------------------------------------------
\section{Characterizations of dual spaces}\label{sec:dual}
% ------------------------------------
%   Prop: STFT topology
% ------------------------------------
We now move on to the characterization of duals to one-parameter spaces $S_s$ and $S^\sigma$. These duals were defined in Section \ref{sec:prelim} via the topologies detailed in Definition \ref{def:top} and using the results of Section \ref{sec:stft}, we now arrive at the following equivalent topologies. 
\begin{prop}
Suppose $s,\sigma > 0$.
\begin{enumerate}[label=(\roman*)]
    \item Let $\phi\in S_s(\mathbb{R}^n)\setminus\{0\}$, let
    \begin{equation*}
        p^{\phi}_{s,r,N}(f) = \sup_{x,\xi\in\mathbb{R}^n} \left| V_\phi f(x,\xi) (1+|\xi|^2)^{N}e^{r |x|^{1/s}}\right|
    \end{equation*}
    and let $B_{s,r,N}(\mathbb{R}^{n})$ be the Banach space consisting of all $f\in C^\infty(\mathbb{R}^{n})$ such that $p^{\phi}_{s,r,N}(f)$ is finite. Then 
    $$ S_s(\mathbb{R}^n) = \indlim_{r>0}\left(\projlim_{N\geq 0} B_{s,r,N}(\mathbb{R}^{n}) \right) $$
    where the equality holds in a topological sense as well. 
    \item Let $\phi\in S^\sigma(\mathbb{R}^n)\setminus\{0\}$, let
    \begin{equation*}
        q^{\phi,\sigma}_{r,M}(f) = \sup_{x,\xi\in\mathbb{R}^n} \left| V_\phi f(x,\xi) (1+|x|^2)^{M}e^{r |\xi|^{1/\sigma}}\right|
    \end{equation*}
    and let $B^\sigma_{r,M}(\mathbb{R}^{n})$ be the Banach space consisting of all $f\in C^\infty(\mathbb{R}^{n})$ such that $q^{\phi,\sigma}_{r,M}(f)$ is finite. Then
    $$ S^\sigma(\mathbb{R}^n) = \indlim_{r>0}\left(\projlim_{M\geq 0} B^\sigma_{r,M}(\mathbb{R}^{n}) \right) $$
    where the equality holds in a topological sense as well. 
\end{enumerate}
\end{prop}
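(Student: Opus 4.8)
The plan is to obtain the Proposition entirely from Theorem~\ref{thm:SsSTFT}, upgrading its two implications to quantitative seminorm estimates, and then invoking the universal properties of projective and inductive limits. I treat part (i) first, since (ii) will follow from it by Fourier transform.

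\emph{Underlying sets.} First I would observe that Theorem~\ref{thm:SsSTFT}(i) says precisely that $f\in S_s(\mathbb R^n)$ iff there is $r>0$ with $p^\phi_{s,r,N}(f)<\infty$ for all $N\geq 0$, i.e. $f\in\bigcap_{N\geq 0}B_{s,r,N}(\mathbb R^n)$ for that $r$; taking unions over $r>0$ gives $S_s(\mathbb R^n)=\bigcup_{r>0}\bigcap_{N\geq 0}B_{s,r,N}(\mathbb R^n)$, which is the underlying set of $\indlim_{r>0}\projlim_{N\geq 0}B_{s,r,N}(\mathbb R^n)$. That each $B_{s,r,N}(\mathbb R^n)$ is indeed Banach I would record as a minor separate point: $p^\phi_{s,r,N}$ is a norm because $V_\phi$ with $\phi\neq 0$ is injective, and $f\mapsto V_\phi f$ identifies $B_{s,r,N}(\mathbb R^n)$ with a closed subspace of the Banach space of continuous $g$ on $\mathbb R^{2n}$ with $\sup_{x,\xi}|g(x,\xi)(1+|\xi|^2)^Ne^{r|x|^{1/s}}|<\infty$, closedness being exactly the reproducing (covariance) identity of Lemma~\ref{lem:twistconv} applied with $\phi$ as reproducing window.

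\emph{Topology.} Since both sides are $\indlim_{r>0}$ of $\projlim_{N\geq 0}$ of Banach spaces over the same two index sets, and they have the same underlying vector space, it suffices to show the identity map is continuous in each direction; by the universal properties of inductive and projective limits this reduces to two families of inequalities. On one hand, for each $r>0$ and $N\geq 0$ there should be $r'>0$ independent of $N$, an integer $M=M(N)$, and $C=C_{\phi,N}$ with
\begin{equation*}
p^\phi_{s,r',N}(f)\leq C\sum_{|\alpha|\leq M}\bigl\|D^\alpha f\,e^{r|\cdot|^{1/s}}\bigr\|_\infty .
\end{equation*}
This is exactly the content of the first half of the proof of Theorem~\ref{thm:SsSTFT} once the constants are tracked: taking the decay exponent $a=\min\{r,r_\phi\}$ (with $r_\phi$ a fixed decay exponent for $\phi$) and $r'=ac$ with $c$ from Lemma~\ref{lem:absolutineq}, that computation gives $p^\phi_{s,r',N}(f)\leq C_{\phi,N}\|f\|_{s,r,2N}$, the $2N$ arising because $D^{2\gamma}$ with $|\gamma|\leq N$ is distributed over $f$ and $\overline{\phi}$ by Leibniz. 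On the other hand, for each $r>0$ and $N\geq 0$ there should be $r'>0$ independent of $N$, $M=M(N)$, and $C=C_{\phi,N}$ with
\begin{equation*}
\bigl\|D^\beta f\,e^{r'|\cdot|^{1/s}}\bigr\|_\infty\leq C\sum_{M'\leq M}p^\phi_{s,r,M'}(f),\qquad |\beta|\leq N ,
\end{equation*}
which is what the second half of that proof yields: the twisted-convolution identity \eqref{eq:twist1}, the bound on $V_{D^\beta\phi}\phi$ (finite and depending only on $\phi$, by part (i) applied to the window), and the $\xi$-integration in \eqref{eq:twist3}--\eqref{eq:twist4} give $\|f\|_{s,r',N}\leq C_{\phi,N}\,p^\phi_{s,r,N+2}(f)$, the two extra orders coming from convergence of the $\xi$-integral and from the requirement $N_1>N$ in the twisted-convolution step, again with $r'$ of the form $r_2c$. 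Feeding these estimates into the universal-property reductions (each step $\projlim_N V_{s,r,N}$ lands continuously inside some fixed step $\projlim_N B_{s,r',N}$, and conversely) gives continuity of the identity both ways, hence topological equality.

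\emph{The $S^\sigma$ case.} Here I would pass to the Fourier transform: by Proposition~\ref{prop:FT}, $f\in S^\sigma$ iff $\hat f\in S_\sigma$ and $\phi\in S^\sigma\setminus\{0\}$ iff $\hat\phi\in S_\sigma\setminus\{0\}$, while the identity $V_{\hat\phi}\hat f(x,\xi)=e^{-i\langle x,\xi\rangle}V_\phi f(-\xi,x)$ turns $q^{\phi,\sigma}_{r,M}(f)$ into $p^{\hat\phi}_{\sigma,r,M}(\hat f)$ after the evident change of variables; so (ii) follows from (i) transported along $\mathscr F$. The main obstacle is bookkeeping: extracting from the proof of Theorem~\ref{thm:SsSTFT} the exact dependence of its constants on $f$ through only finitely many seminorms, and verifying that the shift $r\mapsto r'$ of the weight parameter can be chosen uniformly in $N$, so that one lands in a single projective-limit step of the inductive limit rather than in a step depending on $N$; the completeness of $B_{s,r,N}(\mathbb R^n)$ is the only other point, and it is handled by the closed-range remark above.
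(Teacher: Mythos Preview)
Your proposal is correct and follows essentially the same approach as the paper: both derive the result from the seminorm estimates implicit in the proof of Theorem~\ref{thm:SsSTFT}. The paper's own proof is a single sentence pointing to that theorem, whereas you carefully unpack the quantitative content (tracking the $r\mapsto r'$ shift, the loss of finitely many orders in $N$, and the $N$-uniformity needed for the inductive-limit step) and additionally verify completeness of $B_{s,r,N}$, which the paper leaves unaddressed.
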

\begin{proof}
The equivalence of the semi-norms $p^{\phi}_{s,r,N}$ and $||\cdot||_{s,r,N}$, as well as that of $q^{\phi,\sigma}_{r,M}$ and $||\cdot||^\sigma_{r,M}$ is established implicitly in the proof of Theorem \ref{thm:SsSTFT}.
\end{proof}
This proposition gives us the following equivalent definitions for the dual spaces $(S_s)'$ and $(S^\sigma)'$.
\begin{cor}
Suppose $s,\sigma > 0$.
\begin{enumerate}[label=(\roman*)]
    \item Let $\phi\in S_s(\mathbb{R}^n)\setminus\{0\}$ and let $u\in \mathscr{D}'(\mathbb{R}^n)$. Then $u\in (S_s)'(\mathbb{R}^n)$ if and only if for every $r>0$ there is an $N\geq 0$ such that
    \begin{equation*}
        |\langle u, f \rangle|\leq C_N p^{\phi}_{s,r,N}(f)
    \end{equation*}
    for all $f\in S_s(\mathbb{R}^n)$.
    \item Let $\phi\in S^\sigma(\mathbb{R}^n)\setminus\{0\}$ and $u\in\mathscr{F}\mathscr{D}'(\mathbb{R}^n)$. Then $u\in (S^\sigma)'(\mathbb{R}^n)$ if and only if for every $r>0$ there is an $M\geq 0$ such that
    \begin{equation*}
        |\langle u, f \rangle|\leq C_M q^{\phi,\sigma}_{r,M}(f)
    \end{equation*}
    for all $f\in S^\sigma (\mathbb{R}^n)$.
\end{enumerate}
\end{cor}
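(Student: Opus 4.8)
The plan is to derive this corollary directly from the preceding proposition by passing to duals. Since the proposition establishes that $S_s(\mathbb{R}^n) = \indlim_{r>0}(\projlim_{N\geq 0} B_{s,r,N}(\mathbb{R}^n))$ topologically, and the seminorm $p^{\phi}_{s,r,N}$ generates the Banach norm of $B_{s,r,N}$, a linear functional $u$ is continuous on $S_s$ exactly when it is continuous with respect to this inductive-projective topology. First I would recall the general topological fact that a linear functional on an inductive limit $\indlim_{r>0} X_r$ is continuous if and only if its restriction to each step $X_r = \projlim_{N\geq 0} B_{s,r,N}$ is continuous, and that a functional on a projective limit of Banach spaces is continuous if and only if it is bounded with respect to \emph{one} of the defining seminorms, i.e.\ there is an $N$ and a constant $C_N$ with $|\langle u,f\rangle| \le C_N p^{\phi}_{s,r,N}(f)$. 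Combining these two facts gives precisely the stated condition: for every $r>0$ there is an $N\geq 0$ and $C_N>0$ with $|\langle u,f\rangle| \le C_N p^{\phi}_{s,r,N}(f)$ for all $f\in S_s$.

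For the converse direction one checks that a functional satisfying this family of estimates is continuous on $S_s$ with its native topology; but this is immediate from the topological identification in the proposition, since the estimate says exactly that $u$ is continuous on each Banach step and hence on the inductive-projective limit. The only genuine content to verify is that such a $u$, a priori only a continuous functional, actually lies in $\mathscr{D}'(\mathbb{R}^n)$ — this is why the statement takes $u\in\mathscr{D}'(\mathbb{R}^n)$ as a standing hypothesis, consistent with the earlier remark that $(S_s)'(\mathbb{R}^n)\subseteq\mathscr{D}'(\mathbb{R}^n)$. So the proof reduces to invoking Definition \ref{def:Sdual}(i) together with the proposition: the condition in Definition \ref{def:Sdual}(i) is stated in terms of the seminorms $\|\cdot\|_{s,r,N}$, and the proposition (via its proof) shows $p^{\phi}_{s,r,N}$ and $\|\cdot\|_{s,r,N}$ are equivalent, so the two characterizations coincide term by term.

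Part (ii) follows by the same argument applied to $S^\sigma$, using the second half of the proposition and Definition \ref{def:Sdual}(ii), with the replacement of $\mathscr{D}'$ by $\mathscr{F}\mathscr{D}'$ justified by the remark following Definition \ref{def:Sigdual} (which observes $(S^\sigma)'(\mathbb{R}^n)\subseteq\mathscr{F}\mathscr{D}'(\mathbb{R}^n)$ via Proposition \ref{prop:FT}). I expect no real obstacle here: the corollary is essentially a restatement of the proposition at the level of dual pairings, and the main (minor) point to be careful about is correctly applying the order of quantifiers in the inductive-then-projective limit — "for every $r$ there exists $N$" rather than the reverse — which is exactly the asymmetry already encoded in Definition \ref{def:Sdual}. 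The proof can therefore be kept to a couple of lines: cite the proposition for the topological identity, cite the standard duality behaviour of projective and inductive limits of Banach spaces (e.g.\ via \cite{Schaefer}), and match against Definition \ref{def:Sdual}.

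\begin{proof}
This is an immediate consequence of the preceding proposition. By that proposition, $S_s(\mathbb{R}^n)$ carries the topology of $\indlim_{r>0}(\projlim_{N\geq 0} B_{s,r,N}(\mathbb{R}^n))$, where the norm on $B_{s,r,N}(\mathbb{R}^n)$ is $p^{\phi}_{s,r,N}$. A linear functional on an inductive limit of locally convex spaces is continuous precisely when its restriction to each step is continuous, and a linear functional on the projective limit $\projlim_{N\geq 0} B_{s,r,N}(\mathbb{R}^n)$ of Banach spaces is continuous precisely when it is bounded by $C_N p^{\phi}_{s,r,N}$ for some $N\geq 0$ and some $C_N>0$ (cf.\ \cite{Schaefer}). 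Hence $u\in(S_s)'(\mathbb{R}^n)$ if and only if for every $r>0$ there is an $N\geq 0$ such that $|\langle u,f\rangle|\leq C_N p^{\phi}_{s,r,N}(f)$ for all $f\in S_s(\mathbb{R}^n)$; the assumption $u\in\mathscr{D}'(\mathbb{R}^n)$ is consistent with the inclusion $(S_s)'(\mathbb{R}^n)\subseteq\mathscr{D}'(\mathbb{R}^n)$ noted earlier. This proves (i). Part (ii) follows in the same way from the second half of the proposition and Definition \ref{def:Sdual}(ii), using $(S^\sigma)'(\mathbb{R}^n)\subseteq\mathscr{F}\mathscr{D}'(\mathbb{R}^n)$.
\end{proof}
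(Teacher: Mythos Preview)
Your proposal is correct and matches the paper's approach: the corollary is stated there with no proof, as an immediate consequence of the preceding proposition (the topological identification of $S_s$ and $S^\sigma$ via the seminorms $p^{\phi}_{s,r,N}$ and $q^{\phi,\sigma}_{r,M}$) together with Definition~\ref{def:Sdual}. Your argument spelling out the duality behaviour of inductive/projective limits is exactly the routine justification the paper leaves implicit.
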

\begin{comment}
\begin{cor}\label{cor:inner}
\begin{enumerate}[label=(\roman*)]
    \item A distribution $u$ belongs to $(S_s)'(\mathbb{R}^n)$ if and only if for every $r>0$ there is an $N\geq 0$ such that
    \begin{equation*}
        |( u, f ) | \leq C_N \sum_{|\alpha|\leq N} || D^\alpha f e^{r|\cdot|^{1/s}} ||_2,
    \end{equation*}
    for any $f\in S_s(\mathbb{R}^n)$.
     \item A distribution $u$ belongs to $(S^\sigma)'(\mathbb{R}^n)$ if and only if for every $r>0$ there is an $N\geq 0$ such that
    \begin{equation*}
        |( u, f ) | \leq C_N \sum_{|\alpha|\leq N} || D^\alpha \hat{f} e^{r|\cdot|^{1/\sigma}} ||_2,
    \end{equation*}
    for any $f\in S^\sigma(\mathbb{R}^n)$.
\end{enumerate}
\end{cor}
\end{comment}
This brings us to the following characterization of the duals via short-time Fourier transforms, which is the main result of this section.
\begin{thm}\label{thm:SsSTFTdual}
% ------------------------------------
%   Thm: (S_s)' STFT
% ------------------------------------
\begin{enumerate}[label=(\roman*)]
    \item Let $\phi\in S_s(\mathbb{R}^n)\setminus\{0\}$ and $f\in \mathscr{D}'(\mathbb{R}^n)$. Then $f\in(S_s)'(\mathbb{R}^n)$ if and only if for every $r>0$ there is an $N_0\geq 0$ such that
    \begin{equation}\label{eq:STFTdual1}
        |V_\phi f(x,\xi)| \leq C_{r}(1+|\xi|^2)^{N_0} e^{r|x|^{1/s}}.
    \end{equation}
     \item Let $\phi\in S^\sigma(\mathbb{R}^n)\setminus\{0\}$ and $f\in \mathscr{F}\mathscr{D}'(\mathbb{R}^n)$. Then $f\in(S^\sigma)'(\mathbb{R}^n)$ if and only if for every $r>0$ there is an $N_0\geq 0$ such that
    \begin{equation}\label{eq:STFTdual2}
        |V_\phi f(x,\xi)| \leq C_{r}(1+|x|^2)^{N_0} e^{r|\xi|^{1/\sigma}}.
    \end{equation}
\end{enumerate}
\end{thm}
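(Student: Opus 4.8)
The plan is to deduce the dual characterization from the STFT description of the predual (Theorem~\ref{thm:SsSTFT}) together with the STFT "covariance" identities used throughout Section~\ref{sec:stft}, exploiting the fact that $(S_s, L^2, (S_s)')$ is a Gelfand triple. It suffices to prove (i), since (ii) follows by conjugating with the Fourier transform exactly as in the proof of Theorem~\ref{thm:SsSTFT}(ii): if $\phi \in S^\sigma\setminus\{0\}$ and $f \in \mathscr{F}\mathscr{D}'$, then $\hat\phi \in S_s\setminus\{0\}$ and $\hat f \in \mathscr{D}'$ by Proposition~\ref{prop:FT}, and the relation $V_{\hat\phi}\hat f(x,\xi) = e^{-i\langle x,\xi\rangle} V_\phi f(-\xi,x)$ turns \eqref{eq:STFTdual1} for $\hat f$ into \eqref{eq:STFTdual2} for $f$ after the substitution $(x,\xi)\mapsto(-\xi,x)$.

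For the ``only if'' direction of (i): assume $f \in (S_s)'$. By the corollary just above (the $p^\phi_{s,r,N}$-formulation of the dual), for every $r>0$ there is an $N\geq 0$ and $C_N>0$ with $|\langle f,g\rangle| \le C_N\, p^\phi_{s,r,N}(g)$ for all $g\in S_s$. Now, for fixed $(x,\xi)$, apply this with the test function $g = \pi(x,\xi)\phi := \phi(\cdot - x)e^{i\langle\cdot,\xi\rangle}$ (up to the $(2\pi)^{-n/2}$ normalization), so that $\langle f, g\rangle$ reproduces $V_\phi f(x,\xi)$. The point is to estimate $p^\phi_{s,r,N}(\pi(x,\xi)\phi) = \sup_{y,\eta}|V_\phi(\pi(x,\xi)\phi)(y,\eta)(1+|\eta|^2)^N e^{r|y|^{1/s}}|$. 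Since $V_\phi(\pi(x,\xi)\phi)(y,\eta)$ is, up to a unimodular factor, $V_\phi\phi(y-x,\eta-\xi)$, and $\phi\in S_s$ forces (by Theorem~\ref{thm:SsSTFT}(i), which applies to the ambient STFT $V_\phi\phi$) rapid decay in the frequency variable and sub-exponential decay $e^{-r_0|y-x|^{1/s}}$ in the time variable for some $r_0>0$, we get, after applying Lemma~\ref{lem:absolutineq} to split $|y|^{1/s} \lesssim |y-x|^{1/s}+|x|^{1/s}$ and Lemma~\ref{lem:absolutineq3} (or a polynomial Peetre inequality) to split $(1+|\eta|^2)^N \lesssim (1+|\eta-\xi|^2)^N(1+|\xi|^2)^N$, a bound of the form $p^\phi_{s,r,N}(\pi(x,\xi)\phi) \le C_r' (1+|\xi|^2)^N e^{r|x|^{1/s}}$ — here we need $r<r_0$, which is harmless since $r_0$ is fixed by $\phi$ and we may shrink $r$. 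This yields \eqref{eq:STFTdual1} with $N_0 = N$.

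For the ``if'' direction: assume \eqref{eq:STFTdual1} holds. We must show $f$ defines a continuous functional on $S_s$, i.e. verify the estimate in the corollary. Fix $g\in S_s$. Use the inversion/desingularization formula for the STFT, $g = (2\pi)^{-n/2}(\phi,\phi)^{-1}\iint V_\phi g(x,\xi)\,\pi(x,\xi)\phi \,dx\,d\xi$, to write $\langle f,g\rangle = (2\pi)^{-n/2}(\phi,\phi)^{-1}\iint V_\phi g(x,\xi)\,\overline{V_\phi f(x,\xi)}\,dx\,d\xi$ (the orthogonality relation for the STFT, valid on the Gelfand triple). Then
\begin{equation*}
|\langle f,g\rangle| \le C \iint |V_\phi g(x,\xi)|\,(1+|\xi|^2)^{N_0}e^{r|x|^{1/s}}\,dx\,d\xi.
\end{equation*}
By Theorem~\ref{thm:SsSTFT}(i) applied to $g$, there is $r_1>0$ with $|V_\phi g(x,\xi)| \le p(g)\,(1+|\xi|^2)^{-N_0-n-1}e^{-2r_1|x|^{1/s}}$ for a suitable continuous seminorm $p$ on $S_s$ (indeed $p^\phi_{s,2r_1,N_0+n+1}$); choosing $r = r_1$ in \eqref{eq:STFTdual1} makes the $x$-integral $\int e^{-r_1|x|^{1/s}}dx$ and the $\xi$-integral $\int (1+|\xi|^2)^{-n-1}d\xi$ both finite, giving $|\langle f,g\rangle| \le C'\, p^\phi_{s,r_1,N_0+n+1}(g)$. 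Since $r_1$ is allowed to be arbitrary (we need the hypothesis ``for every $r$'' precisely to run this for every $r_1$), this is exactly the condition characterizing $(S_s)'$, so $f\in (S_s)'$.

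The main obstacle is bookkeeping the quantifiers on $r$ and $N$ correctly: in the ``only if'' direction one must ensure the radius $r$ appearing in \eqref{eq:STFTdual1} can be taken \emph{arbitrary} despite $\phi$ contributing a fixed decay rate $r_0$ — this works because the claim is only ``for every $r$ there exists $N_0$'', so we are free to take $r$ small; and in the ``if'' direction one must match the $N_0$ from the hypothesis against the arbitrary polynomial decay available for $V_\phi g$, and realize that the required $L^1$-convergence in $(x,\xi)$ costs an extra fixed number ($n+1$ in each variable) of powers/exponential margin, which Theorem~\ref{thm:SsSTFT} freely supplies. One should also confirm that the STFT inversion and orthogonality formulas extend from $S_s$ to $(S_s)'$, which is standard given the Gelfand-triple structure invoked after Definition~\ref{def:Sdual}.
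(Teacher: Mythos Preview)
Your proposal is correct, and the ``if'' direction (via the STFT orthogonality/Moyal identity followed by an $L^1$ integral estimate with margins in both variables) is essentially identical to the paper's argument. The ``only if'' direction differs in packaging: the paper tests $f$ against $\phi(\cdot-x)e^{i\langle\cdot,\xi\rangle}$ using the $L^2$-weighted \emph{derivative} seminorms of Proposition~\ref{prop:Ssdual}, expands with the Leibniz rule, and then invokes Proposition~\ref{prop:Ssalt} together with Lemma~\ref{lem:absolutineq}; you test against the same function but measure it with the STFT seminorm $p^\phi_{s,r,N}$, reduce via the covariance $V_\phi(\pi(x,\xi)\phi)(y,\eta)=e^{i\theta}\,V_\phi\phi(y-x,\eta-\xi)$, and then invoke Theorem~\ref{thm:SsSTFT} directly on $V_\phi\phi$. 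Your route is marginally cleaner since it recycles Theorem~\ref{thm:SsSTFT} wholesale instead of redoing a Leibniz computation, but the underlying estimates (Lemma~\ref{lem:absolutineq} for the spatial weight, a Peetre-type bound for the frequency weight) are the same in both.

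One expository point: your closing paragraph justifies the quantifier issue in the ``only if'' direction by saying ``we are free to take $r$ small,'' but the conclusion \eqref{eq:STFTdual1} is required \emph{for every} $r>0$, including large $r$. What you actually need (and what the paper spells out) is that once the bound is established for all sufficiently small $r$, it holds automatically for all $r>0$ by monotonicity of $e^{r|x|^{1/s}}$ in $r$. Your argument is fine; only the explanatory sentence is pointed in the wrong direction.
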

\begin{proof}
Suppose $f\in (S_s)'$, $\phi\in S_s\setminus\{0\}$. Then by Proposition \ref{prop:Ssdual} and Leibniz formula, there is an $N_0>0$ such that
\begin{align*}
    |V_\phi f(x,\xi)| &= |(f,\phi(\cdot-x)e^{i\langle\cdot,\xi\rangle})|  \\
    &\leq C_{N_0} \sum_{|\alpha|\leq N_0} || D^\alpha \left(\phi(\cdot-x)e^{i\langle \cdot,\xi\rangle}\right) e^{r|\cdot|^{1/s}}||_2\\
    &= C_{N_0} \sum_{|\alpha|\leq N_0}\sum_{\gamma\leq\alpha}\binom{\alpha}{\gamma} || (D^\gamma \phi)(\cdot-x) \xi^{\alpha-\gamma} e^{i\langle \cdot,\xi\rangle} e^{r|\cdot|^{1/s}}||_2 \\
    &\leq C'_{N_0} (1+|\xi|^2)^{N_0} \sum_{|\alpha|\leq N_0}\sum_{\gamma\leq\alpha}\binom{\alpha}{\gamma} || (D^\gamma \phi)(\cdot-x) e^{r|\cdot|^{1/s}}||_2
\end{align*}
By Theorem \ref{prop:Ssalt}, there is an $r_0>0$ such that $|D^{\gamma}\phi(y-x)|\leq C_\gamma e^{-r_0|y-x|^{1/s}}$, hence
\begin{align*}
    |V_\phi f(x,\xi)| &\leq C'_{N_0} (1+|\xi|^2)^{N_0} \sum_{|\alpha|\leq N_0}\sum_{\gamma\leq\alpha}\binom{\alpha}{\gamma} || e^{-r_0|\cdot-x|^{1/s}} e^{r|\cdot|^{1/s}}||_2.
\end{align*}
By Lemma \ref{lem:absolutineq}, there is a $c\geq 1$ such that
\begin{equation*}
    - r_0|y-x|^{1/s} \leq   r_0 |x|^{1/s} - r_0/c \cdot |y|^{1/s}.
\end{equation*}
Let $r\in (0, r_0 /(2c))$. Then
\begin{equation*}
    - r_0|y-x|^{1/s} \leq - 2 c r|y-x|^{1/s} \leq  2 c r |x|^{1/s} - 2 r |y|^{1/s},
\end{equation*}
and
\begin{align*}
    |V_\phi f(x,\xi)| &\leq C'_{N_0} (1+|\xi|^2)^{N_0} \sum_{|\alpha|\leq N_0}\sum_{\gamma\leq\alpha}\binom{\alpha}{\gamma} || e^{2cr |x|^{1/s} - 2 r |\cdot|^{1/s} + r|\cdot|^{1/s}}||_2 \\
    &=C'_{N_0} (1+|\xi|^2)^{N_0} e^{2 c r |x|^{1/s}} \sum_{|\alpha|\leq N_0}\sum_{\gamma\leq\alpha}\binom{\alpha}{\gamma} ||  e^{-r|\cdot|^{1/s}}||_2  \\
    &\leq C''_{N_0} (1+|\xi|^2)^{N_0} e^{2 c r |x|^{1/s}}.
\end{align*}
Clearly, the inequality still holds if we let $r>r_0/(2c)$ (the right hand side only becomes larger), hence we have shown that the inequality is valid for all $r>0$, as was to be shown.

Now suppose that for every $r>0$ there is an $N_0 \geq 0$ such that \eqref{eq:STFTdual1} holds. Then by Moyal's identity, for every $\varphi \in S_s $
\begin{equation*}
    |(f,\varphi)|\leq ||\phi||_2^{-2} |(V_\phi f, V_\phi \varphi)|, 
\end{equation*}
hence
\begin{equation*}
    |(f,\varphi)|\leq ||\phi||_2^{-2} \int \int |V_\phi f(x,\xi)|\cdot |V_\phi \varphi(x,\xi)| \, d x \, d \xi.
\end{equation*}
By assumption combined with Theorem \ref{thm:SsSTFT}, for every $r,r_1>0$ and any $N_1\geq 0$ there is an $N_0\geq 0$ such that
\begin{align*}
    |(f,\varphi)|&\leq C_{N_0}||\phi||_2^{-2} \int \int  (1+|\xi|^2)^{N_0} e^{r|x|^{1/s}}  |V_\phi\varphi(x,\xi)| \, d x \, d \xi \\
    &= C_{N_0}||\phi||_2^{-2} \int \int  (1+|\xi|^2)^{(N_0-N_1)} e^{(r-r_1)|x|^{1/s}}  \left|V_\phi\varphi(x,\xi)(1+|\xi|^2)^{N_1} e^{r_1|x|^{1/s}}\right| \, d x \, d \xi \\
    &\leq C_{N_0}||\phi||_2^{-2} p^{\phi,s}_{r_1,N_1}(\varphi)\int \int  (1+|\xi|^2)^{-(N_1-N_0)} e^{-(r_1-r)|x|^{1/s}} \, d x \, d \xi.
\end{align*}
Pick $N_1$ such that $N_1>N_0$ and pick $r$ such that $r<r_1$. Then we obtain
\begin{align*}
    |(f,\varphi)|&\leq   C_{N_0,N_1,r}||\phi||_2^{-2} p^{\phi,s}_{r_1,N_1}(\varphi)
\end{align*}
for all $r_1>r$. By picking $r>0$ arbitrarily small, we thus obtain
\begin{align*}
    |(f,\varphi)|&\leq   C'_{N_1,r_1}\cdot p^{\phi,s}_{r_1,N_1}(\varphi)
\end{align*}
for all $r_1>0$. This completes the proof of (i). The proof of (ii) is very similar, utilizing the fact that $\phi\in S^\sigma$ is equivalent to $\hat{\phi}\in S_\sigma $, and the fact that $V_\phi f(x,\xi) = (\hat{f},\hat{\phi}(\cdot-\xi)e^{-i\langle \cdot,x \rangle})$.    
    
\end{proof}
Lastly we include the corresponding result for the dual spaces of $\Sigma_s$ and $\Sigma^\sigma$, which follows by analogous arguments.
% ------------------------------------
%   Thm: (Sigma_s)' STFT
% ------------------------------------
\begin{thm}
\begin{enumerate}[label=(\roman*)]
    \item Let $\phi\in \Sigma_s(\mathbb{R}^n)\setminus\{0\}$. Then $f\in(\Sigma_s)'(\mathbb{R}^n)$ if and only if there is an $r_0>0$, a $C>0$ and an $N_0\geq 0$ such that
    \begin{equation*}
        |V_\phi f(x,\xi)| \leq C(1+|\xi|^2)^{N_0} e^{r_0|x|^{1/s}}.
    \end{equation*}
     \item Let $\phi\in \Sigma^\sigma(\mathbb{R}^n)\setminus\{0\}$. Then $f\in(\Sigma^\sigma)'(\mathbb{R}^n)$ if and only if for every $r_0>0$ there is an $N_0\geq 0$ such that
    \begin{equation*}
        |V_\phi f(x,\xi)| \leq C(1+|x|^2)^{N_0} e^{r_0|\xi|^{1/\sigma}}.
    \end{equation*}
\end{enumerate}
\end{thm}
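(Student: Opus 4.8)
The plan is to transcribe the proof of Theorem~\ref{thm:SsSTFTdual}, replacing the Roumieu-type quantifier ``for every $r>0$ there is an $N_0$'' by the Beurling-type one ``there is an $r_0>0$ and an $N_0$'', and exploiting that both the test space $\Sigma_s$ and the window $\phi\in\Sigma_s$ satisfy their estimates at \emph{every} rate. As preparation one records the $\Sigma_s$-analogue of the equivalent topology used before Theorem~\ref{thm:SsFT}: by the $\Sigma_s$-analogue of Theorem~\ref{thm:SsSTFT}, the seminorms $p^{\phi}_{s,r,N}(g)=\sup_{x,\xi}|V_\phi g(x,\xi)(1+|\xi|^2)^{N}e^{r|x|^{1/s}}|$, with $r>0$ and $N\geq 0$, form a directed family generating the Fréchet topology of $\Sigma_s$; hence a linear form on $\Sigma_s$ lies in $(\Sigma_s)'$ exactly when it is dominated by a single $p^{\phi}_{s,r_0,N_0}$.

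For the ``only if'' part of (i), let $f\in(\Sigma_s)'$. By Definition~\ref{def:Sigdual}(i) in its $L^2$-variant (valid for $\Sigma_s$ by the remark following Proposition~\ref{prop:Ssdual}), there are $r_1>0$, $N_1\geq 0$ and $C>0$ with $|(f,g)|\leq C\sum_{|\alpha|\leq N_1}\|D^\alpha g\, e^{r_1|\cdot|^{1/s}}\|_2$ for all $g\in\Sigma_s$. Apply this to $g=\phi(\cdot-x)e^{i\langle\cdot,\xi\rangle}$ and expand $D^\alpha g$ by Leibniz's rule: this produces a factor dominated by $(1+|\xi|^2)^{N_1}$ together with translates $(D^\gamma\phi)(\cdot-x)$. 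Since $\phi\in\Sigma_s$, Proposition~\ref{prop:Ssalt}(b) gives $|D^\gamma\phi(y-x)|\leq C_{r,\gamma}e^{-r|y-x|^{1/s}}$ for \emph{every} $r>0$; choosing $r$ large (above $c\,r_1$, with $c$ as in Lemma~\ref{lem:absolutineq}) and using Lemma~\ref{lem:absolutineq} to bound $-r|y-x|^{1/s}+r_1|y|^{1/s}$ by $r'|x|^{1/s}-r''|y|^{1/s}$ with $r''>0$, the $y$-integral converges and one obtains $|V_\phi f(x,\xi)|\leq C'(1+|\xi|^2)^{N_1}e^{r_0|x|^{1/s}}$ for a suitable $r_0$. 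This is the one place where the Beurling hypothesis on $\phi$ is essential: a single fixed decay rate could not absorb the weight $e^{r_1|\cdot|^{1/s}}$ carried by $f$, which is precisely why the conclusion reads ``there is an $r_0$'' rather than ``for every $r_0$''.

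For the ``if'' part of (i), assume $|V_\phi f(x,\xi)|\leq C(1+|\xi|^2)^{N_0}e^{r_0|x|^{1/s}}$. By Moyal's identity (in the Gelfand-triple sense, cf.\ the remarks after Proposition~\ref{prop:Ssdual}), for $\varphi\in\Sigma_s$
\[
|(f,\varphi)|\leq\|\phi\|_2^{-2}\iint|V_\phi f(x,\xi)|\,|V_\phi\varphi(x,\xi)|\,dx\,d\xi .
\]
Inserting the assumed bound on $V_\phi f$ and the estimate $|V_\phi\varphi(x,\xi)|\leq p^{\phi}_{s,r_1,N_1}(\varphi)(1+|\xi|^2)^{-N_1}e^{-r_1|x|^{1/s}}$, then choosing $N_1>N_0+n/2$ and $r_1>r_0$, renders the remaining integral of $(1+|\xi|^2)^{N_0-N_1}e^{(r_0-r_1)|x|^{1/s}}$ finite; thus $|(f,\varphi)|\leq C'p^{\phi}_{s,r_1,N_1}(\varphi)$ for all $\varphi\in\Sigma_s$, which by the first paragraph gives $f\in(\Sigma_s)'$.

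Part (ii) follows from (i) by the Fourier transform, exactly as at the end of the proof of Theorem~\ref{thm:SsSTFT}: Proposition~\ref{prop:FT}(b) makes $\mathscr{F}$ a topological isomorphism $\Sigma^\sigma\to\Sigma_\sigma$ mapping $\Sigma^\sigma\setminus\{0\}$ onto $\Sigma_\sigma\setminus\{0\}$, so by transposition $f\in(\Sigma^\sigma)'$ iff $\hat f\in(\Sigma_\sigma)'$; applying (i) to $\hat f$ with window $\hat\phi$ and rewriting via $V_{\hat\phi}\hat f(x,\xi)=e^{-i\langle x,\xi\rangle}V_\phi f(-\xi,x)$ and a change of variables produces the stated estimate. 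I do not expect a substantially new obstacle here: the whole argument is a line-by-line adaptation of the $S_s$- and $S^\sigma$-cases, and the only genuine point of care is the quantifier bookkeeping — tracking how the single pair $(r_0,N_0)$ threads through both implications, and verifying that the auxiliary choices of $r_1$ and $N_1$ made in the two directions are mutually compatible so that the implications compose to an honest equivalence.
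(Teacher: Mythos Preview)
Your proposal is correct and takes essentially the same approach as the paper, which omits the proof and simply states it ``follows by analogous arguments'' to Theorem~\ref{thm:SsSTFTdual}; your write-up is precisely that analogous argument, with the quantifiers correctly swapped for the Beurling setting and the key observation that the freedom in the decay rate of $\phi\in\Sigma_s$ compensates for the now-fixed weight $e^{r_1|\cdot|^{1/s}}$ coming from the dual. One small remark: your Fourier-transform reduction in (ii) naturally yields the existential form ``there exist $r_0,N_0$'' (mirroring (i)), which is what the duality with a Fr\'echet space should give; the ``for every $r_0$'' in the printed statement of (ii) appears to be a slip, and your argument proves the correct version.
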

% ---------------------------
% Section: Toeplitz
% ---------------------------
\section{Continuity of Toeplitz operators}\label{sec:top}
We will now look at Toeplitz operators on one-parameter Gelfand-Shilov spaces. To analyze these, we will need to consider functions in $2n$ dimensions which belong to different  one-parameter Gelfand-Shilov spaces in different ($n$-dimensional) variables. To make sense of these, we begin by examining the spaces where each variable belongs to a two-parameter Gelfand-Shilov space. These spaces are defined as follows.
    \begin{definition}
     Suppose $s_1,s_2,\sigma_1,\sigma_2 > 0$. Then $S_{s_1,\sigma_2}^{\sigma_1,s_2}(\mathbb{R}^{2 n})$ consists of every $f\in C^{\infty}(\mathbb{R}^{2n})$ for which there is an $h>0$ such that
     \begin{equation*}
         \sup \dfrac{\left| x^{\alpha_1} \xi^{\alpha_2} D_x^{\beta_1} D_\xi^{\beta_2} f(x,\xi) \right|}{h^{|\alpha_1 + \alpha_2 + \beta_1 + \beta_2|} (\alpha_1!)^{s_1} (\alpha_2!)^{\sigma_2} (\beta_1!)^{\sigma_1} (\beta_2!)^{s_2}} < \infty 
     \end{equation*}
    for every $\alpha_1,\alpha_2,\beta_1,\beta_2\in \mathbb{N}^n,$ where the supremum is taken over $x,\xi\in\mathbb{R}^n$.
    \end{definition}
    We can interpret this as a space where functions belong to $S_{s_1}^{\sigma_1}(\mathbb{R}^{n})$ in the $x$-variable and $S_{\sigma_2}^{s_2}(\mathbb{R}^{n})$ in the $\xi$-variable. Note that $S_{s,s}^{\sigma,\sigma}(\mathbb{R}^{2 n}) = S_s^\sigma(\mathbb{R}^{2 n})$. With the notations $S_{s}^\infty = S_s$ and $S_\infty^\sigma = S^\sigma$, we can construct similar spaces where the functions belong to the one-parameter spaces in single variables instead. These are the spaces we will focus on in this section.
    
    \begin{definition}
     Suppose $s,t> 0$. Then $S_{s,\infty}^{\infty,t}(\mathbb{R}^{2 n})$ consists of every $f\in C^\infty(\mathbb{R}^{2 n})$ for which there is an $h>0$ such that
     \begin{equation}\label{eq:doubledef}
         \sup \dfrac{\left| x^{\alpha_1} \xi^{\alpha_2} D_x^{\beta_1} D_\xi^{\beta_2} f(x,\xi) \right|}{h^{|\alpha_1 + \beta_2|}(\alpha_1!)^s (\beta_2!)^t} \leq C_{\beta_1,\alpha_2}
     \end{equation}
     for every $\alpha_1,\alpha_2,\beta_1,\beta_2\in \mathbb{N}^n,$ where the supremum is taken over $x,\xi\in\mathbb{R}^n$ and where $C_{\beta_1,\alpha_2}$ is a constant depending only on $\beta_1$ and $\alpha_2$.
    \end{definition}
    In similar ways with $\sigma,\tau>0$, we let $S_{\infty,\tau}^{\sigma,\infty}(\mathbb{R}^{2 n})$ consist of every $f\in C^\infty(\mathbb{R}^{2 n})$ for which there is an $h>0$ such that
     \begin{equation}\label{eq:doubledef2}
         \sup \dfrac{\left| x^{\alpha_1} \xi^{\alpha_2} D_x^{\beta_1} D_\xi^{\beta_2} f(x,\xi) \right|}{h^{|\beta_1+\alpha_2|}(\beta_1!)^{\sigma}(\alpha_2!)^{\tau}} \leq C_{\alpha_1,\beta_2}
     \end{equation}
     for every $\alpha_1,\alpha_2,\beta_1,\beta_2\in \mathbb{N}^n$. Here, the supremum is taken over $x,\xi\in\mathbb{R}^n$ and $C_{\alpha_1,\beta_2}$ is a constant depending only on $\alpha_1$ and $\beta_2$.
    We also consider the duals of these spaces, which we construct as follows.

    Let $||f||_{h,N,M}$ be the supremum in \eqref{eq:doubledef} taken over $x,\xi\in\mathbb{R}^n$, $\alpha_2,\beta_1\in\mathbb{N}^n$, but only $|\alpha_1|\leq N$ and $|\beta_2|\leq M$. With \[ V_{h,N,M}(\mathbb{R}^{2 n}) = \{f\in C^\infty(\mathbb{R}^{2 n}): ||f||_{h,N,M}<\infty \},\] we observe that
    \begin{equation}
        S_{s,\infty}^{\infty,t}(\mathbb{R}^{2 n}) = \indlim_{h>0} \left(\projlim_{N,M\geq 0} V_{h,N,M}(\mathbb{R}^{2 n}) \right).
    \end{equation}
    It is therefore natural to set
    \begin{equation}
        (S_{s,\infty}^{\infty,t})'(\mathbb{R}^{2 n}) = \projlim_{h>0} \left(\indlim_{N,M\geq 0} (V_{h,N,M})'(\mathbb{R}^{2 n}) \right).
    \end{equation}
    We construct the space $(S_{\infty,\tau}^{\sigma,\infty})'(\mathbb{R}^{2 n})$ analogously.
    
    Similar to the characterizations of $S_s$ and $S^\sigma$ via Fourier transform in Theorem \ref{thm:SsFT}, we can characterize the double spaces $S_{s,\infty}^{\infty,t}$ and $S_{\infty,\tau}^{\sigma,\infty}$ as follows.
    
    \begin{prop}\label{prop:doublespace}
    Suppose $s,t,\sigma,\tau>0$ and $f\in C^{\infty}(\mathbb{R}^{2 n})$. 
    \begin{enumerate}[label=(\alph*)]
        \item $f\in S_{s,\infty}^{\infty,t}(\mathbb{R}^{2 n})$ if and only if there is an $r>0$ such that
        \begin{equation}
            |f(x,\xi)|\leq C_N (1+|\xi|^2)^{-N} e^{-r |x|^{1/s}}, \quad |\hat{f}(\eta,y)|\leq (1+|\eta|^2)^{-N} e^{- r|y|^{1/t}}
        \end{equation}
        for every $N\geq 0$.
        \item $f\in S_{\infty,\tau}^{\sigma,\infty}(\mathbb{R}^{2 n})$ if and only if there is an $r>0$ such that
        \begin{equation}
            |f(x,\xi)|\leq C_N (1+|x|^2)^{-N} e^{-r |\xi|^{1/\tau}}, \quad |\hat{f}(\eta,y)|\leq (1+|y|^2)^{-N} e^{- r|\eta|^{1/\sigma}}
        \end{equation}
        for every $N\geq 0$.
    \end{enumerate}
    \end{prop}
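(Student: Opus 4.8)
The plan is to reduce (b) to (a) and then to prove (a) along the lines of the proof of Theorem~\ref{thm:SsFT}, passing through the short-time Fourier transform. For the reduction, the coordinate interchange $g(x,\xi):=f(\xi,x)$ identifies $S_{\infty,\tau}^{\sigma,\infty}(\mathbb{R}^{2n})$ with $S_{\tau,\infty}^{\infty,\sigma}(\mathbb{R}^{2n})$ (compare \eqref{eq:doubledef} and \eqref{eq:doubledef2}) and satisfies $\hat g(\eta,y)=\hat f(y,\eta)$; applying part~(a) to $g$ with the parameters $s=\tau$, $t=\sigma$ and relabelling the variables then gives precisely the estimates of~(b). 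So it suffices to prove~(a).

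The central step is a two-variable analogue of Theorem~\ref{thm:SsSTFT}. Fix $\phi_1\in S_s(\mathbb{R}^n)\setminus\{0\}$ and $\phi_2\in S^t(\mathbb{R}^n)\setminus\{0\}$ (these exist, since $S_s\supseteq C_c^\infty(\mathbb{R}^n)$ and $S^t=\mathscr{F}S_t$ by Proposition~\ref{prop:FT}) and put $\Phi=\phi_1\otimes\phi_2\in S_{s,\infty}^{\infty,t}(\mathbb{R}^{2n})$. I claim that $f\in S_{s,\infty}^{\infty,t}(\mathbb{R}^{2n})$ if and only if there is an $r>0$ with
\[
|V_\Phi f((x,\xi),(\eta,y))|\le C_N(1+|\xi|^2)^{-N}(1+|\eta|^2)^{-N}e^{-r|x|^{1/s}}e^{-r|y|^{1/t}}\qquad(N\ge 0),
\]
where $(\eta,y)\in\mathbb{R}^{2n}$ is the frequency variable dual to $(x,\xi)$. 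I would prove this by iterating Theorem~\ref{thm:SsSTFT}, using that, since $\Phi$ is a tensor product, the $2n$-dimensional transform factors as $V_\Phi f((x,\xi),(\eta,y))=V_{\phi_2}\bigl[\,v\mapsto V_{\phi_1}f(\cdot,v)(x,\eta)\,\bigr](\xi,y)$. For the direct implication: \eqref{eq:doubledef} shows that, for each fixed $v$, the function $u\mapsto v^{\alpha_2}D_v^{\beta_2}f(u,v)$ lies in $S_s(\mathbb{R}^n)$ with its defining constants bounded (uniformly in $v$) by a multiple of $h^{|\beta_2|}(\beta_2!)^t$; so Theorem~\ref{thm:SsSTFT}(i) applied in $u$, together with $v^{\alpha_2}D_v^{\beta_2}[V_{\phi_1}f(\cdot,v)(x,\eta)]=V_{\phi_1}[v^{\alpha_2}D_v^{\beta_2}f(\cdot,v)](x,\eta)$, shows that $v\mapsto V_{\phi_1}f(\cdot,v)(x,\eta)$ lies in $S^t(\mathbb{R}^n)$ with its defining constants bounded by $C_M(1+|\eta|^2)^{-M}e^{-r|x|^{1/s}}$ for every $M$; applying Theorem~\ref{thm:SsSTFT}(ii) to it and taking $M=N$ yields the stated estimate. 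The converse runs the same chain backwards, using the converse halves of Theorem~\ref{thm:SsSTFT}(ii) and then (i) to recover first that each $v\mapsto V_{\phi_1}f(\cdot,v)(x,\eta)$ is in $S^t$ with those quantitative bounds and then that each $u\mapsto v^{\alpha_2}D_v^{\beta_2}f(u,v)$ is in $S_s$ with the constants required by \eqref{eq:doubledef}. The main obstacle here is exactly this bookkeeping: one must check that the $S_s$- and $S^t$-constants produced at each stage carry the decay factors $(1+|\eta|^2)^{-M}e^{-r|x|^{1/s}}$ in $(x,\eta)$ and $(1+|\xi|^2)^{-N}e^{-r|y|^{1/t}}$ in $(\xi,y)$, which requires the proof of Theorem~\ref{thm:SsSTFT} to be quantitative in the norms of the function and of the window.

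Granting this characterization, (a) follows as Theorem~\ref{thm:SsFT} follows from Theorem~\ref{thm:SsSTFT}. If $f\in S_{s,\infty}^{\infty,t}$, integrating the STFT estimate over $(\eta,y)\in\mathbb{R}^{2n}$ and using the reconstruction inequality $\iint|V_\Phi f((x,\xi),(\eta,y))|\,d\eta\,dy\ge C_\Phi|f(x,\xi)|$ gives $|f(x,\xi)|\le C_N(1+|\xi|^2)^{-N}e^{-r|x|^{1/s}}$ for every $N$ (the $(\eta,y)$-integral of the majorant converging once $N$ is large), and integrating over $(x,\xi)$ and using $\iint|V_\Phi f((x,\xi),(\eta,y))|\,dx\,d\xi\ge C_\Phi|\hat f(\eta,y)|$ gives $|\hat f(\eta,y)|\le C_N(1+|\eta|^2)^{-N}e^{-r|y|^{1/t}}$. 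Conversely, if both decay estimates hold, one majorizes $|V_\Phi f|$ first by $\iint|f(u,v)|\,|\phi_1(u-x)|\,|\phi_2(v-\xi)|\,du\,dv$ and then by the corresponding integral in terms of $\hat f$ and $\widehat\Phi=\hat\phi_1\otimes\hat\phi_2$, and applies Lemma~\ref{lem:absolutineq} to the sub-exponential scales and Lemma~\ref{lem:absolutineq3} to the polynomial scales, exactly as in the converse part of Theorem~\ref{thm:SsFT}(a), to get $|V_\Phi f|^2\le C_N(1+|\xi|^2)^{-N}(1+|\eta|^2)^{-N}e^{-r|x|^{1/s}}e^{-r|y|^{1/t}}$ for every $N$; hence the STFT estimate holds and $f\in S_{s,\infty}^{\infty,t}$. (The direct implication of (a) also has a shorter, STFT-free proof: putting $\beta_1=\beta_2=0$ in \eqref{eq:doubledef} and applying Proposition~\ref{prop:Ssalt} fibrewise in $x$ give $|\xi^{\alpha_2}f(x,\xi)|\le C_{\alpha_2}e^{-r|x|^{1/s}}$, whence the first estimate after summing over $|\alpha_2|\le 2N$; and since each $x^{\alpha_1}\xi^{\alpha_2}D_x^{\beta_1}D_\xi^{\beta_2}f$ lies in $L^1(\mathbb{R}^{2n})$ with norm bounded by finitely many terms of the right side of \eqref{eq:doubledef}, the relation $\mathscr{F}[x^{\alpha_1}\xi^{\alpha_2}D_x^{\beta_1}D_\xi^{\beta_2}f]=\pm D_\eta^{\alpha_1}D_y^{\alpha_2}(\eta^{\beta_1}y^{\beta_2}\hat f)$ with $\|\mathscr{F}g\|_\infty\le(2\pi)^{-n}\|g\|_1$ and a Leibniz rearrangement transfer \eqref{eq:doubledef} to the analogous estimate for $\hat f$, from which Proposition~\ref{prop:Ssalt} fibrewise in $y$ gives the second estimate. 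The converse, however, seems to require the short-time Fourier transform, as no derivative control on $\hat f$ is available a priori.)
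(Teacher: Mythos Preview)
Your proposal is correct and is essentially the approach the paper intends: the paper's proof is the single line ``This result follows directly from Theorem~\ref{thm:SsFT},'' which is to be read as ``repeat the argument of Theorem~\ref{thm:SsFT} (and hence implicitly of Theorem~\ref{thm:SsSTFT}) in the two-variable setting,'' and that is exactly what you carry out. Your reduction of (b) to (a), your tensor-window STFT characterization, and your concluding integration argument are the natural two-variable adaptations of those proofs; the additional STFT-free proof you sketch for the direct implication is a pleasant bonus but not needed.
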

    \begin{proof}
    This result follows directly from Theorem \ref{thm:SsFT}.
    \end{proof}
    With this in mind we now consider Toeplitz operators on one-parameter Gelfand-Shilov spaces.
    \begin{definition}\label{def:top}
    Let $s,\sigma>0$, $\phi_1,\phi_2 \in S_s(\mathbb{R}^n)$ and $a\in S_{s,\infty}^{\infty,s}(\mathbb{R}^{2 n})$. 
    The Toeplitz operator $Tp_{\phi_1,\phi_2}(a)$ is given by
     \begin{equation}\label{eq:top}
         (Tp_{\phi_1,\phi_2}(a) f, g)_{L^2(\mathbb{R}^{2 n})} = (a, \overline{V_{\phi_1} f}\cdot V_{\phi_2} g)_{L^2(\mathbb{R}^{2 n})}
     \end{equation}
     for every $f\in S_s(\mathbb{R}^n)$ and $g\in (S_s)'(\mathbb{R}^n)$.
     
     If instead $\phi_1,\phi_2\in S^\sigma(\mathbb{R}^n)$ and $a\in S^{\sigma,\infty}_{\infty,\sigma}(\mathbb{R}^{2 n})$, the Toeplitz operator is given by \eqref{eq:top} for every $f\in S^\sigma(\mathbb{R}^n)$ and $g\in (S^\sigma)'(\mathbb{R}^n)$.
    \end{definition}
    We observe that the Toeplitz operator in \eqref{eq:top} can be expressed as $$ Tp_{\phi_1,\phi_2}(a)f = V_{\phi_2}^*(a \cdot V_{\phi_1} f)$$
    and that this is a continuous operator from $S_s(\mathbb{R}^{n})$ to $S_s(\mathbb{R}^{n})$ when $a\in S_{s,\infty}^{\infty,s}(\mathbb{R}^{2 n})$, and from $S^\sigma(\mathbb{R}^{n})$ to $S^\sigma(\mathbb{R}^{n})$ when $a\in S_{\infty,\sigma}^{\sigma,\infty}(\mathbb{R}^{2 n})$. We now want to show that we can loosen the restriction on $a$ to instead be in the duals of $S_{s,\infty}^{\infty,s}(\mathbb{R}^{2 n})$ and $S_{\infty,\sigma}^{\sigma,\infty}(\mathbb{R}^{2 n})$. To do this, we need the following lemma.
    \begin{lemma}\label{lem:doubleFT}
    Let $\phi_1,\phi_2,f \in S_s(\mathbb{R}^n)$ and $g \in (S_s)'(\mathbb{R}^n)$. Then
    \begin{equation*}
        \mathscr{F} [\overline{V_{\phi_1} f} \cdot V_{\phi_2} g] (\eta,y) = e^{i y \eta} V_{\phi_2} \phi_1 (y,-\eta) \cdot   V_f g(-y,\eta)
    \end{equation*}
    \end{lemma}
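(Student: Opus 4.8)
The plan is to compute the Fourier transform directly from the definition of the short-time Fourier transform and unravel the convolution structure. Write $F(w) = \overline{V_{\phi_1}f(w)}\cdot V_{\phi_2}g(w)$ for $w=(x,\xi)\in\mathbb{R}^{2n}$, so that we must evaluate $\widehat{F}(\eta,y)$. Recall that $V_{\phi_2}g(x,\xi) = (2\pi)^{-n/2}(g,\phi_2(\cdot-x)e^{i\langle\cdot,\xi\rangle})$ and that $\overline{V_{\phi_1}f(x,\xi)} = (2\pi)^{-n/2}\int \overline{f(t)}\phi_1(t-x)e^{i\langle t,\xi\rangle}\,dt$. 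The first step is to substitute these integral/pairing expressions into $\widehat{F}(\eta,y) = (2\pi)^{-n}\iint F(x,\xi)e^{-i\langle x,\eta\rangle - i\langle\xi,y\rangle}\,dx\,d\xi$, and then to carry out the $\xi$-integration first. The $\xi$-dependence of $F$ is entirely of the form $e^{i\langle t,\xi\rangle}$ (from $\overline{V_{\phi_1}f}$) times $e^{i\langle\cdot,\xi\rangle}$ inside the $g$-pairing, so integrating against $e^{-i\langle\xi,y\rangle}$ produces a delta-type constraint; concretely the $\xi$-integral turns the pairing with $g$ into an evaluation that forces the spectral variable of $g$, producing $V_f g$ evaluated at a shifted point.

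The key steps, in order: (1) expand both factors as above and assemble the double integral for $\widehat F$; (2) perform the $\xi$-integral, which collapses one exponential and yields a factor involving $V_fg$ with arguments $(-y,\eta)$ up to the phase $e^{iy\eta}$ — this is where the twisted-convolution phase $e^{iy\eta}$ is generated; (3) perform the $x$-integral, which acts on the remaining $\phi_1,\phi_2$ factors and, after recognizing $\mathscr{F}[\phi_2(\cdot-x)\overline{\cdots}]$-type terms, produces $V_{\phi_2}\phi_1(y,-\eta)$; (4) collect the factors and check the phases and signs match the claimed identity $e^{iy\eta}V_{\phi_2}\phi_1(y,-\eta)\cdot V_fg(-y,\eta)$. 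All the pairings with $g\in(S_s)'$ are justified because $\phi_1,\phi_2,f\in S_s$ guarantee the relevant functions lie in $S_s$ (so $V_{\phi_2}g$ and $V_fg$ are well-defined and have the growth from Theorem \ref{thm:SsSTFTdual}), and Fubini is legitimate by the rapid decay/polynomial growth estimates from Theorems \ref{thm:SsSTFT} and \ref{thm:SsSTFTdual}.

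The main obstacle I expect is bookkeeping rather than anything conceptual: keeping track of which variable is the time-shift and which is the frequency-shift in each STFT, and getting all the signs in the exponentials and the arguments $(y,-\eta)$, $(-y,\eta)$ correct — these ``twisted'' identities are notorious for sign errors. A secondary point requiring a little care is interchanging the distributional pairing $\langle g,\cdot\rangle$ with the $x$- and $\xi$-integrations; this should be handled by noting that the map $(x,\xi)\mapsto \phi_2(\cdot-x)e^{i\langle\cdot,\xi\rangle}$ is a smooth $S_s$-valued function with the appropriate integrability after pairing, so continuity of $g$ lets one move it through the integrals. Since the excerpt explicitly says this lemma ``follows by basic computations'' in the same spirit as Lemma \ref{lem:twistconv}, I would present it as: expand, integrate in $\xi$, integrate in $x$, identify the two STFTs, and match phases — leaving the routine verification of signs to the reader.
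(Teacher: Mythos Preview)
Your proposal is correct and follows essentially the same route as the paper: expand both short-time Fourier transforms, carry out the $\xi$-integration first (the paper phrases this as ``apply the Fourier inversion theorem,'' which collapses the quadruple integral to a double one), then perform a few changes of variable in the remaining $(x,w)$-integral to split it into the product $V_{\phi_2}\phi_1(y,-\eta)\cdot V_f g(-y,\eta)$ with the leftover phase. Your anticipated obstacles---sign bookkeeping and justifying the interchange of the $g$-pairing with the integrals---are exactly the only points requiring care, and the paper handles the former by writing out three explicit substitutions ($s=w+y$, $t=s-x$, $z=s-y$) while leaving the latter implicit.
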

    \begin{proof}
    We have
    \begin{align*}
      \mathscr{F} [\overline{V_{\phi_1} f} \cdot V_{\phi_2} g] (\eta,y) &= (2\pi)^{-2 n} \iiiint \overline{f(z)} \phi_1(z-x) g(w) \overline{\phi_2(w-x)} e^{i(z-w-y)\xi - i x\eta} \, d z d w d x d \xi \\ 
    &= (2\pi)^{- n} \iint \overline{f(w+y)} \phi_1(w+y-x) g(w) \overline{\phi_2(w-x)} e^{- i x\eta} \, d w \, d x \\
    &= (2\pi)^{- n} \iint \overline{f(s)} \phi_1(s-x) g(s-y) \overline{\phi_2(s-y-x)} e^{- i x\eta} \, d s \, d x \\
    &= (2\pi)^{- n} \iint \overline{f(s)} \phi_1(t) g(s-y) \overline{\phi_2(t-y)} e^{- i (s-t)\eta} \, d s \, d t \\
    &= (2\pi)^{- n} \iint \overline{f(z+y)} \phi_1(t) g(z) \overline{\phi_2(t-y)} e^{- i (z+y-t)\eta} \, d z \, d t \\
    &= e^{-i y \eta} V_{\phi_2} \phi_1 (y,-\eta) \cdot   V_f g(-y,\eta) \\
    \end{align*}
    where we apply the Fourier inversion theorem in the second step, apply the variable substitution $s = w + y$ in the third step, $t = s - x$ in the fourth step and $z = s - y$ in the fifth step.
    \end{proof}
    With this lemma in mind we move on to the main result of this section.
    \begin{thm}
    Suppose $\phi_1,\phi_2 \in S_s(\mathbb{R}^n)$. Then the following is true.
    \begin{enumerate}[label=(\alph*)]
        \item The definition of $Tp_{\phi_1,\phi_2}(a)$ is uniquely extendable to any $a\in (S_{s,\infty}^{\infty,s})'(\mathbb{R}^{2 n})$ and is then continuous from $S_s(\mathbb{R}^n)$ to $(S_s)'(\mathbb{R}^n)$.
        \item If $a\in (S_{s,\infty}^{\infty,s})'(\mathbb{R}^{2 n})$ then $Tp_{\phi_1,\phi_2}(a)$ is continuous on $S_s(\mathbb{R}^n)$ and uniquely extendable to a continuous operator on $(S_s)'(\mathbb{R}^n)$.
    \end{enumerate}
    \end{thm}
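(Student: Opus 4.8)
The plan is to reduce both parts to a single time-frequency fact and then to argue by duality; one may assume $\phi_1,\phi_2\neq 0$, since otherwise $Tp_{\phi_1,\phi_2}(a)=0$ and there is nothing to prove. The key preliminary step is the lemma: for $f,g\in S_s(\mathbb{R}^n)$ the product $\overline{V_{\phi_1}f}\cdot V_{\phi_2}g$ belongs to $S_{s,\infty}^{\infty,s}(\mathbb{R}^{2n})$, the resulting bilinear map $S_s\times S_s\to S_{s,\infty}^{\infty,s}$ is continuous, and $f\mapsto V_{\phi_1}f$ maps $S_s$ continuously into $S_{s,\infty}^{\infty,s}$. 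By Proposition \ref{prop:doublespace}(a) it suffices to produce the two pointwise bounds listed there. The bound on $\overline{V_{\phi_1}f}\cdot V_{\phi_2}g$ itself is obtained by multiplying the two estimates of Theorem \ref{thm:SsSTFT}(i): the polynomial decays in $\xi$ multiply and the exponential decays in $x$ combine. For the Fourier side, Lemma \ref{lem:doubleFT} rewrites $\mathscr{F}[\overline{V_{\phi_1}f}\cdot V_{\phi_2}g](\eta,y)$ as $e^{i\langle y,\eta\rangle}V_{\phi_2}\phi_1(y,-\eta)\,V_f g(-y,\eta)$, and Theorem \ref{thm:SsSTFT}(i) applied with the windows $\phi_2$ and $f$ gives arbitrary polynomial decay in $\eta$ together with exponential decay in $y$; tracking the constants (finitely many seminorms of $f,g,\phi_1,\phi_2$) yields the asserted continuity. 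The statement for $V_{\phi_1}f$ alone is the same argument, using instead that $\mathscr{F}[V_{\phi_1}f](\eta,y)$ equals $f(-y)\,\overline{\hat{\phi}_1(\eta)}$ up to a constant and a unimodular factor, and invoking Theorem \ref{thm:SsFT}(a) and Proposition \ref{prop:FT}.

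For part (a) I would set $(Tp_{\phi_1,\phi_2}(a)f,g):=(a,\,\overline{V_{\phi_1}f}\cdot V_{\phi_2}g)$ for $a\in(S_{s,\infty}^{\infty,s})'(\mathbb{R}^{2n})$ and $f,g\in S_s(\mathbb{R}^n)$. By the lemma the right-hand side is a legitimate dual pairing, and for $a\in S_{s,\infty}^{\infty,s}$ it reduces to \eqref{eq:top}, so this is an extension of the original operator. Fixing $f$, the map $g\mapsto(a,\,\overline{V_{\phi_1}f}\cdot V_{\phi_2}g)$ is conjugate-linear and, by the lemma and the continuity of $a$, continuous on $S_s$; since $(S_s,L^2,(S_s)')$ is a Gelfand triple, this realizes $Tp_{\phi_1,\phi_2}(a)f$ as an element of $(S_s)'$, and the joint continuity from the lemma makes $f\mapsto Tp_{\phi_1,\phi_2}(a)f$ continuous from $S_s$ into $(S_s)'$. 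Uniqueness of the extension holds because the formula is forced on the weak-$*$ dense subspace $S_{s,\infty}^{\infty,s}\subseteq(S_{s,\infty}^{\infty,s})'$ and depends weak-$*$ continuously on $a$.

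For part (b) the work is to improve the target from $(S_s)'$ to $S_s$. Writing $Tp_{\phi_1,\phi_2}(a)f=V_{\phi_2}^*(a\cdot V_{\phi_1}f)$ and using the reproducing formula for $V_{\phi_2}^*$ (in the spirit of Lemma \ref{lem:twistconv}), one obtains, for any fixed window $\psi\in S_s(\mathbb{R}^n)\setminus\{0\}$, an identity $V_\psi[Tp_{\phi_1,\phi_2}(a)f](x,\xi)=(a,\,\Xi_{f,x,\xi})$, where $\Xi_{f,x,\xi}$ is the pointwise product of $\overline{V_{\phi_1}f}$ with a translated and modulated copy of $V_\psi\phi_2$, hence lies in $S_{s,\infty}^{\infty,s}$ by the lemma and the stability of that space under products, translations and unimodular factors. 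Bounding $(a,\,\cdot\,)$ by one of its continuity seminorms on $S_{s,\infty}^{\infty,s}$ — taken at an inductive-limit step whose weight is weaker than the decay already enjoyed by $V_{\phi_1}f$ (possible since that decay is fixed once $f$ is) — and applying Lemmas \ref{lem:absolutineq} and \ref{lem:absolutineq3} to the resulting two-factor expressions, one gets $|V_\psi[Tp_{\phi_1,\phi_2}(a)f](x,\xi)|\leq C_N(1+|\xi|^2)^{-N}e^{-r|x|^{1/s}}$ for every $N$. By Theorem \ref{thm:SsSTFT}(i) this gives $Tp_{\phi_1,\phi_2}(a)f\in S_s$, and since the estimate is uniform on bounded subsets of $S_s$, the operator is continuous on $S_s$. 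The extension to $(S_s)'$ follows by transposition: from \eqref{eq:top} one reads off $Tp_{\phi_1,\phi_2}(a)^*=Tp_{\phi_2,\phi_1}(\overline a)$, and since $\overline a\in(S_{s,\infty}^{\infty,s})'$ the operator $Tp_{\phi_2,\phi_1}(\overline a)$ is continuous on $S_s$ by what was just shown; its transpose is then a continuous operator on $(S_s)'$ whose restriction to $S_s$ is $Tp_{\phi_1,\phi_2}(a)$, and it is the unique such extension.

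The main obstacle is this last improvement in (b): seeing that the synthesis $V_{\phi_2}^*(a\cdot V_{\phi_1}f)$ actually lands in $S_s$ and not merely in $(S_s)'$. This is where the choice of the continuity seminorm of $a$ relative to the fixed decay rate of $V_{\phi_1}f$ is essential, and where Lemmas \ref{lem:absolutineq} and \ref{lem:absolutineq3} do the real work of collapsing the two-factor estimates into the single exponential-times-polynomial bound demanded by Theorem \ref{thm:SsSTFT}(i).
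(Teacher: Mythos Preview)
Your argument for (a) is essentially the paper's, with a minor simplification: you pair against $g\in S_s$ rather than $g\in(S_s)'$, which suffices for the conclusion $Tp(a):S_s\to(S_s)'$.

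For (b), however, you take a genuinely different route. The paper's proof does not compute $V_\psi[Tp(a)f]$ at all; instead it invokes Theorem~\ref{thm:SsSTFTdual} (the STFT growth characterization of the \emph{dual} spaces) to show that the product $H=\overline{V_{\phi_1}f}\cdot V_{\phi_2}g$ still lies in $S_{s,\infty}^{\infty,s}(\mathbb{R}^{2n})$ even when one of $f,g$ is only in $(S_s)'$. Concretely, multiplying the decay estimate of Theorem~\ref{thm:SsSTFT} against the growth estimate of Theorem~\ref{thm:SsSTFTdual} yields the pointwise bound in Proposition~\ref{prop:doublespace}(a), and Lemma~\ref{lem:doubleFT} plus the same two theorems handle the Fourier side. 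Once this is in place, (b) follows from (a) by simply swapping the roles of $f$ and $g$: one now allows $f\in(S_s)'$ and $g\in S_s$. Your approach---estimating $V_\psi[Tp(a)f](x,\xi)=(a,\Xi_{f,x,\xi})$ directly and then transposing---is correct in outline and has the merit of making the membership $Tp(a)f\in S_s$ explicit via Theorem~\ref{thm:SsSTFT}, but it is considerably heavier: you must track the $(x,\xi)$-dependence of the $S_{s,\infty}^{\infty,s}$-seminorms of $\Xi_{f,x,\xi}$ (through both the direct and the Fourier bounds of Proposition~\ref{prop:doublespace}), and match an inductive-limit step for $a$ against the fixed decay rate of $V_{\phi_1}f$. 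The paper's route sidesteps all of this: Theorem~\ref{thm:SsSTFTdual} is exactly the tool that makes the ``mixed'' case $f\in S_s$, $g\in(S_s)'$ no harder than the ``pure'' case $f,g\in S_s$, so (a) and (b) become a single computation.
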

    \begin{proof}
    For (a), it is sufficient to show that $H = \overline{V_{\phi_1} f}\cdot V_{\phi_2} g \in S_{s,\infty}^{\infty,s}(\mathbb{R}^{2 n})$ whenever $f\in S_s(\mathbb{R}^n)$ and $g\in (S_s)'(\mathbb{R}^n)$ and for (b), the same statement is sufficient but with $f\in (S_s)'(\mathbb{R}^n)$ and $g\in S_s(\mathbb{R}^n)$. 
    
    We begin by proving (a). By Theorem \ref{thm:SsSTFT} and Theorem \ref{thm:SsSTFTdual}, for every $N\geq 0$ and every $r>0$, there are $r_0 > 0$, $N_0 \geq 0$ and $C_{N,r} > 0$ such that
    \begin{align*}
        |H(x,\xi)| &= (2\pi)^{-n} | V_{\phi_1} f (x,\xi)|\cdot |V_{\phi_2}g(x,\xi)| \\
        &\leq C_{N,r} (1 + |\xi|^2)^{-(N-N_0)} e^{-(r_0-r) |x|^{1/s}}.
    \end{align*}
    Picking $r<r_0$ and noting that $N_0 \geq 0$ gives the first inequality of Proposition \ref{prop:doublespace}.
    
    By Lemma \ref{lem:doubleFT},
    \begin{align*}
       |\hat{H}(\eta,y)| &= (2\pi)^{-n} | V_{\phi_2} \phi_1 (y,-\eta) \cdot V_f g(-y,\eta)|.
    \end{align*}
    Applying Theorem \ref{thm:SsSTFT} and Theorem \ref{thm:SsSTFTdual} exactly as before, we now obtain the second inequality of Proposition \ref{prop:doublespace}. This completes the proof of (a). To prove (b), simply reverse the roles of $f$ and $g$ and the result follows.
    
    \end{proof}
    We also state the corresponding result for $a\in (S_{\infty,\sigma}^{\sigma,\infty})'(\mathbb{R}^{2 n})$, which follows by similar arguments.
    \begin{thm}
    Suppose $\phi_1,\phi_2 \in S^\sigma(\mathbb{R}^n)$. Then the following is true.
    \begin{enumerate}[label=(\alph*)]
        \item The definition of $Tp_{\phi_1,\phi_2}(a)$ is uniquely extendable to any $a\in (S_{\infty,\sigma}^{\sigma,\infty})'(\mathbb{R}^{2 n})$ and is then continuous from $S^\sigma(\mathbb{R}^n)$ to $(S^\sigma)'(\mathbb{R}^n)$.
        \item If $a\in (S_{\infty,\sigma}^{\sigma,\infty})'(\mathbb{R}^{2 n})$ then $Tp_{\phi_1,\phi_2}(a)$ is continuous on $S^\sigma(\mathbb{R}^n)$ and uniquely extendable to a continuous operator on $(S^\sigma)'(\mathbb{R}^n)$.
    \end{enumerate}
    \end{thm}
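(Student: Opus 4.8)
The plan is to follow the proof of the preceding theorem verbatim, replacing each ingredient by its $S^\sigma$-counterpart. As there, the entire content is the assertion that the function
\[
H = \overline{V_{\phi_1} f}\cdot V_{\phi_2} g
\]
belongs to $S_{\infty,\sigma}^{\sigma,\infty}(\mathbb{R}^{2n})$ whenever $f\in S^\sigma(\mathbb{R}^n)$ and $g\in (S^\sigma)'(\mathbb{R}^n)$ (for part (a)), and with the roles of $f$ and $g$ interchanged (for part (b)). Granting this, the right-hand side of \eqref{eq:top} makes sense for $a\in (S_{\infty,\sigma}^{\sigma,\infty})'(\mathbb{R}^{2n})$, interpreted as the action of $a$ on $H$, and this is the (unique) extension of the original definition; the stated continuity is then read off from the seminorm bounds on $H$, and the extension of $Tp_{\phi_1,\phi_2}(a)$ to a continuous operator on $(S^\sigma)'(\mathbb{R}^n)$ in (b) is unique by density of $S^\sigma$ in $(S^\sigma)'$.

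First I would bound $|H(x,\xi)|$. Since $\phi_1,f\in S^\sigma$, Theorem~\ref{thm:SsSTFT}(ii) supplies a fixed $r>0$ with $|V_{\phi_1}f(x,\xi)|\leq C_N(1+|x|^2)^{-N}e^{-r|\xi|^{1/\sigma}}$ for every $N\geq 0$; since $g\in (S^\sigma)'$, Theorem~\ref{thm:SsSTFTdual}(ii) supplies, for every $r'>0$, an $N_0\geq 0$ with $|V_{\phi_2}g(x,\xi)|\leq C_{r'}(1+|x|^2)^{N_0}e^{r'|\xi|^{1/\sigma}}$. Multiplying and choosing $r'<r$ gives $|H(x,\xi)|\leq C(1+|x|^2)^{-(N-N_0)}e^{-(r-r')|\xi|^{1/\sigma}}$ for all $N$, which is the first estimate of Proposition~\ref{prop:doublespace}(b).

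Next I would treat $\widehat{H}$. The identity of Lemma~\ref{lem:doubleFT}, namely $\mathscr{F}[\overline{V_{\phi_1}f}\cdot V_{\phi_2}g](\eta,y)= e^{iy\eta}V_{\phi_2}\phi_1(y,-\eta)\cdot V_f g(-y,\eta)$, is a purely formal computation (the substitutions $s=w+y$, $t=s-x$, $z=s-y$ and Fourier inversion) that does not use which Gelfand--Shilov class the functions belong to, so it applies here with $\phi_1,\phi_2,f\in S^\sigma$ and $g\in(S^\sigma)'$. Applying Theorem~\ref{thm:SsSTFT}(ii) to $V_{\phi_2}\phi_1$ (both $\phi_1,\phi_2\in S^\sigma$) and Theorem~\ref{thm:SsSTFTdual}(ii) to $V_f g$, exactly as in the estimate of $|H|$, yields the second estimate of Proposition~\ref{prop:doublespace}(b); hence $H\in S_{\infty,\sigma}^{\sigma,\infty}(\mathbb{R}^{2n})$. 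For (b) the only change is that now $f\in(S^\sigma)'$ and $g\in S^\sigma$, so the two factors merely swap which of Theorem~\ref{thm:SsSTFT}(ii)/Theorem~\ref{thm:SsSTFTdual}(ii) is applied to them; the resulting bounds are the same and the same conclusion follows.

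I expect the main obstacle to be bookkeeping rather than substance: relative to the $S_s$-case the roles of the two variables are exchanged, so that sub-exponential decay now sits in the frequency variable $\xi$ (and in $\eta$ on the Fourier side) while polynomial decay of arbitrary order sits in $x$ (and in $y$). One must therefore take care, in both $H$ and $\widehat{H}$, that the factor with polynomial growth coming from the dual element is absorbed into the factor with polynomial decay of any order coming from the test function, and that the competing exponentials $e^{r'|\xi|^{1/\sigma}}$ and $e^{-r|\xi|^{1/\sigma}}$ are reconciled by choosing $r'<r$. A secondary point is to confirm that Proposition~\ref{prop:doublespace}(b) is exactly the characterization matching the seminorms in the definition of $S_{\infty,\tau}^{\sigma,\infty}$ with $\tau=\sigma$, so that membership of $H$ in that space translates directly into the required continuity of $Tp_{\phi_1,\phi_2}(a)$.
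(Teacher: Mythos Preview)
Your proposal is correct and matches the paper's approach exactly: the paper states that this theorem ``follows by similar arguments'' to the $S_s$ case, and you have carried out precisely that transference, using Theorem~\ref{thm:SsSTFT}(ii), Theorem~\ref{thm:SsSTFTdual}(ii), Lemma~\ref{lem:doubleFT}, and Proposition~\ref{prop:doublespace}(b) in place of their $S_s$ analogues. Your bookkeeping of which variable carries the sub-exponential decay and which carries the polynomial decay is accurate, and your observation that Lemma~\ref{lem:doubleFT} is a formal computation independent of the particular Gelfand--Shilov class is the one point that deserves explicit mention when adapting the argument.
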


\end{document}